\newtheorem{theorem}[equation]{Theorem}
\newtheorem{proposition}[equation]{Proposition}
\newtheorem{lemma}[equation]{Lemma}
\newtheorem{corollary}[equation]{Corollary}
\newtheorem{definition}[equation]{Definition}
\newtheorem{example}[equation]{Example}
\newtheorem{observation}[equation]{Observation}
\newtheorem{remark}[equation]{Remark}
\theoremstyle{remark}
\newtheorem*{ack}{Acknowledgments}
\numberwithin{equation}{section}
\def\Z{\mathbb{Z}}
\def\Q{\mathbb{Q}}
\def\R{\mathbb{R}}
\def\F{\mathbb{F}}
\def\K{\mathbb{K}}
\def\cK{\mathcal{K}}
\def\CcK{C\mathcal{K}}
\def\HcK{H\mathcal{K}}
\def\EcKP{E\cK P}
\def\EcK{E\cK}
\def\ECcK{EC\cK}
\newcommand{\fff}[1]{\mathbb{F}_{#1}}
\newcommand{\res}[2]{\operatorname{Res}^{#1}_{#2}}
\newcommand{\raisebox{-5pt}{
\begingroup%
  \makeatletter%
  \providecommand\color[2][]{%
    \errmessage{(Inkscape) Color is used for the text in Inkscape, but the package 'color.sty' is not loaded}%
    \renewcommand\color[2][]{}%
  }%
  \providecommand\transparent[1]{%
    \errmessage{(Inkscape) Transparency is used (non-zero) for the text in Inkscape, but the package 'transparent.sty' is not loaded}%
    \renewcommand\transparent[1]{}%
  }%
  \providecommand\rotatebox[2]{#2}%
  \ifx\svgwidth\undefined%
    \setlength{\unitlength}{18.70443764bp}%
    \ifx\svgscale\undefined%
      \relax%
    \else%
      \setlength{\unitlength}{\unitlength * \real{\svgscale}}%
    \fi%
  \else%
    \setlength{\unitlength}{\svgwidth}%
  \fi%
  \global\let\svgwidth\undefined%
  \global\let\svgscale\undefined%
  \makeatother%
  \begin{picture}(1,0.91836368)%
    \put(0,0){\includegraphics[width=\unitlength,page=1]{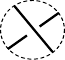}}%
  \end{picture}%
\endgroup%
}}{\raisebox{-5pt}{
\begingroup%
  \makeatletter%
  \providecommand\color[2][]{%
    \errmessage{(Inkscape) Color is used for the text in Inkscape, but the package 'color.sty' is not loaded}%
    \renewcommand\color[2][]{}%
  }%
  \providecommand\transparent[1]{%
    \errmessage{(Inkscape) Transparency is used (non-zero) for the text in Inkscape, but the package 'transparent.sty' is not loaded}%
    \renewcommand\transparent[1]{}%
  }%
  \providecommand\rotatebox[2]{#2}%
  \ifx\svgwidth\undefined%
    \setlength{\unitlength}{18.70443764bp}%
    \ifx\svgscale\undefined%
      \relax%
    \else%
      \setlength{\unitlength}{\unitlength * \real{\svgscale}}%
    \fi%
  \else%
    \setlength{\unitlength}{\svgwidth}%
  \fi%
  \global\let\svgwidth\undefined%
  \global\let\svgscale\undefined%
  \makeatother%
  \begin{picture}(1,0.91836368)%
    \put(0,0){\includegraphics[width=\unitlength,page=1]{crossing_nonoriented.pdf}}%
  \end{picture}%
\endgroup%
}}
\newcommand{\raisebox{-5pt}{
\begingroup%
  \makeatletter%
  \providecommand\color[2][]{%
    \errmessage{(Inkscape) Color is used for the text in Inkscape, but the package 'color.sty' is not loaded}%
    \renewcommand\color[2][]{}%
  }%
  \providecommand\transparent[1]{%
    \errmessage{(Inkscape) Transparency is used (non-zero) for the text in Inkscape, but the package 'transparent.sty' is not loaded}%
    \renewcommand\transparent[1]{}%
  }%
  \providecommand\rotatebox[2]{#2}%
  \ifx\svgwidth\undefined%
    \setlength{\unitlength}{18.70443764bp}%
    \ifx\svgscale\undefined%
      \relax%
    \else%
      \setlength{\unitlength}{\unitlength * \real{\svgscale}}%
    \fi%
  \else%
    \setlength{\unitlength}{\svgwidth}%
  \fi%
  \global\let\svgwidth\undefined%
  \global\let\svgscale\undefined%
  \makeatother%
  \begin{picture}(1,0.91836368)%
    \put(0,0){\includegraphics[width=\unitlength,page=1]{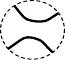}}%
  \end{picture}%
\endgroup%
}}{\raisebox{-5pt}{
\begingroup%
  \makeatletter%
  \providecommand\color[2][]{%
    \errmessage{(Inkscape) Color is used for the text in Inkscape, but the package 'color.sty' is not loaded}%
    \renewcommand\color[2][]{}%
  }%
  \providecommand\transparent[1]{%
    \errmessage{(Inkscape) Transparency is used (non-zero) for the text in Inkscape, but the package 'transparent.sty' is not loaded}%
    \renewcommand\transparent[1]{}%
  }%
  \providecommand\rotatebox[2]{#2}%
  \ifx\svgwidth\undefined%
    \setlength{\unitlength}{18.70443764bp}%
    \ifx\svgscale\undefined%
      \relax%
    \else%
      \setlength{\unitlength}{\unitlength * \real{\svgscale}}%
    \fi%
  \else%
    \setlength{\unitlength}{\svgwidth}%
  \fi%
  \global\let\svgwidth\undefined%
  \global\let\svgscale\undefined%
  \makeatother%
  \begin{picture}(1,0.91836368)%
    \put(0,0){\includegraphics[width=\unitlength,page=1]{0-smoothing.pdf}}%
  \end{picture}%
\endgroup%
}}
\newcommand{\raisebox{-5pt}{
\begingroup%
  \makeatletter%
  \providecommand\color[2][]{%
    \errmessage{(Inkscape) Color is used for the text in Inkscape, but the package 'color.sty' is not loaded}%
    \renewcommand\color[2][]{}%
  }%
  \providecommand\transparent[1]{%
    \errmessage{(Inkscape) Transparency is used (non-zero) for the text in Inkscape, but the package 'transparent.sty' is not loaded}%
    \renewcommand\transparent[1]{}%
  }%
  \providecommand\rotatebox[2]{#2}%
  \ifx\svgwidth\undefined%
    \setlength{\unitlength}{17.17747609bp}%
    \ifx\svgscale\undefined%
      \relax%
    \else%
      \setlength{\unitlength}{\unitlength * \real{\svgscale}}%
    \fi%
  \else%
    \setlength{\unitlength}{\svgwidth}%
  \fi%
  \global\let\svgwidth\undefined%
  \global\let\svgscale\undefined%
  \makeatother%
  \begin{picture}(1,1.08889324)%
    \put(0,0){\includegraphics[width=\unitlength,page=1]{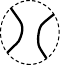}}%
  \end{picture}%
\endgroup%
}}{\raisebox{-5pt}{
\begingroup%
  \makeatletter%
  \providecommand\color[2][]{%
    \errmessage{(Inkscape) Color is used for the text in Inkscape, but the package 'color.sty' is not loaded}%
    \renewcommand\color[2][]{}%
  }%
  \providecommand\transparent[1]{%
    \errmessage{(Inkscape) Transparency is used (non-zero) for the text in Inkscape, but the package 'transparent.sty' is not loaded}%
    \renewcommand\transparent[1]{}%
  }%
  \providecommand\rotatebox[2]{#2}%
  \ifx\svgwidth\undefined%
    \setlength{\unitlength}{17.17747609bp}%
    \ifx\svgscale\undefined%
      \relax%
    \else%
      \setlength{\unitlength}{\unitlength * \real{\svgscale}}%
    \fi%
  \else%
    \setlength{\unitlength}{\svgwidth}%
  \fi%
  \global\let\svgwidth\undefined%
  \global\let\svgscale\undefined%
  \makeatother%
  \begin{picture}(1,1.08889324)%
    \put(0,0){\includegraphics[width=\unitlength,page=1]{1-smoothing.pdf}}%
  \end{picture}%
\endgroup%
}}
\newcommand{\raisebox{-5pt}{
\begingroup%
  \makeatletter%
  \providecommand\color[2][]{%
    \errmessage{(Inkscape) Color is used for the text in Inkscape, but the package 'color.sty' is not loaded}%
    \renewcommand\color[2][]{}%
  }%
  \providecommand\transparent[1]{%
    \errmessage{(Inkscape) Transparency is used (non-zero) for the text in Inkscape, but the package 'transparent.sty' is not loaded}%
    \renewcommand\transparent[1]{}%
  }%
  \providecommand\rotatebox[2]{#2}%
  \ifx\svgwidth\undefined%
    \setlength{\unitlength}{18.70443764bp}%
    \ifx\svgscale\undefined%
      \relax%
    \else%
      \setlength{\unitlength}{\unitlength * \real{\svgscale}}%
    \fi%
  \else%
    \setlength{\unitlength}{\svgwidth}%
  \fi%
  \global\let\svgwidth\undefined%
  \global\let\svgscale\undefined%
  \makeatother%
  \begin{picture}(1,0.91836368)%
    \put(0,0){\includegraphics[width=\unitlength,page=1]{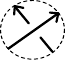}}%
  \end{picture}%
\endgroup%
}}{\raisebox{-5pt}{
\begingroup%
  \makeatletter%
  \providecommand\color[2][]{%
    \errmessage{(Inkscape) Color is used for the text in Inkscape, but the package 'color.sty' is not loaded}%
    \renewcommand\color[2][]{}%
  }%
  \providecommand\transparent[1]{%
    \errmessage{(Inkscape) Transparency is used (non-zero) for the text in Inkscape, but the package 'transparent.sty' is not loaded}%
    \renewcommand\transparent[1]{}%
  }%
  \providecommand\rotatebox[2]{#2}%
  \ifx\svgwidth\undefined%
    \setlength{\unitlength}{18.70443764bp}%
    \ifx\svgscale\undefined%
      \relax%
    \else%
      \setlength{\unitlength}{\unitlength * \real{\svgscale}}%
    \fi%
  \else%
    \setlength{\unitlength}{\svgwidth}%
  \fi%
  \global\let\svgwidth\undefined%
  \global\let\svgscale\undefined%
  \makeatother%
  \begin{picture}(1,0.91836368)%
    \put(0,0){\includegraphics[width=\unitlength,page=1]{crossing_pos.pdf}}%
  \end{picture}%
\endgroup%
}}
\newcommand{\raisebox{-5pt}{
\begingroup%
  \makeatletter%
  \providecommand\color[2][]{%
    \errmessage{(Inkscape) Color is used for the text in Inkscape, but the package 'color.sty' is not loaded}%
    \renewcommand\color[2][]{}%
  }%
  \providecommand\transparent[1]{%
    \errmessage{(Inkscape) Transparency is used (non-zero) for the text in Inkscape, but the package 'transparent.sty' is not loaded}%
    \renewcommand\transparent[1]{}%
  }%
  \providecommand\rotatebox[2]{#2}%
  \ifx\svgwidth\undefined%
    \setlength{\unitlength}{18.70443764bp}%
    \ifx\svgscale\undefined%
      \relax%
    \else%
      \setlength{\unitlength}{\unitlength * \real{\svgscale}}%
    \fi%
  \else%
    \setlength{\unitlength}{\svgwidth}%
  \fi%
  \global\let\svgwidth\undefined%
  \global\let\svgscale\undefined%
  \makeatother%
  \begin{picture}(1,0.91836368)%
    \put(0,0){\includegraphics[width=\unitlength,page=1]{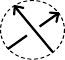}}%
  \end{picture}%
\endgroup%
}}{\raisebox{-5pt}{
\begingroup%
  \makeatletter%
  \providecommand\color[2][]{%
    \errmessage{(Inkscape) Color is used for the text in Inkscape, but the package 'color.sty' is not loaded}%
    \renewcommand\color[2][]{}%
  }%
  \providecommand\transparent[1]{%
    \errmessage{(Inkscape) Transparency is used (non-zero) for the text in Inkscape, but the package 'transparent.sty' is not loaded}%
    \renewcommand\transparent[1]{}%
  }%
  \providecommand\rotatebox[2]{#2}%
  \ifx\svgwidth\undefined%
    \setlength{\unitlength}{18.70443764bp}%
    \ifx\svgscale\undefined%
      \relax%
    \else%
      \setlength{\unitlength}{\unitlength * \real{\svgscale}}%
    \fi%
  \else%
    \setlength{\unitlength}{\svgwidth}%
  \fi%
  \global\let\svgwidth\undefined%
  \global\let\svgscale\undefined%
  \makeatother%
  \begin{picture}(1,0.91836368)%
    \put(0,0){\includegraphics[width=\unitlength,page=1]{crossing_neg.pdf}}%
  \end{picture}%
\endgroup%
}}
\newcommand{\raisebox{-5pt}{
\begingroup%
  \makeatletter%
  \providecommand\color[2][]{%
    \errmessage{(Inkscape) Color is used for the text in Inkscape, but the package 'color.sty' is not loaded}%
    \renewcommand\color[2][]{}%
  }%
  \providecommand\transparent[1]{%
    \errmessage{(Inkscape) Transparency is used (non-zero) for the text in Inkscape, but the package 'transparent.sty' is not loaded}%
    \renewcommand\transparent[1]{}%
  }%
  \providecommand\rotatebox[2]{#2}%
  \ifx\svgwidth\undefined%
    \setlength{\unitlength}{18.70443764bp}%
    \ifx\svgscale\undefined%
      \relax%
    \else%
      \setlength{\unitlength}{\unitlength * \real{\svgscale}}%
    \fi%
  \else%
    \setlength{\unitlength}{\svgwidth}%
  \fi%
  \global\let\svgwidth\undefined%
  \global\let\svgscale\undefined%
  \makeatother%
  \begin{picture}(1,0.91836368)%
    \put(0,0){\includegraphics[width=\unitlength,page=1]{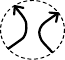}}%
  \end{picture}%
\endgroup%
}}{\raisebox{-5pt}{
\begingroup%
  \makeatletter%
  \providecommand\color[2][]{%
    \errmessage{(Inkscape) Color is used for the text in Inkscape, but the package 'color.sty' is not loaded}%
    \renewcommand\color[2][]{}%
  }%
  \providecommand\transparent[1]{%
    \errmessage{(Inkscape) Transparency is used (non-zero) for the text in Inkscape, but the package 'transparent.sty' is not loaded}%
    \renewcommand\transparent[1]{}%
  }%
  \providecommand\rotatebox[2]{#2}%
  \ifx\svgwidth\undefined%
    \setlength{\unitlength}{18.70443764bp}%
    \ifx\svgscale\undefined%
      \relax%
    \else%
      \setlength{\unitlength}{\unitlength * \real{\svgscale}}%
    \fi%
  \else%
    \setlength{\unitlength}{\svgwidth}%
  \fi%
  \global\let\svgwidth\undefined%
  \global\let\svgscale\undefined%
  \makeatother%
  \begin{picture}(1,0.91836368)%
    \put(0,0){\includegraphics[width=\unitlength,page=1]{orient_resolution.pdf}}%
  \end{picture}%
\endgroup%
}}
\newcommand{
\begingroup%
  \makeatletter%
  \providecommand\color[2][]{%
    \errmessage{(Inkscape) Color is used for the text in Inkscape, but the package 'color.sty' is not loaded}%
    \renewcommand\color[2][]{}%
  }%
  \providecommand\transparent[1]{%
    \errmessage{(Inkscape) Transparency is used (non-zero) for the text in Inkscape, but the package 'transparent.sty' is not loaded}%
    \renewcommand\transparent[1]{}%
  }%
  \providecommand\rotatebox[2]{#2}%
  \ifx\svgwidth\undefined%
    \setlength{\unitlength}{18.70443764bp}%
    \ifx\svgscale\undefined%
      \relax%
    \else%
      \setlength{\unitlength}{\unitlength * \real{\svgscale}}%
    \fi%
  \else%
    \setlength{\unitlength}{\svgwidth}%
  \fi%
  \global\let\svgwidth\undefined%
  \global\let\svgscale\undefined%
  \makeatother%
  \begin{picture}(1,0.91836368)%
    \put(0,0){\includegraphics[width=\unitlength,page=1]{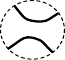}}%
  \end{picture}%
\endgroup%
}{
\begingroup%
  \makeatletter%
  \providecommand\color[2][]{%
    \errmessage{(Inkscape) Color is used for the text in Inkscape, but the package 'color.sty' is not loaded}%
    \renewcommand\color[2][]{}%
  }%
  \providecommand\transparent[1]{%
    \errmessage{(Inkscape) Transparency is used (non-zero) for the text in Inkscape, but the package 'transparent.sty' is not loaded}%
    \renewcommand\transparent[1]{}%
  }%
  \providecommand\rotatebox[2]{#2}%
  \ifx\svgwidth\undefined%
    \setlength{\unitlength}{18.70443764bp}%
    \ifx\svgscale\undefined%
      \relax%
    \else%
      \setlength{\unitlength}{\unitlength * \real{\svgscale}}%
    \fi%
  \else%
    \setlength{\unitlength}{\svgwidth}%
  \fi%
  \global\let\svgwidth\undefined%
  \global\let\svgscale\undefined%
  \makeatother%
  \begin{picture}(1,0.91836368)%
    \put(0,0){\includegraphics[width=\unitlength,page=1]{nonorient_resolution.pdf}}%
  \end{picture}%
\endgroup%
}
\newcommand{\posOrbit}{\raisebox{-5pt}{} \ldots \raisebox{-5pt}{}}
\newcommand{\negOrbit}{\raisebox{-5pt}{} \ldots \raisebox{-5pt}{}}
\newcommand{\orientResOrbit}{\raisebox{-5pt}{} \ldots \raisebox{-5pt}{}}
\newcommand{\ckh}{\operatorname{CKh}}
\newcommand{\eckh}{\operatorname{ECKh}}
\newcommand{\kh}{\operatorname{Kh}}
\newcommand{\ekh}{\operatorname{EKh}}
\newcommand{\ekhp}{\operatorname{EKhP}}
\newcommand{\khp}{\operatorname{KhP}}
\newcommand{\jones}{\operatorname{J}}
\newcommand{\lee}{\operatorname{Lee}}
\newcommand{\elee}{\operatorname{ELee}}
\def\Lee{\lee}
\def\ELee{\elee}
\def\CKh{\ckh}
\def\Kh{\kh}
\def\EKh{\ekh}
\def\EKhP{\ekhp}
\def\KhP{\khp}
\def\ECKh{\eckh}
\DeclareMathOperator{\Ext}{Ext}
\DeclareMathOperator{\Hom}{Hom}
\DeclareMathOperator{\im}{Im}
\DeclareMathOperator{\rk}{rk}
\DeclareMathOperator{\LeeP}{LeeP}
\DeclareMathOperator{\ELeeP}{ELeeP}
\DeclareMathOperator{\BN}{BN}
\DeclareMathOperator{\EBN}{EBN}
\DeclareMathOperator{\EBNP}{EBNP}
\DeclareMathOperator{\chr}{char}
\DeclareMathOperator{\Or}{Or}
\DeclareMathOperator{\Gr}{Gr}
\DeclareMathOperator{\EJ}{EJ}
\DeclareMathOperator{\DJones}{DJ}
\newcommand{\bnbracket}[1]{[\kern-1.5pt [ #1 ]\kern-1.5pt]_{\operatorname{BN}}}
\newcommand{\khbracket}[1]{[\kern-1.5pt [ #1 ]\kern-1.5pt]_{\operatorname{Kh}}}
\newcommand{\One}{\operatorname{1 \kern-3.75pt 1}}
\newcommand{\ol}[1]{\overline{#1}}
\newcommand{\wt}[1]{\widetilde{#1}}
\title{Khovanov homology and periodic links}
\author{Maciej Borodzik}
\address{Institute of Mathematics, Polish Academy of Science, ul. \'Sniadeckich 8,
00-656 Warsaw, Poland}
\address{Institute of Mathematics, University of Warsaw, ul. Banacha 2,
02-097 Warsaw, Poland}
\email{mcboro@mimuw.edu.pl}
\author{Wojciech Politarczyk}
\address{Institute of Mathematics, University of Warsaw, ul. Banacha 2,
02-097 Warsaw, Poland}
\email{wpolitarczyk@mimuw.edu.pl}
\date{\today}
\date{\today}
\subjclass[2010]{primary: 57M25. } 
\keywords{periodic links, Khovanov homology}
\begin{document}

\begin{abstract}
Based on the results of the second author, we define an equivariant version of Lee and Bar-Natan homology for periodic links and show that
there exists an equivariant spectral sequence from the equivariant Khovanov homology to equivariant Lee homology. As a result we obtain
new obstructions for a link to be periodic. These obstructions generalize previous results
of Przytycki and of the second author.
\end{abstract}
\maketitle

\section{Overview}

\subsection{Statement of results}
Let $L\subset\R^3$ be a link. Let $m\in\Z$, $m>1$. 
We say that $L$ is \emph{$m$-periodic} if $L$ is preserved by a rotational symmetry of $\R^{3}$ of order $m$ and $L$ is disjoint
from the fixed point set of the symmetry. A diagram $D$ of an $m$-periodic link $L$ is called $m$-periodic if $D$ is a diagram of $L$
and the action of $\Z_m$ on $\R^3$ that preserves $L$, descends to the rotation of $\R^2$ preserving $D$. We require $D$ to be disjoint from the center of the rotation of $\R^2$. Any $m$-periodic link admits an $m$-periodic diagram.

The question which links are periodic has attracted a lot of interest of knot theorists. One of the first results is due to Murasugi \cite{Murasugi},
who gave a criterion in terms of Alexander polynomials. The criterion, generalized for links by Sakuma \cite{Sakuma} is still one of the most effective criteria
for periodicity of links. Next cornerstone was a result of Sakuma \cite{Sakuma2}, who essentially solved the 
periodicity problem for composite knots and links. The main
focus is thus 
on periodicity of prime knots and links. Finally Weeks \cite{Weeks} found an algorithm for finding a canonical triangulation of a hyperbolic three-manifold
and used it, among other applications, to detect symmetries of hyperbolic three-manifolds, in particular, to detect symmetries of hyperbolic links. The criterion,
implemented in a computer package SnapPy \cite{SnapPy} is extremely effective, even if it is used in a conservative way (that is, we let SnapPy calculate the symmetry
group of a complement of the link $L$, and if this group does not have an element of order $m$, we conclude that $L$ cannot be $m$ periodic).

Even though SnapPy can obstruct periodicity of almost all knots, it cannot help with periodicity of non-hyperbolic knots and links. While 
the non-hyperbolic knots form only a very small portion of all knots (for example, if one looks at the amount of the non-hyperbolic knots among
the prime knots with bounded number of crossings) and the only prime non-hyperbolic
knots for which the periodicity question is not completely answered are cable knots, non-hyperbolic links occur much more often. Moreover, it is often
useful to have periodicity obstructions coming from different link invariants, because often this gives insight into the structure of the links invariants
for periodic links.

In fact, a lot of invariants have been used to obstruct periodicity of a given link:
Jones polynomial \cite{Traczyk}, HOMFLYPT polynomials \cite{Przytycki-periodic,Traczyk2}, homology of branched covers \cite{Naik},
twisted Alexander polynomials \cite{HillmanLivingstonNaik} and many others. The most recent approach
of Jabuka and Naik uses Heegaard--Floer homologies \cite{JabukaNaik}, there is also an obstruction for strong invertibility coming from Khovanov
homology of tangles given by Watson \cite{Watson}. 
Our paper follows the idea that knot homologies should give an effective tool for obstructing periodicity of knots.

In \cite{Politarczyk-Khovanov} 
the second author constructed an equivariant version of Khovanov homology. The equivariant Khovanov homology was used in \cite{Politarczyk-Jones}
to generalize the periodicity
obstruction of Traczyk \cite{Traczyk}. The two criteria in \cite{Politarczyk-Jones} and \cite{Przytycki-periodic} are stated in terms of Jones polynomials. 
In the present article we generalize these results to obtain a criterion that involves the Khovanov polynomial of a link. The main advantage is
that the Khovanov polynomial by the definition has non-negative coefficients, a feature that is lost when one passes to the Jones polynomial.
However, we must stress that the new criterion also shares a weakness of the periodicity criterion involving
the Jones polynomial: it cannot be used to obstruct periods $2$ and $3$; see Section~\ref{sec:period3}. We refer also to a recent preprint
of Zhang \cite{Zhang} for the use of Khovanov homology to obstruct 2--periodicity.

Throughout the paper $p$ and $r$ denote prime numbers. We work either over the field $\F=\Q$ or $\F=\F_{r}$ a finite field of order $r$. 
The number $s(K,\F)$ is the $s$-invariant of $K$ derived from the Lee or Bar-Natan theory, see \cite{Lee,BarNatan}.
We use the convention that the width of $\Kh(K;\F)$ is the maximum of $i-2j-(i'-2j')$ such that $\Kh^{i,j}(K;\F)$
and $\Kh^{i',j'}(K;\F)$ are non-empty.

The following result is the main theorem of the present paper. As the statement is rather technical, in this place we state it for knots. The general case of links is
given in Section~\ref{sec:links}. We also refer the reader to Section~\ref{sec:knotexample} for elaborating an example, which might be more enlightening than
the statement of Theorem~\ref{thm:periodicity_criterion} itself.

\begin{theorem}\label{thm:periodicity_criterion}
  Let $K$ be a $p^n$-periodic knot, where $p$ is an odd prime. Suppose that $\F = \Q$ or $\F_{r}$, for a prime $r$ such that $r \neq p$, and  $r$ has maximal order in the multiplicative group mod $p^{n}$. Set $c=1$ if $\F=\F_2$ and $c=2$ otherwise. Then 
the Khovanov polynomial $\KhP(K;\F)$ decomposes as
\begin{equation}\label{eq:presentation1}
\KhP(K;\F)=\mathcal{P}_0+\sum_{j=1}^n (p^j-p^{j-1})\mathcal{P}_j,
\end{equation}
where 
  \[\mathcal{P}_{0}, \mathcal{P}_{1}, \ldots, \mathcal{P}_{n} \in \Z[q^{\pm 1}, t^{\pm 1}],\]
  are Laurent polynomials such that
\begin{enumerate}
 \item $\mathcal{P}_{0} = q^{s(K,\F)}(q+q^{-1}) + \sum_{j=1}^\infty (1+tq^{2cj})\mathcal{S}_{0j}(t,q)$, and the polynomials $\mathcal{S}_{0j}$ have non-negative coefficients; \label{item3:main_thm_1}
 \item $\mathcal{P}_{k} = \sum_{j=1}^{\infty}(1+tq^{2cj})\mathcal{S}_{kj}(t,q)$ and the polynomials $\mathcal{S}_{kj}$ have non-negative coefficients for $1 \leq k \leq n$, \label{item4:main_thm_1}
 \item $\mathcal{P}_{k}(-1,q) - \mathcal{P}_{k+1}(-1,q) \equiv \mathcal{P}_{k}(-1,q^{-1}) - \mathcal{P}_{k+1}(-1,q^{-1})\pmod{q^{p^{n-k}} - q^{-p^{n-k}}}$; \label{item5:main_thm_1}
\item If the width of $\Kh(K;\F)$ is equal to $w$, then $S_{kj}=0$ for $j>\frac{c}{2}w$.\label{item6:main_thm_1}
\end{enumerate}
\end{theorem}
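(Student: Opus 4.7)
The overall strategy is to upgrade the Lee/Bar-Natan spectral sequence to the equivariant setting using the equivariant Khovanov homology $\EKh$ of the second author~\cite{Politarczyk-Khovanov}. Fix a $p^n$-periodic diagram $D$ of $K$; then $\CKh(D;\F)$ carries a natural $\Z/p^n$-action, and the arithmetic hypothesis on $\F$ is exactly the condition that the group algebra $\F[\Z/p^n]$ be semisimple with $n+1$ isomorphism classes of irreducible representations, of $\F$-dimensions $1, p-1, p^2-p, \ldots, p^n-p^{n-1}$, arising from the cyclotomic factorization of $x^{p^n}-1$ over $\F$. Letting $\mathcal{P}_k(t,q)$ denote the bigraded multiplicity of the $k$-th irreducible in $\EKh(K;\F)$, a dimension count then produces the decomposition~\eqref{eq:presentation1}.

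The next step is to construct an equivariant Bar-Natan (for $\F=\F_2$) or Lee (otherwise) perturbation on $\EKh$ by attaching the standard genus-one cobordism uniformly at each crossing. Because the perturbation is applied symmetrically, it is $\Z/p^n$-equivariant, and the resulting filtration spectral sequence splits along the isotypic decomposition. Its abutment is the equivariant Lee/Bar-Natan homology of $K$, which is two-dimensional, spanned by the canonical orientation classes $\mathfrak{s}_o,\mathfrak{s}_{\bar o}$; since $p$ is odd an order-$p^n$ rotation cannot reverse the intrinsic orientation of $K$, so both lie in the trivial component and contribute $q^{s(K,\F)}(q+q^{-1})$ to $\mathcal{P}_0$. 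Every remaining class of $\EKh$ must be cancelled on some page of the spectral sequence, the $r$-th page differential shifting the bigrading by $(1, 2cr)$; each cancelled pair in the $k$-th isotypic component then contributes one factor $(1+tq^{2cr})$ to $\mathcal{P}_k$ with non-negative multiplicity. This establishes items~\eqref{item3:main_thm_1} and~\eqref{item4:main_thm_1}.

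For item~\eqref{item5:main_thm_1} the plan is to exploit the subgroup tower of $\Z/p^n$: the fixed-point subspace under the order-$p^{n-k}$ subgroup $p^k\Z/p^n$ is the sum of the first $k+1$ isotypic components, with Poincaré polynomial $\mathcal{P}_0+\cdots+\mathcal{P}_k$. Via an equivariant transfer or Smith-type localization, the Euler characteristic (the $t=-1$ evaluation) of this fixed subspace should be related to the Jones polynomial of the $p^{n-k}$-periodic quotient link; the classical Przytycki/Traczyk periodicity congruence, combined with the mirror-symmetry substitution $q \leftrightarrow q^{-1}$, then propagates through successive differences to yield the stated congruence for $\mathcal{P}_k-\mathcal{P}_{k+1}$ modulo $q^{p^{n-k}}-q^{-p^{n-k}}$. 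This step is the main obstacle: fitting the $t=-1$ collapse of the equivariant spectral sequence together with the classical periodicity congruence and the mirror involution on each isotypic piece requires careful bookkeeping to pin down exactly which residues survive modulo the given cyclotomic factor. Finally, item~\eqref{item6:main_thm_1} is immediate from items~\eqref{item3:main_thm_1}--\eqref{item4:main_thm_1}: each factor $(1+tq^{2cj})$ produces a pair of classes differing by $|4cj-1|$ units in the $(i-2j)$ grading, so if $\mathcal{S}_{kj}$ were nonzero for $j$ exceeding the allowed range, the width of $\Kh(K;\F)$ would be forced to exceed $w$.
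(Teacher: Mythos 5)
Your proposal follows the paper's strategy closely for the decomposition~\eqref{eq:presentation1} and for items~\eqref{item3:main_thm_1}, \eqref{item4:main_thm_1}, \eqref{item6:main_thm_1}: decompose $\Lambda_{p^n}=\F[\Z_{p^n}]$ into irreducibles (here the maximal-order hypothesis on $r$ guarantees that each $\F(\xi_{p^s})$ remains irreducible over $\F_r$), set $\mathcal{P}_s$ to be the normalized $\EKhP(K;\F(\xi_{p^s}))$, run the equivariant Lee/Bar-Natan spectral sequence on each isotypic piece, and use that the equivariant Lee/Bar-Natan homology of a knot is two-dimensional and concentrated in the trivial isotypic component. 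One small correction: the canonical generators lie in the trivial component because the action of $\Z_{p^n}$ on the single component of $K$ is a rotation, hence orientation-preserving, for \emph{any} period; oddness of $p$ is irrelevant here. (It enters the theorem for entirely different reasons, explained in Section~\ref{sec:period3}.) Also note that the Poincar\'e polynomial of the fixed subspace under the order-$p^{n-k}$ subgroup is $\mathcal{P}_0+\sum_{s=1}^{k}(p^s-p^{s-1})\mathcal{P}_s$, not $\mathcal{P}_0+\cdots+\mathcal{P}_k$; you must carry the $\F$-dimensions of the irreducibles.

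The genuine gap is item~\eqref{item5:main_thm_1}. You sketch a route via ``equivariant transfer or Smith-type localization'' relating the Euler characteristic of the fixed subspace to the Jones polynomial of the quotient, and then hope that the classical Traczyk--Przytycki congruence propagates through successive differences; you explicitly flag this as ``the main obstacle'' and do not carry it out. The paper does not argue this way. It observes that by construction $\mathcal{P}_k(-1,q)-\mathcal{P}_{k+1}(-1,q)$ is exactly the \emph{difference Jones polynomial} $\DJones_k(K)$ of Section~\ref{sec:diffjones}, and then invokes the skein-relation congruences for $\DJones_k$ modulo $q^{p^{n-k}}-q^{-p^{n-k}}$ (Theorem~\ref{thm:skein relation} in characteristic $0$, Theorem~\ref{thm:skein-relation-finite-char} in characteristic $r$). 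The finite-characteristic version requires a separate analysis of the restriction functor $\res{\Z_{p^n}}{\Z_{p^s}}$ on the modules $V_\chi$ (Lemma~\ref{lemma:restrictions}); this is where the maximal-order hypothesis on $r$ is again essential, and none of it is visible in your outline. Without importing (or reproving) these difference-Jones congruences, item~\eqref{item5:main_thm_1} is not established.
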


Notice that for given Khovanov polynomial, checking whether there exists a presentation \eqref{eq:presentation1} 
 satisfying conditions (1)--(4) of Theorem~\ref{thm:periodicity_criterion} 
 involves a finite number of cases to check. This is because of positivity of Khovanov
polynomials and of polynomials $\mathcal{P}_k$.

The structure of the paper is the following.
Section~\ref{sec:equiv} defines equivariant Khovanov homology theories and defines the Lee and Bar-Natan spectral sequences in an equivariant setting.
At the end of this section, the equivariant Jones polynomial are reviewed. The proof of Theorem~\ref{thm:periodicity_criterion} is given in 
Section~\ref{sec:bigproof}. Then we pass to applications. In Section~\ref{sec:compare} we review other known periodicity criteria for knots.
In Section~\ref{sec:nonalternating} we show how effective is the new criterion in comparizon with various criteria for periodicity. In Section~\ref{sec:snappy}
we show that, for knots and not for links, Theorem~\ref{thm:periodicity_criterion} is beaten by SnapPy in almost all cases. Finally in Section~\ref{sec:period3}
we show a limitation of Theorem~\ref{thm:periodicity_criterion} for periods 2 and 3.

In Section~\ref{sec:links} we review the periodicity problem for links. First we review existing criteria for links, in a few cases it turns out that a criterion
works for links even though it is stated in an article having 'periodicity of knots' in the title. In Section~\ref{sec:period_links} we prove an analogue
of Theorem~\ref{thm:periodicity_criterion} for links.  In Section~\ref{sec:exlinks} we show that the new periodicity criterion is effective in some cases, when
the other criteria fail, including SnapPy.

 In Section~\ref{sec:period3} we explain
why the criterion does not work for periods 2 and 3. Then
we pass to concrete applications of Theorem~\ref{thm:periodicity_criterion}. The first application is the knot $15n135221$ for which we show how to apply
Theorem~\ref{thm:periodicity_criterion} in an efficient way. Although we do not prove this rigorously, the example indicates that there is an algorithm checking
whether given Khovanov polynomial satisfies the statement of Theorem~\ref{thm:periodicity_criterion} in time $O(n^{p/2})$, where $p$ is the period and $n$
is the total rank of Khovanov homology. Section~\ref{sec:compare} reviews several known periodicity obstruction and then Section~\ref{sec:nonalternating}
compares the criterion of Theorem~\ref{thm:periodicity_criterion} with other known
criteria, taking as a testing class non-alternating prime knots with 12 to 15 crossings and checking for knots with period 5. 
In particular, among others, the knot $15n135221$ is shown to pass Murasugi's criterion for the Alexander polynomial, Przytycki--Traczyk criterion for the HOMFLYPT
polynomial and the Naik's homological criterion, but fails to Theorem~\ref{thm:periodicity_criterion}.
Some basic facts
from representation theory used throughout the paper are gathered in Appendix~\ref{appendix}.

\begin{ack}  
The authors have profited a lot from conversations and e-mails with many people. We are especially grateful to Stefan Friedl,
Slaven Jabuka, Chuck Livingston, Allison Miller, Swatee Naik and Jozef Przytycki. We would like to thank Alexander Shumakovitch for sharing with us a file with Khovanov polynomials for knots with up to 15 crossings and to Paul Kirk for sharing with us a program to compute twisted Alexander polynomials. 
The KnotKit~\cite{KnotKit} was an invaluable help in checking examples.

The first author was supported by the National Science Center grant 2016/22/E/ST1/00040.
The second author was supported by the National Science Center grant 2016/20/S/ST1/00369.
\end{ack}

\subsection{Example. Knot $15n135221$}\label{sec:knotexample}

Theorem~\ref{thm:periodicity_criterion} can be applied directly if the Khovanov polynomial is known and
it always involves checking a finite number of cases. However if the Khovanov polynomial has large coefficients (like $10$) and has a large number of
coefficients, running through all the possibilities of presenting $\KhP$ as a sum of polynomials $(1+tq^{4j})S_{kj}(t,q)$ might be rather cumbersome. The number possibilities grows exponentially with the total rank of the Khovanov homology. 
Below we present an algorithm that significantly reduces the number of possibilities. It was implemented in \cite{KnotKit-WP}.

\begin{figure}
\definecolor{linkcolor0}{rgb}{0.15, 0.45, 0.45}
\begin{tikzpicture}[line width=2, line cap=round, line join=round, scale=0.7]
  \begin{scope}[color=linkcolor0]
    \draw (6.38, 8.61) .. controls (6.88, 8.61) and (7.40, 8.43) .. 
          (7.40, 8.00) .. controls (7.40, 7.50) and (6.76, 7.40) .. (6.18, 7.40);
    \draw (6.18, 7.40) .. controls (5.85, 7.40) and (5.51, 7.40) .. (5.17, 7.40);
    \draw (4.78, 7.40) .. controls (3.70, 7.40) and (2.55, 7.11) .. 
          (2.55, 6.18) .. controls (2.55, 5.52) and (3.10, 4.97) .. (3.76, 4.97);
    \draw (3.76, 4.97) .. controls (4.10, 4.97) and (4.44, 4.97) .. (4.78, 4.97);
    \draw (5.17, 4.97) .. controls (5.77, 4.97) and (6.18, 5.55) .. (6.18, 6.18);
    \draw (6.18, 6.18) .. controls (6.18, 6.52) and (6.18, 6.86) .. (6.18, 7.20);
    \draw (6.18, 7.59) .. controls (6.18, 7.93) and (6.18, 8.27) .. (6.18, 8.61);
    \draw (6.18, 8.61) .. controls (6.18, 9.41) and (7.09, 9.82) .. 
          (8.00, 9.82) .. controls (9.71, 9.82) and (9.82, 7.57) .. 
          (9.82, 5.58) .. controls (9.82, 3.68) and (9.82, 1.34) .. (8.61, 1.34);
    \draw (8.61, 1.34) .. controls (7.06, 1.34) and (5.51, 1.34) .. (3.96, 1.34);
    \draw (3.57, 1.34) .. controls (2.30, 1.34) and (1.34, 2.46) .. (1.34, 3.76);
    \draw (1.34, 3.76) .. controls (1.34, 5.31) and (1.34, 6.86) .. (1.34, 8.41);
    \draw (1.34, 8.80) .. controls (1.34, 9.57) and (2.28, 9.82) .. 
          (3.16, 9.82) .. controls (4.07, 9.82) and (4.97, 9.41) .. (4.97, 8.61);
    \draw (4.97, 8.61) .. controls (4.97, 8.20) and (4.97, 7.80) .. (4.97, 7.40);
    \draw (4.97, 7.40) .. controls (4.97, 7.06) and (4.97, 6.72) .. (4.97, 6.38);
    \draw (4.97, 5.99) .. controls (4.97, 5.65) and (4.97, 5.31) .. (4.97, 4.97);
    \draw (4.97, 4.97) .. controls (4.97, 4.57) and (4.97, 4.17) .. (4.97, 3.76);
    \draw (4.97, 3.76) .. controls (4.97, 3.10) and (5.52, 2.55) .. 
          (6.18, 2.55) .. controls (6.82, 2.55) and (7.40, 2.97) .. (7.40, 3.57);
    \draw (7.40, 3.96) .. controls (7.40, 5.01) and (7.29, 6.18) .. (6.38, 6.18);
    \draw (5.99, 6.18) .. controls (5.65, 6.18) and (5.31, 6.18) .. (4.97, 6.18);
    \draw (4.97, 6.18) .. controls (4.34, 6.18) and (3.76, 5.77) .. (3.76, 5.17);
    \draw (3.76, 4.78) .. controls (3.76, 4.50) and (3.76, 4.23) .. (3.76, 3.96);
    \draw (3.76, 3.57) .. controls (3.76, 2.82) and (3.76, 2.08) .. (3.76, 1.34);
    \draw (3.76, 1.34) .. controls (3.76, 0.35) and (5.03, 0.13) .. 
          (6.18, 0.13) .. controls (7.32, 0.13) and (8.61, 0.17) .. (8.61, 1.14);
    \draw (8.61, 1.54) .. controls (8.61, 2.62) and (8.32, 3.76) .. (7.40, 3.76);
    \draw (7.40, 3.76) .. controls (6.65, 3.76) and (5.91, 3.76) .. (5.17, 3.76);
    \draw (4.78, 3.76) .. controls (4.44, 3.76) and (4.10, 3.76) .. (3.76, 3.76);
    \draw (3.76, 3.76) .. controls (3.02, 3.76) and (2.28, 3.76) .. (1.54, 3.76);
    \draw (1.14, 3.76) .. controls (0.17, 3.76) and (0.13, 5.05) .. 
          (0.13, 6.18) .. controls (0.13, 7.34) and (0.35, 8.61) .. (1.34, 8.61);
    \draw (1.34, 8.61) .. controls (2.49, 8.61) and (3.63, 8.61) .. (4.78, 8.61);
    \draw (5.17, 8.61) .. controls (5.44, 8.61) and (5.72, 8.61) .. (5.99, 8.61);
  \end{scope}
\end{tikzpicture}
\caption{Knot $15n135221$.}\label{fig:15n135221}
\end{figure}
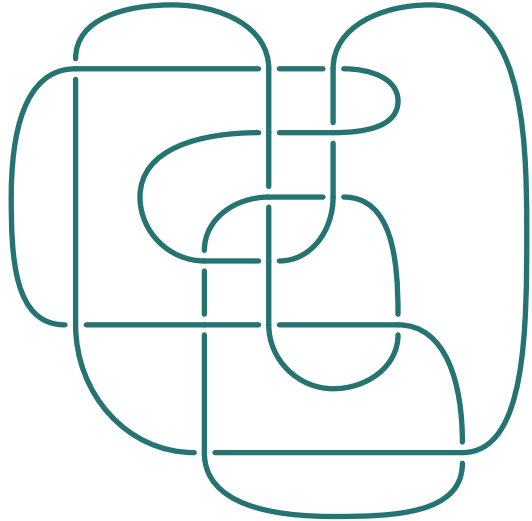
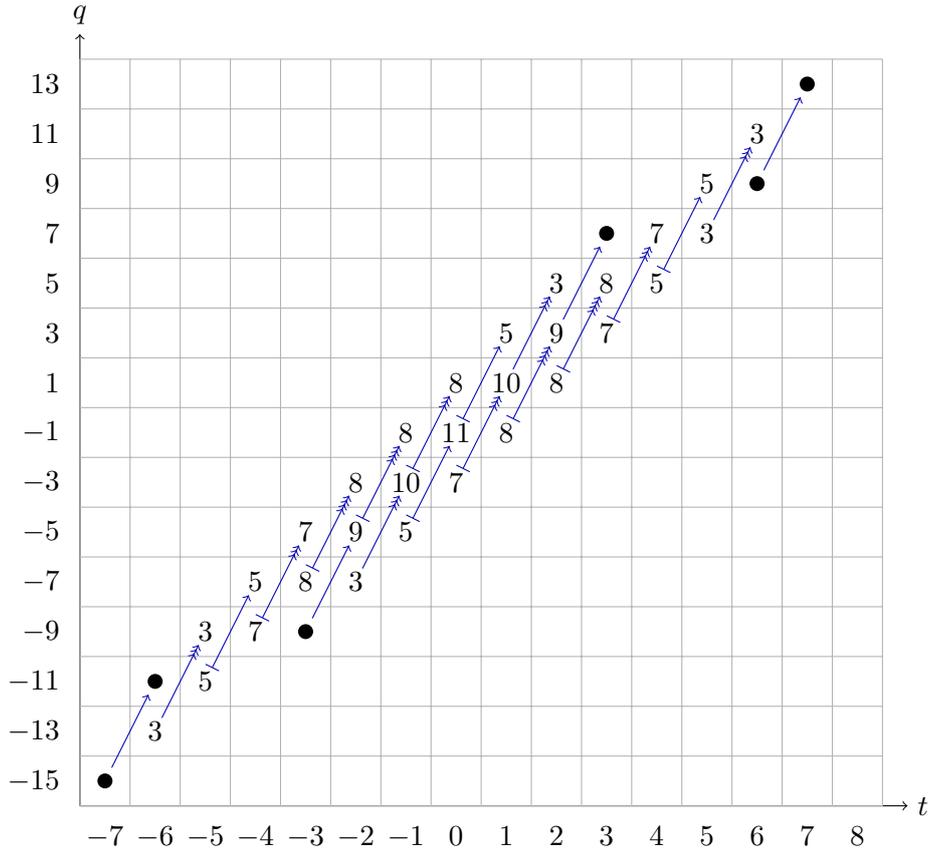
\begin{figure}
\begin{tikzpicture}[scale=.66]
  \draw[->] (0,0) -- (16.5,0) node[right] {$t$};
  \draw[->] (0,0) -- (0,15.5) node[above] {$q$};
  \draw[step=1,color=black!30] (0,0) grid (16,15);
  \draw (0.5,-.2) node[below] {$-7$};
  \draw (1.5,-.2) node[below] {$-6$};
  \draw (2.5,-.2) node[below] {$-5$};
  \draw (3.5,-.2) node[below] {$-4$};
  \draw (4.5,-.2) node[below] {$-3$};
  \draw (5.5,-.2) node[below] {$-2$};
  \draw (6.5,-.2) node[below] {$-1$};
  \draw (7.5,-.2) node[below] {$0$};
  \draw (8.5,-.2) node[below] {$1$};
  \draw (9.5,-.2) node[below] {$2$};
  \draw (10.5,-.2) node[below] {$3$};
  \draw (11.5,-.2) node[below] {$4$};
  \draw (12.5,-.2) node[below] {$5$};
  \draw (13.5,-.2) node[below] {$6$};
  \draw (14.5,-.2) node[below] {$7$};
  \draw (15.5,-.2) node[below] {$8$};
  \draw (-.2,0.5) node[left] {$-15$};
  \draw (-.2,1.5) node[left] {$-13$};
  \draw (-.2,2.5) node[left] {$-11$};
  \draw (-.2,3.5) node[left] {$-9$};
  \draw (-.2,4.5) node[left] {$-7$};
  \draw (-.2,5.5) node[left] {$-5$};
  \draw (-.2,6.5) node[left] {$-3$};
  \draw (-.2,7.5) node[left] {$-1$};
  \draw (-.2,8.5) node[left] {$1$};
  \draw (-.2,9.5) node[left] {$3$};
  \draw (-.2,10.5) node[left] {$5$};
  \draw (-.2,11.5) node[left] {$7$};
  \draw (-.2,12.5) node[left] {$9$};
  \draw (-.2,13.5) node[left] {$11$};
  \draw (-.2,14.5) node[left] {$13$};
  \fill (0.5, 0.5) circle (.15);
  \draw (1.5, 1.5) node {$3$};
  \fill (1.5, 2.5) circle (.15);
  \draw (2.5, 2.5) node {$5$};
  \draw (2.5, 3.5) node {$3$};
  \draw (3.5, 3.5) node {$7$};
  \draw (3.5, 4.5) node {$5$};
  \fill (4.5, 3.5) circle (.15);
  \draw (4.5, 4.5) node {$8$};
  \draw (4.5, 5.5) node {$7$};
  \draw (5.5, 4.5) node {$3$};
  \draw (5.5, 5.5) node {$9$};
  \draw (5.5, 6.5) node {$8$};
  \draw (6.5, 5.5) node {$5$};
  \draw (6.5, 6.5) node {$10$};
  \draw (6.5, 7.5) node {$8$};
  \draw (7.5, 6.5) node {$7$};
  \draw (7.5, 7.5) node {$11$};
  \draw (7.5, 8.5) node {$8$};
  \draw (8.5, 7.5) node {$8$};
  \draw (8.5, 8.5) node {$10$};
  \draw (8.5, 9.5) node {$5$};
  \draw (9.5, 8.5) node {$8$};
  \draw (9.5, 9.5) node {$9$};
  \draw (9.5, 10.5) node {$3$};
  \draw (10.5, 9.5) node {$7$};
  \draw (10.5, 10.5) node {$8$};
  \fill (10.5, 11.5) circle (.15);
  \draw (11.5, 10.5) node {$5$};
  \draw (11.5, 11.5) node {$7$};
  \draw (12.5, 11.5) node {$3$};
  \draw (12.5, 12.5) node {$5$};
  \fill (13.5, 12.5) circle (.15);
  \draw (13.5, 13.5) node {$3$};
  \fill (14.5, 14.5) circle (.15);
  \draw[color=blue!70!black,->] (0.634, 0.768) -- (1.366, 2.232);
  \draw[color=blue!70!black,->>>] (1.634, 1.768) --  (2.366, 3.232);
  \draw[color=blue!70!black,|->] (2.634, 2.768) --  (3.366, 4.232);
  \draw[color=blue!70!black,|->>>] (3.634, 3.768) --  (4.366, 5.232);
  \draw[color=blue!70!black,->] (4.634, 3.768) -- (5.366, 5.232);
  \draw[color=blue!70!black,|->>>>] (4.634, 4.768) --  (5.366, 6.232);
  \draw[color=blue!70!black,->>>] (5.634, 4.768) --  (6.366, 6.232);
  \draw[color=blue!70!black,|->>>>] (5.634, 5.768) --  (6.366, 7.232);
  \draw[color=blue!70!black,|->] (6.634, 5.768) --  (7.366, 7.232);
  \draw[color=blue!70!black,|->>>] (6.634, 6.768) --  (7.366, 8.232);
  \draw[color=blue!70!black,|->>>] (7.634, 6.768) --  (8.366, 8.232);
  \draw[color=blue!70!black,|->] (7.634, 7.768) --  (8.366, 9.232);
  \draw[color=blue!70!black,|->>>>] (8.634, 7.768) --  (9.366, 9.232);
  \draw[color=blue!70!black,->>>] (8.634, 8.768) --  (9.366, 10.232);
  \draw[color=blue!70!black,|->>>>] (9.634, 8.768) --  (10.366, 10.232);
  \draw[color=blue!70!black,->] (9.634, 9.768) -- (10.366, 11.232);
  \draw[color=blue!70!black,|->>>] (10.634, 9.768) -- (11.366, 11.232);
  \draw[color=blue!70!black,|->] (11.634, 10.768) --  (12.366, 12.232);
  \draw[color=blue!70!black,->>>] (12.634, 11.768) --  (13.366, 13.232);
  \draw[color=blue!70!black,->] (13.634, 12.768) -- (14.366, 14.232);
\end{tikzpicture}
\caption{The Khovanov homology of $15n135221$ over $\F_3$. The arrows denote differentials on the first page of the Lee spectral sequence. 
The number of tips indicates the rank of the coimage. For example
\raisebox{0.1cm}{\protect\tikz\protect\draw[->>] (0,0.0) -- (0.4,0.0);}
indicate the rank is $2$. The numbers $5$ or more are indicated with a bar at the beginning, so 
\raisebox{0.1cm}{\protect\tikz\protect\draw[|->] (0,0) -- (0.4,0);}
indicates rank $5$,
\raisebox{0.1cm}{\protect\tikz\protect\draw[|->>] (0,0) -- (0.4,0); }
indicates rank $6$ and so on.}\label{fig:15n135221_1}
\end{figure}
Consider the knot $15n135221$ depicted in Figure~\ref{fig:15n135221}. This knot has Alexander polynomial $1$ and passes the HOMFLYPT criterion for period $5$.
The Khovanov homology over $\F_3$ and the 
differential on the $E_1$ page of the Lee spectral sequence are depicted in Figure~\ref{fig:15n135221_1}.
The Lee spectral sequence degenerates at $E_2$ and the Rasmussen $s$-invariant is $0$.
This information allows us to write the Khovanov polynomial of $15n135221$ in the following form
\begin{align*}
  \KhP &= q + q^{-1} + (1 + tq^4)(t^{-7}q^{-15} + 3t^{-6}q^{-13} + t^{-5}q^{-11} + 3t^{-4}q^{-9} + t^{-3}q^{-9} + 3t^{-2}q^{-7}\\
       &+ t^{-1}q^{-5} + 3t^{-1}q^{-3} + q^{-3} + q^{-1} + 3tq + t^2q^3 + 3 t^3q^3 + t^4q^5 + 3t^5q^7 + t^6q^9 \\
       &+ 4 (t^{-5} q^{-11} + t^{-4} q^{-9} + 2t^{-3} q^{-7} + 2t^{-2} q^{-5} + t^{-1}q^{-5} + t^{-1} q^{-3} + 2t q^{-1} \\
       &+ q^{-3} + q^{-1} + 2 t^2q + t^3 q^3 + t^4 q^5)).
\end{align*}
The above decomposition gives us candidates for $\mathcal{S}_{01}$ and $\mathcal{S}_{11}$. Namely
\begin{align*}
  \mathcal{S}_{01}' &= t^{-7}q^{-15} + 3t^{-6}q^{-13} + t^{-5}q^{-11} + 3t^{-4}q^{-9} + t^{-3}q^{-9} + 3t^{-2}q^{-7}\\
                     &+ t^{-1}q^{-5} + 3t^{-1}q^{-3} + q^{-3} + q^{-1} + 3tq + t^2q^3 + 3 t^3q^3 + t^4q^5 + 3t^5q^7 + t^6q^9, \\
  \mathcal{S}_{11}' &= t^{-5} q^{-11} + t^{-4} q^{-9} + 2t^{-3} q^{-7} + 2t^{-2} q^{-5} + t^{-1}q^{-5} + t^{-1} q^{-3} + 2t q^{-1} \\
       &+ q^{-3} + q^{-1} + 2 t^2q + t^3 q^3 + t^4 q^5.
\end{align*}
According to Theorem~\ref{thm:periodicity_criterion}(3) we define
$\wt{\Xi}(q)=(q+q^{-1}+(1+tq^4)(\mathcal{S}_{01}'-\mathcal{S}_{11}')|_{t=-1}$ and set
$\Xi:=(\wt{\Xi}(q)-\wt{\Xi}(q^{-1}))\bmod{q^5-q^{-5}}$.
We have then
\begin{equation}
  \label{eq:difference-reduces}
  \Xi = -10 q + 5 q^3 - 5 q^7 + 10 q^9.
\end{equation}
Notice that $\Xi$ has all its coefficients divisible by $5$. This is correct, because otherwise the knot $15n135221$ would not pass the HOMFLYPT criterion.
Since $\Xi\neq 0$, the chosen candidates $\mathcal{S}_{01}'$ and $\mathcal{S}_{11}'$ for $\mathcal{S}_{01}$ and  $\mathcal{S}_{11}$ do not satisfy the conditions from Theorem~\ref{thm:periodicity_criterion} which means that we have to try to modify $\mathcal{S}_{01}'$ and $\mathcal{S}_{11}'$ in such a way that the resulting candidates do. 
Possible changes, that is, changes that preserve properties (1) and (2) of Theorem~\ref{thm:periodicity_criterion}, are 
\begin{equation}\label{eq:possible_changes}
\begin{split}
\mathcal{S}_{11}'&\mapsto \mathcal{S}_{11}'-\delta\\
\mathcal{S}_{01}'&\mapsto \mathcal{S}_{01}'+4\delta,
\end{split}
\end{equation}
where $\delta$ is a polynomial with non-negative coefficients such that $\mathcal{S}_{11}'-\delta$ has non-negative coefficients. The number
of possible choices of $\delta$ is finite, even if it is quite large: each term $t^iq^j$ entering $\mathcal{S}_{11}'$ with coefficient $c_{i,j}>0$ gives
a factor $(c_{i,j}+1)$ and the number of possible choices of $\delta$ is the product of all such factors. In this case we get $3^4\cdot 2^8=20736$
possibilities.

In order to reduce the number of possibilities we use the following argument. We calculate that if $\delta=a t^iq^j$ then after the change \eqref{eq:possible_changes} we have $\Xi\mapsto\Xi+aT_{ij}$, where 
\[T_{ij}=(-1)^i5(-q^{-j-4}+q^{-j}-q^{j}+q^{j+4})\bmod (q^5-q^{-5}).\]
Given that $T_{ij}$ is defined only modulo $q^5-q^{-5}$, the remainder of $T_{ij}$ is equal to $(-1)^iR_{j'}$, where $j'=j\bmod 10$. Moreover, since we work with knots, $j$ and $j'$ are always odd. Therefore,

\begin{align*}
  R_1 = R_5 &= 5 (q - q^9), \\
  R_3 &= 10 (q^3 - q^7), \\
  R_7 = R_9 &= 5 (-q - q^3 + q^7 + q^9).
\end{align*}

For each polynomial $\delta$ with non-negative coefficients, such that $S_{11}'-\delta$ has non-negative coefficients, $\Xi$ changes (as described in \eqref{eq:possible_changes}) by a linear combination of $R_1, R_3, R_7$. More precisely, different choices of $\delta$ change $\Xi$ by $-a_1 R_1 - a_3 R_3 - a_7 R_7$, where
the condtion that $\delta$ and $S_{11}'-\delta$ have non-negative coefficients translates into the following restrictions on $a_1$, $a_3$ and $a_7$.
$$a_1 \in \{-1, 0, 1, 2, 3, 4, 5, 6\}, \quad a_3 \in \{-3, -2, -1, 0\}, \quad a_7 \in \{-4, -3, -2, -1, 0, 1, 2\}.$$
This reduces the number of cases to check to $224$ if one uses brute force method. However, it can also be easily verified by hand that $\Xi$ cannot be written as a linear combination $a_1 R_1 + a_3 R_3 + a_7 R_7$ with the restrictions as above.

\begin{remark}
For general prime period $p$ the number of possible $R_i$ is equal to $\frac{p+1}{2}$. Each of the $a_i$ is definitely bounded by the total rank of Khovanov homology, hence the number of cases to check grows at most like $n^{p/2}$, where $n$ is the rank of the Khovanov polynomial. 
\end{remark}

\section{Equivariant Knovanov theories}\label{sec:equiv}

\subsection{Equivariant Khovanov-like homology}
\label{sec:equiv-khovanov-like-homology}
From now on we start gradually developing theory that will eventually lead to the proof of Theorem~\ref{thm:periodicity_criterion}.

We will recall the construction of the equivariant Khovanov homology. Our aim is to define also equivariant Bar-Natan and 
equivariant Lee homology. In order to conduct this construction in a uniform way, we will use the language of Frobenius systems
introduced in \cite{Khovanov}.

\begin{definition}
A \emph{rank two Frobenius system} $\mathcal{F} = (R, A, \epsilon, \Delta)$ consists of:
\begin{itemize}
\item a commutative ring $R$,
\item an $R$-algebra $A$ which is also a free $R$-module of rank $2$,
\item an $A$-bimodule homomorphism $\Delta \colon A \to A \otimes_{R} A$ that is cocommutative and coassociative,
\item an $R$-linear map $\epsilon \colon A \to R$ which is a counit for $\Delta$.
\end{itemize}
\end{definition}
Khovanov in \cite{Khovanov} constructed a universal rank two Frobenius system $\mathcal{F}$ with
\[R = \Z[h,t], \quad A = R[X]/(X^2 - hX - t), \quad \deg(h) = -2, \quad \deg(t) = -4,\]
and comultiplication and counit 
\begin{align*}
  &\epsilon(1) = 0, \quad \Delta(1) = 1 \otimes X + X \otimes 1 + h 1 \otimes 1, \\
  &\epsilon(X) = 1, \quad \Delta(X) = X \otimes X + t 1 \otimes 1.
\end{align*}
Universality means that any rank two Frobenius system $\mathcal{F}' = (R', A', \epsilon', \Delta')$, that can be used to produce a functorial (up to sign) link invariant, can be obtained from $\mathcal{F}$ by a base change, 
that is, for any such rank two Frobenius system $\mathcal{F}'=(R',A',\epsilon',\Delta')$ there exists a unital ring homomorphism
\[\psi \colon R \to R',\]
such that $A' = A \otimes_{R} R'$ and $\epsilon'$ and $\Delta'$ are induced counit and comultiplication.

One can use a Frobenius system to define a Khovanov-like
link homology. The underlying chain complex is constructed via a cube of resolution and the differential is a map induced by multiplication,
respectively comultiplication, in the algebra $A$. 
Khovanov \cite{Khovanov} proves that a Frobenius system $\mathcal{F}$ yields a homology theory which is an invariant of links and is functorial (up to sign) with respect to link cobordisms. Moreover, any Frobenius system obtained from $\mathcal{F}$ by a base change also yields a link invariant functorial (up to sign) with respect to link cobordisms.

\begin{example}\label{example:frob_system-khovanov-homology}
  Ordinary Khovanov homology (with coefficients in a field $\F$) can be obtained from a Frobenius system determined by the base change
  \[\psi \colon R \to \F, \quad \psi(h) = \psi(t) = 0.\]
  Since $\ker \psi$ is a homogeneous ideal the resulting homology theory is graded.
\end{example}

\begin{example}\label{example:frob_system-lee-homology}
  Lee homology \cite{Lee}, with coefficients in $\F = \Q$ or $\F_{p}$ for an odd prime $p$, can be obtained from the following Frobenius system
  \[\psi \colon R \to \F, \quad \psi(h) = 0, \quad \psi(t) = 1.\]
\end{example}

\begin{example}\label{example:frob_system-Bar-Natan-theory}
  Filtered Bar-Natan theory \cite{BarNatan} can be obtained from Frobenius system
  \[\psi \colon R \to \F_{2}, \quad \psi(h) = 1, \quad \psi(t) = 0.\]
\end{example}

\begin{definition}
  An assignment $D \mapsto \CcK^{\ast}(D;\F)$, which maps a link diagram $D$ to a chain complex over a field is called a Khovanov-like theory, if $\CcK^{\ast}(D;\F)$ is obtained from a Frobenius system $\mathcal{F}'$ with $R' = \F$ a field.
\end{definition}

Every Khovanov-like theory satisfies the following conditions.
\begin{itemize}
\item The underlying space of $\CcK^{\ast}(D;\F)$ is the same as of the Khovanov chain complex $\CKh^{\ast}(D;\F)$;
\item The differential $d_{\cK}$ preserves the homological grading of $\CKh^{\ast}(D;\F)$, and does not decrease the quantum grading. In
particular $d_{\cK}$ is a filtered map;
\item Every Reidemeister move that changes $D$ into $D'$ induces a filtered chain homotopy equivalence $\CcK^{\ast}(D;\F) \to \CcK^{\ast}(D';\F)$;
\end{itemize}

Suppose $D$ is an $m$-periodic diagram of an $m$-periodic link $L$
and $\CcK^{\ast}(D;\F)$ is a Khovanov-like theory. Let $\Lambda_m=\F[\Z_m]$.
\begin{theorem}\label{thm:invariance}
The action of $\Z_m$ on $D$ turns $\CcK^{\ast}(D;\F)$ into a chain complex over $\Lambda_m$. The groups
\[\EcK(L;M)=\Ext_{\Lambda_m}(M;\CcK^{\ast}(D;\F)).\]
are invariants of the equivariant isotopy class of a periodic link.
\end{theorem}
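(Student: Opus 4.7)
The plan is to verify both claims by tracking how the rotational $\Z_m$-action interacts with the standard construction of Khovanov-like chain complexes and their Reidemeister-invariance proofs.

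First, for the $\Lambda_m$-structure, I would note that the $m$-periodicity of $D$ forces the set of crossings of $D$ to be a disjoint union of free $\Z_m$-orbits. Consequently, the cube $\{0,1\}^n$ of resolutions carries a $\Z_m$-action by coordinate permutation, and for each vertex $v$ the rotational symmetry induces a bijection between the circles of the resolution at $v$ and the circles at $\sigma \cdot v$. This bijection yields an $\F$-linear isomorphism of the tensor products of copies of $A$ assigned to those vertices. Summing over all vertices gives an $\F[\Z_m]$-module structure on the underlying space of $\CcK^{\ast}(D;\F)$. The differential is a sum of edge maps, each built locally from the multiplication or comultiplication of the Frobenius algebra $A$ near one crossing; because the Frobenius data is uniform across crossings, the edge maps at $v \to v'$ and at $\sigma \cdot v \to \sigma \cdot v'$ are intertwined by the $\Z_m$-action, so $d_{\cK}$ is $\Lambda_m$-linear.

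Second, for invariance, I would use the fact (established in the equivariant setting in~\cite{Politarczyk-Khovanov}) that any two $m$-periodic diagrams of equivariantly isotopic $m$-periodic links are related by a finite sequence of equivariant Reidemeister moves, where an equivariant move consists of $m$ ordinary Reidemeister moves performed simultaneously at all crossings of a single $\Z_m$-orbit. The third bulleted property of a Khovanov-like theory provides, for each ordinary Reidemeister move, an explicit local filtered chain homotopy equivalence whose defining formulas involve only the circles meeting the crossings of the move and act as the identity elsewhere. Performing these local constructions simultaneously at every crossing of a given orbit produces a chain map and a chain homotopy between $\CcK^{\ast}(D;\F)$ and $\CcK^{\ast}(D';\F)$; by uniformity of the local formulas across the orbit, both the chain map and the homotopy commute with the $\Z_m$-action, hence are $\Lambda_m$-equivariant.

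Finally, since $\Ext_{\Lambda_m}(M;-)$ is a functor from the homotopy category of chain complexes of $\Lambda_m$-modules to graded abelian groups, it sends $\Lambda_m$-equivariant chain homotopy equivalences to isomorphisms. Applying this to the chain homotopy equivalences produced in the previous step yields the claimed invariance of $\EcK(L;M)$ under equivariant isotopy.

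The main obstacle will be pinning down precisely why the chain homotopy equivalences induced by Reidemeister moves can be performed simultaneously on an entire orbit and remain $\Z_m$-equivariant. This reduces to checking that the standard formulas (e.g.\ the ones recorded in~\cite{BarNatan,Khovanov} for the universal Frobenius system) are supported near the crossings involved, are the identity on the remaining tensor factors, and depend only on the local combinatorics of the move; once that is in place, the equivariance is essentially automatic because the local ingredients at the $m$ different orbit points are literally the same maps.
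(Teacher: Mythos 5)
Your overall architecture matches the paper's: the $\Lambda_m$-module structure comes from the free $\Z_m$-action on the cube of resolutions, invariance is reduced to showing that equivariant Reidemeister moves induce $\Lambda_m$-chain homotopy equivalences, and one finishes because $\Ext_{\Lambda_m}(M;-)$ factors through the homotopy category. The paper delegates the middle step to \cite[Theorem~3.14]{Politarczyk-Khovanov}, whereas you sketch it, and that sketch has a real gap. You are right that the \emph{chain map} obtained by performing the local Reidemeister equivalence at each crossing of an orbit is manifestly $\Z_m$-equivariant, because the local maps at disjoint crossings commute with one another and are permuted by $\Z_m$. But the \emph{homotopy} witnessing that the composite (equivalently, the $m$-fold ``tensor power'' of the local equivalence) is an equivalence is \emph{not} ``essentially automatic.'' Writing $\Psi = \phi\psi$ for the local roundtrip, the standard homotopy for $\Psi^{\otimes m}\simeq\operatorname{id}$ is the telescoping sum $\sum_{k=0}^{m-1}\Psi^{\otimes k}\otimes H\otimes\operatorname{id}^{\otimes(m-k-1)}$, which is not invariant under cyclic permutation of the factors; the obvious fix, cyclic symmetrization, divides by $m$ and therefore fails over $\F$ precisely when $\operatorname{char}\F \mid m$ — and the theorem does \emph{not} assume $m$ invertible in $\F$.

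The correct way to close this gap (which is also why the paper's cited lemma holds) uses the freeness you have already established: because the crossings lie in free orbits, each chain group of $\CcK^{\ast}(D;\F)$ is a \emph{free} $\Lambda_m$-module. The Gaussian elimination/delooping producing the Reidemeister equivalence gives $\Lambda_m$-equivariant maps $f,g$ with $gf=\operatorname{id}$ on the nose, so $e = \operatorname{id}-fg$ is an equivariant idempotent and $\CcK^{\ast}(D')\cong \CcK^{\ast}(D)\oplus\operatorname{Im}(e)$ as $\Lambda_m$-complexes, with $\operatorname{Im}(e)$ acyclic and a direct summand of a free module, hence a bounded acyclic complex of projective $\Lambda_m$-modules. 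Such a complex is contractible over $\Lambda_m$, so $f$ \emph{is} a $\Lambda_m$-chain homotopy equivalence — but via a contraction supplied by projectivity, not by the locality heuristic you invoke. (Alternatively, one could weaken the claim: $f$ is an equivariant quasi-isomorphism of bounded complexes, and $\Ext_{\Lambda_m}(M;-)$ computed via a bounded-above projective resolution of $M$ sends such quasi-isomorphisms to isomorphisms, by the hypercohomology spectral sequence.) You flagged this step as ``the main obstacle,'' which was the right instinct; the proposed one-line resolution, however, does not withstand scrutiny.
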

\begin{proof}
The proof is essentially a repetition of proof of \cite[Theorem 4.3]{Politarczyk-Khovanov}. The fact that $\CcK^{\ast}(D;\K)$ has
a structure of a chain complex over $\Lambda_m$ follows from a simple observation that $\Z_m$ acts on the cube of resolution and commutes with the differential.

Next we use \cite[Theorem 3.14]{Politarczyk-Khovanov} to show that if $D$ and $D'$ differ by an
equivariant Reidemeister move, then there exists a chain homotopy equivalences $\CcK^{\ast}(D;\F) \to \CcK^{\ast}(D';\F)$ of complexes of $\Lambda_m$
modules. Finally we apply a result from homological algebra, as in \cite[Proposition 2.15]{Politarczyk-Khovanov} to see that $\EcK(L;M)$ is invariant.
\end{proof}

In this way, taking as a starting point original Khovanov theory, Lee theory,  or the Bar-Natan theory, see Examples~\ref{example:frob_system-khovanov-homology}, \ref{example:frob_system-lee-homology} and \ref{example:frob_system-Bar-Natan-theory} above, 
we can define the equivariant Khovanov homology $\EKh(L;M)$, the equivariant Lee homology $\ELee(L;M)$, the equivariant
Bar-Natan homology $\EBN(L;M)$, respectively.

It is convenient to present $\EcK(L;M)$ as a homology of a certain chain complex. To this end, 
choose a projective resolution of $M$ over $\Lambda_m$:
\begin{center}
\begin{tikzpicture}
\matrix (m) [matrix of math nodes, column sep=3em, row sep=10em, minimum width=2em,nodes={minimum height=1em, anchor=center}]
{
\ldots & P_3 & P_2 & P_1 & M \\};
\path [-stealth]
(m-1-1) edge node [above] {$\delta_3$} (m-1-2)
(m-1-2) edge node [above] {$\delta_2$} (m-1-3)
(m-1-3) edge node [above] {$\delta_1$} (m-1-4)
(m-1-4) edge node [above] {$\delta_0$} (m-1-5);
\end{tikzpicture}
\end{center}

Then $\EcK(L;M)$ is the homology of the following complex
\begin{equation}\label{eq:chaincomplex}
\ECcK^{i}(L;M)=\bigoplus_{a+b=i} \Hom(P_a,\CcK^{b}(D)),
\end{equation}
with the differential $d_{\cK}+\delta$. The chain complex on the right hand side of \eqref{eq:chaincomplex} is a filtered complex with respect to the
quantum grading. Moreover, if $d_\cK$ actually preserves the quantum grading, then $\ECcK$ is a graded chain complex as well.

From the construction of $\ECcK$ we obtain the following result.
\begin{proposition}\label{prop:cartan}
There exists a Cartan-Eilenberg spectral sequence, whose $E_2$ page is given by
\begin{equation}\label{eq:cartankhovanov}
E_2^{i,k}=\Ext_{\Lambda_{m}}^i(M,\HcK^{k}(L;\F)),
\end{equation}
where $\HcK$ is the homology of $\CcK$,
and which converges to $\EcK^{i+k}(L;M)$.

Furthermore, if $m$ is invertible in $\F$, then the spectral sequence degenerates at the $E_2$ page and we have $\EcK^i(L;M)=\Hom_{\Lambda_m}(M,\HcK^{i}(D;\F))$.
\end{proposition}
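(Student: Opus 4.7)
The plan is to view the chain complex $\ECcK^\ast(L;M)$ defined in \eqref{eq:chaincomplex} as the total complex of a double complex
\[C^{a,b} = \Hom_{\Lambda_m}\bigl(P_a, \CcK^b(D;\F)\bigr),\]
with vertical differential induced by the Khovanov-like differential $d_\cK$ and horizontal differential induced by the resolution differential $\delta$. Up to the usual sign convention the total differential is $d_\cK \pm \delta^*$, which agrees with the one specified in \eqref{eq:chaincomplex}. Filtering by the column degree $a$ produces a spectral sequence whose $E_0$ page is $C^{\ast,\ast}$ with vertical differential $d_\cK$. Because each $P_a$ is projective over $\Lambda_m$, the functor $\Hom_{\Lambda_m}(P_a,-)$ is exact, so passing to vertical cohomology yields
\[E_1^{a,b} = \Hom_{\Lambda_m}\bigl(P_a, \HcK^b(L;\F)\bigr),\]
and the induced horizontal differential is exactly the one that computes $\Ext^\ast_{\Lambda_m}\bigl(M, \HcK^b(L;\F)\bigr)$ from the chosen resolution $P_\ast \to M$. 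Hence
\[E_2^{a,b} = \Ext^a_{\Lambda_m}\bigl(M, \HcK^b(L;\F)\bigr),\]
as required by \eqref{eq:cartankhovanov}. Since $\CcK^b(D;\F) = 0$ outside a finite range of $b$, the induced filtration on each total degree of $\ECcK^\ast(L;M)$ is bounded, so the spectral sequence converges to $\EcK^{a+b}(L;M)$.

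For the second claim, assume $m$ is invertible in $\F$. By Maschke's theorem the group algebra $\Lambda_m = \F[\Z_m]$ is semisimple, so every $\Lambda_m$-module is projective. Consequently $\Ext^a_{\Lambda_m}(M,N) = 0$ for all $a \geq 1$ and every $\Lambda_m$-module $N$. The $E_2$ page is therefore concentrated in the single row $a=0$, the spectral sequence collapses at $E_2$, and the edge map yields
\[\EcK^i(L;M) = E_2^{0,i} = \Hom_{\Lambda_m}\bigl(M, \HcK^i(L;\F)\bigr).\]

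The only technical point that requires care is fixing sign conventions on the total differential so that the identification with \eqref{eq:chaincomplex} is literally correct, and checking that the horizontal differential on $E_1$ is indeed the one used to define $\Ext^\ast_{\Lambda_m}(M,-)$; once this is done, the result is a direct application of the spectral sequence of a half-plane double complex, with bounded filtration in each total degree ensuring convergence.
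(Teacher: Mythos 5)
Your proof is correct and is essentially the argument the paper has in mind: the paper's one-line proof defers the first claim to Weibel (which is precisely the double-complex spectral sequence you spell out, filtering $C^{a,b}=\Hom_{\Lambda_m}(P_a,\CcK^b)$ by column degree and using exactness of $\Hom_{\Lambda_m}(P_a,-)$ to identify $E_1$ and $E_2$), and defers the second claim to Corollary~\ref{cor:extvanish}, which is exactly the Maschke/semisimplicity argument you give. Convergence via boundedness of $\CcK^\ast$ in the $b$-direction is the right justification as well.
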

\begin{proof}
The first part follows from elementary homological algebra, see \cite{Weibel}. The second part follows from Corollary~\ref{cor:extvanish}.
\end{proof}
\begin{corollary}\label{cor:trivial_on_scalars}
If $m$ is invertible in $\F$, then $\EcK^i(L;\Lambda_m)=\HcK^i(L;\F)$.
\end{corollary}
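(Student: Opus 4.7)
The plan is to obtain this corollary as a direct application of the second part of Proposition~\ref{prop:cartan} applied to the module $M=\Lambda_m$, combined with the elementary fact that a ring is a free module of rank one over itself.

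In more detail, I would first observe that since $m$ is invertible in $\F$, Proposition~\ref{prop:cartan} applies and gives
\[
\EcK^i(L;\Lambda_m)\;=\;\Hom_{\Lambda_m}\!\bigl(\Lambda_m,\HcK^i(D;\F)\bigr).
\]
Next, I would invoke the standard adjunction/evaluation isomorphism for modules over any ring $R$, namely $\Hom_R(R,N)\xrightarrow{\sim} N$, $\phi\mapsto\phi(1)$, applied with $R=\Lambda_m$ and $N=\HcK^i(D;\F)$ equipped with its $\Z_m$-action inherited from the periodic diagram $D$. This gives an isomorphism of $\Lambda_m$-modules
\[
\Hom_{\Lambda_m}\!\bigl(\Lambda_m,\HcK^i(D;\F)\bigr)\;\cong\;\HcK^i(D;\F).
\]
Since $\HcK^i(D;\F)$ is a link invariant (as $\CcK$ comes from a Khovanov-like theory), the right-hand side depends only on $L$, and we may write it as $\HcK^i(L;\F)$.

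Chaining the two isomorphisms yields the desired identification $\EcK^i(L;\Lambda_m)=\HcK^i(L;\F)$. There is no serious obstacle here; the content is entirely packaged in Proposition~\ref{prop:cartan}, and the only additional ingredient is the triviality $\Hom_R(R,-)=\mathrm{id}$. One small point worth noting in the write-up is that the isomorphism is compatible with the quantum filtration (and with the quantum grading when $d_\cK$ preserves it), since the evaluation map $\phi\mapsto\phi(1)$ is filtered of degree zero.
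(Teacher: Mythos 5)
Your proof is correct and is exactly the argument the paper intends (the corollary is stated without a separate proof precisely because it follows from Proposition~\ref{prop:cartan} with $M=\Lambda_m$ together with the identification $\Hom_{\Lambda_m}(\Lambda_m,N)\cong N$). Your remark about compatibility with the quantum filtration is a sensible bonus, but no further detail is needed.
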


The quantum filtration of the chain complex $\ECcK^i(L;M)$ descends to a filtration of the homology $\EcK$. For an integer $k$ let $\Gr^k\EcK^i$ be the graded part.
Define the following polynomial
\begin{equation}\label{eq:ekhp}
\EcKP(L;M)=\sum_{i,j}t^iq^k\dim_\F \Gr^k\EcK^i(L;M).
\end{equation}
This polynomial is called the equivariant polynomial of the Khovanov-like theory $\cK$. In particular we recover the equivariant Khovanov polynomial $\EKhP(L;M)$, the
equivariant Lee polynomial $\ELeeP(L;M)$ and the equivariant Bar--Natan polynomial $\EBNP(L;M)$.

The equivariant Khovanov polynomial  $\EcKP$ has always non-negative coefficients. 
In some cases, that is, for some concrete modules $M$, we can obtain further restrictions on the coefficients of $\EcKP$. In the following
result we use the notation of Appendix~\ref{appendix}.

\begin{proposition}\label{prop:divisib}
Suppose the period $m$ of the link $L$ is invertible in $\F$.
\begin{itemize}
\item[(a)] If $\F=\Q$ and $M=\Q(\xi_d)$ for some $d \mid m$, then all the coefficients of $\EcKP(L;M)$ are divisible by $\varphi(d)=\dim_{\Q}\Q(\xi_d)$.
\item[(b)] If $\F=\F_r$ and $M=V_\chi$ for some fixed $\chi\in C(m,r)$, then all the coefficients of $\EcKP(L;M)$ are divisible
by $\dim_{\F} V_\chi$.
\end{itemize}
\end{proposition}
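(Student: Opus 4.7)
The plan is to use Corollary~\ref{cor:trivial_on_scalars} (or rather its input, Proposition~\ref{prop:cartan}) together with the semisimplicity of $\Lambda_m$ and Schur's lemma in a graded form. Since $m$ is invertible in $\F$, Maschke's theorem says $\Lambda_m$ is semisimple, and the Cartan--Eilenberg spectral sequence collapses, giving
\[
\EcK^{i}(L;M)\;\cong\;\Hom_{\Lambda_m}\bigl(M,\HcK^{i}(L;\F)\bigr).
\]
Because the $\Z_m$-action preserves the quantum grading on $\CcK^{\ast}(D;\F)$, the induced action on $\HcK^{\ast}(L;\F)$ preserves the quantum filtration, and the associated graded $\Gr^{k}\HcK^{i}(L;\F)$ is a $\Lambda_m$-module. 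A tracing through the construction of $\ECcK$ in \eqref{eq:chaincomplex} shows that the isomorphism above is compatible with the filtrations, so on the level of graded pieces
\[
\Gr^{k}\EcK^{i}(L;M)\;\cong\;\Hom_{\Lambda_m}\bigl(M,\Gr^{k}\HcK^{i}(L;\F)\bigr).
\]

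First I would decompose each $\Gr^{k}\HcK^{i}(L;\F)$ into simple $\Lambda_m$-modules, which is possible by Maschke. For part (a) the simple $\Q[\Z_m]$-modules are the cyclotomic fields $\Q(\xi_{d})$ for $d\mid m$ (this is recalled in the appendix). Schur's lemma gives
\[
\Hom_{\Q[\Z_m]}\bigl(\Q(\xi_{d}),\Q(\xi_{d'})\bigr)=\begin{cases} \Q(\xi_{d}),& d=d',\\ 0,& d\neq d',\end{cases}
\]
so if $\Gr^{k}\HcK^{i}(L;\Q)\cong\bigoplus_{d\mid m}\Q(\xi_{d})^{\,n_{d,i,k}}$, then
\[
\Gr^{k}\EcK^{i}(L;\Q(\xi_{d}))\cong\Q(\xi_{d})^{\,n_{d,i,k}},
\]
whose $\Q$-dimension is $\varphi(d)\,n_{d,i,k}$. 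Hence every coefficient of $\EcKP(L;\Q(\xi_d))$ is divisible by $\varphi(d)$.

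For part (b) the argument is formally the same: since $\Lambda_m=\F_r[\Z_m]$ is a commutative finite-dimensional semisimple $\F_r$-algebra, Wedderburn shows it is a product of finite field extensions of $\F_r$, and the simple modules $V_\chi$ (indexed by $\chi\in C(m,r)$, see Appendix~\ref{appendix}) are precisely these fields. Again by Schur, $\End_{\Lambda_m}(V_\chi)=V_\chi$ and $\Hom_{\Lambda_m}(V_\chi,V_{\chi'})=0$ for $\chi\neq\chi'$; decomposing $\Gr^{k}\HcK^{i}(L;\F_r)$ into isotypic components and applying $\Hom_{\Lambda_m}(V_\chi,-)$ yields a sum of copies of $V_\chi$, so its $\F_r$-dimension is an integer multiple of $\dim_{\F_r}V_\chi$.

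The only delicate point worth checking carefully is the compatibility of the quantum filtration with the degeneration of the Cartan--Eilenberg spectral sequence at $E_2$: the key is that $\Z_m$ preserves the quantum grading on each chain group, so the averaging argument underlying Corollary~\ref{cor:extvanish} can be performed grade-by-grade, which is what lets us pass from an isomorphism of abstract $\F$-vector spaces to an isomorphism of the graded pieces. Everything else is a direct application of Maschke plus Schur, and no further structural input about Khovanov-like theories is needed.
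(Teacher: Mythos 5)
Your proof is correct and amounts to the same argument as the paper's: Maschke's theorem and the collapse of the Cartan--Eilenberg spectral sequence reduce the claim to the fact that $\Gr^k\EcK^i(L;M)$ is a vector space over the field $M$, hence its $\F$-dimension is a multiple of $\dim_\F M$. The paper reaches this directly by noting that $\Hom_{\Lambda_m}(M,-)$ is valued in $M$-vector spaces and that the filtration is by $M$-submodules, whereas you arrive at the same conclusion slightly more explicitly by decomposing $\Gr^k\HcK^i(L;\F)$ into simples and invoking Schur's lemma; this is a cosmetic rather than substantive difference.
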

\begin{proof}
By Corollary~\ref{cor:extvanish}
the Cartan--Eilenberg spectral sequence \eqref{eq:cartankhovanov} degenerates at the $E_2$ page and we obtain
\[\EcK^{i}(L;M)=\Hom_{\Lambda_m}(M;\HcK^{i}(L;\F)).\]
Notice that in both cases (a) and (b) $M$ is a field and $\EcK^{i}(L;M)$ is an $M$-vector space. Likewise, the quotients of the filtration 
are also $M$-vector spaces. Therefore
\[\dim_{\F} \Gr^{k}\EcK^{i}(L;M) = \dim_{\F} M \cdot \dim_{M} \Gr^{k}\EcK^{i}(L;M)\]
which finishes the proof.
\end{proof}

\subsection{Properties of the equivariant Lee and Bar-Natan homology}\label{sec:lee}
Fix an orientation $\mathcal{O}_0$ of a link $L$. Denote by $\Or(L)$ the set of all orientations of $L$ and for $\mathcal{O} \in \Or(L)$ let $lk(\mathcal{O})$ denote the total linking number of the underlying oriented link. Define, for $i \in \Z$,
$$\Or_{i}(L) = \{\mathcal{O} \in \Or(L) \colon lk(\mathcal{O}) - lk(\mathcal{O}_0) = i/2\}.$$

The classical (non-equivariant) Lee and Bar-Natan homology of a link can be explicitly calculated in the following way.
\begin{theorem}[\cite{Lee,Rasmussen,Turner}]\label{thm:lee-bar-natan-homology}
  Let $L$ be a link. Suppose that $L$ consists of $k$ components, then $\dim_{\F}\Lee^{\ast}(L;\F) = \dim_{\fff{2}}\BN^{\ast}(L) = 2^{k}$. Moreover, there exists a canonical basis
  $$\{x_{\mathcal{O}} \colon \mathcal{O} \in \Or(L)\}$$
  of $\Lee^{\ast}(L,\F)$ and $\BN^{\ast}(L,\F_2)$ so that
  \begin{align*}
    \Lee^i(L,\F) &= span_{\F}\{x_{\mathcal{O}} \colon \mathcal{O} \in \Or_{i}(L)\},\\
    \BN^i(L,\F_2) &= span_{\F_2}\{x_{\mathcal{O}} \colon \mathcal{O} \in \Or_{i}(L)\}.
  \end{align*}
\end{theorem}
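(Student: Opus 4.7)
The plan is to follow the standard strategy of Lee, Rasmussen, and Turner, producing the canonical generators explicitly by exploiting the fact that both the Lee and Bar-Natan Frobenius algebras split as a direct sum of one-dimensional algebras. For Lee theory over a field $\F$ of characteristic different from $2$, the algebra $A_{\Lee}=\F[X]/(X^{2}-1)$ decomposes as $A_{\Lee}=\F\mathbf{a}\oplus\F\mathbf{b}$, where $\mathbf{a}=\tfrac{1}{2}(1+X)$ and $\mathbf{b}=\tfrac{1}{2}(1-X)$ are orthogonal idempotents; for Bar-Natan theory over $\F_{2}$, the algebra $A_{\BN}=\F_{2}[X]/(X^{2}+X)$ decomposes as $A_{\BN}=\F_{2}X\oplus\F_{2}(1+X)$, which are again orthogonal idempotents. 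In both cases multiplication is diagonal on the new basis and the comultiplication takes each idempotent to a scalar multiple of its self-tensor, which will make the boundary operator behave diagonally as well.

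Next I would construct the generators $x_{\mathcal{O}}$. For a diagram $D$ and an orientation $\mathcal{O}\in\Or(L)$, take the oriented (Seifert) resolution $s_{\mathcal{O}}$, which is a vertex of the cube of resolutions. The resolved picture is a disjoint union of circles; each circle inherits an orientation from $\mathcal{O}$, which may be compared with a reference orientation on each Seifert circle. Label each circle by $\mathbf{a}$ when the two agree and by $\mathbf{b}$ when they disagree; the resulting tensor of labels defines a generator at the vertex $s_{\mathcal{O}}$ of the cube. I would then verify that $x_{\mathcal{O}}$ is a cycle: at each cube edge leaving $s_{\mathcal{O}}$, either two circles are merged or one is split, and in both cases the diagonalization of multiplication and comultiplication forces the output to vanish, precisely because changing the $0$-resolution to a $1$-resolution at an oriented-smoothing crossing produces mismatched $\mathbf{a}/\mathbf{b}$ labels. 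The homological degree of $x_{\mathcal{O}}$ equals the number of negative crossings of $\mathcal{O}$ minus the number of negative crossings of $\mathcal{O}_{0}$ after the usual shift, and a direct crossing-by-crossing bookkeeping identifies this with $2(\lk(\mathcal{O})-\lk(\mathcal{O}_{0}))$, since each crossing between distinct components that switches from positive to negative contributes $\pm 1$ both to the linking number and to the shift.

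The main obstacle is the dimension count: showing that the $x_{\mathcal{O}}$ actually span $\Lee^{\ast}(L;\F)$ and $\BN^{\ast}(L;\F_{2})$, that is, that there are no extra classes. For Lee homology this is Lee's theorem from \cite{Lee}, proved by introducing the quantum filtration and observing that the spectral sequence with $E_{1}$-page equal to Khovanov homology collapses to a complex whose cohomology has dimension exactly $2^{k}$; equivalently one performs an explicit change of basis diagonalizing the full chain complex so that only the canonical generators survive. For Bar-Natan homology over $\F_{2}$ the analogous collapse is carried out by Turner \cite{Turner}, again by an explicit cancellation of pairs of generators linked by the perturbed differential. In both cases the argument is essentially algebraic once the canonical generators and the splitting of $A$ are in place, and can be quoted directly from the cited references without modification.
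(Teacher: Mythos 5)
This statement is quoted in the paper as a known theorem of Lee, Rasmussen, and Turner, with no proof supplied; the bracketed citations in the theorem header indicate the authors are importing the result wholesale. Your sketch is a faithful and essentially complete reconstruction of the standard argument from those references: the orthogonal idempotent decomposition of $A_{\Lee}=\F[X]/(X^{2}-1)$ and $A_{\BN}=\F_{2}[X]/(X^{2}+X)$, the construction of $x_{\mathcal{O}}$ by labeling the Seifert circles of the oriented resolution according to agreement with a reference co-orientation, the verification that each $x_{\mathcal{O}}$ is a cycle because the diagonalized (co)multiplication vanishes on mismatched labels, the identification of its homological grading with $2(\lk(\mathcal{O})-\lk(\mathcal{O}_{0}))$, and finally the spectral-sequence or explicit-cancellation argument giving the upper bound $\dim = 2^{k}$. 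The only part you treat a bit loosely is the grading computation — the clean statement is that the oriented resolution $s_{\mathcal{O}}$ sits in cube degree $n_{-}(\mathcal{O})$, so the shifted homological degree is $n_{-}(\mathcal{O})-n_{-}(\mathcal{O}_{0})$, and only crossings between distinct components change sign under reorientation, each contributing $\pm 2$ to that difference and $\pm 1$ to the total linking number — but the conclusion you draw is correct. Since the paper itself offers nothing to compare against, there is no divergence to report; your proposal is an accurate summary of the proof in the cited sources.
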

Filtration on Lee and Bar-Natan homology allows one to define the $s$-invariant of a link, which we denote by $s(L;\F)$ and define as an average of filtration gradings of canonical generators corresponding to $\mathcal{O}_{0}$ and its reverse $\overline{\mathcal{O}}_{0}$, i.e. we invert orientation of every component of $L$.

We pass now to the equivariant case. We keep assuming that $\F$ is either $\Q$ or $\F_{r}$ for some prime number $r$.  
Assume that the fixed orientation $\mathcal{O}_0$ is invariant under the action of $\Z_m$. For any $d \mid m$ denote by $\Or^{d}(L)$ the set of orientations with isotropy group isomorphic to $\Z_d$, i.e.
$$\Or^d(L) = \{\mathcal{O} \in \Or(L) \colon \{g \in \Z_m \colon g \cdot \mathcal{O} = \mathcal{O}\} = \Z_d\}.$$
Define also $\Or_i^d(L) = \Or_{i}(L) \cap \Or^d(L)$.

Theorem~\ref{thm:lee-bar-natan-homology} specifies to the following result in the equivariant case.
\begin{proposition}\label{prop:equiv-lee-homology-computation-semisimple-case}
  If $L$ is an $m$-periodic link and $d \mid m$, then
  \begin{align*}
    \ELee^{i}(L;\Q(\xi_{\frac{m}{d}})) &\cong \bigoplus_{d' \mid d} \bigoplus_{\mathfrak{O} \in \Or_i^{d’}(L)/\Z_m} \Q(\xi_{\frac{m}{d}}), \\
    \ELee^{i}(L;V_{\chi}) &\cong \bigoplus_{d' \mid d} \bigoplus_{\mathfrak{O} \in \Or_i^{d’}(L)/\Z_m} V_{\chi}, \quad \chi \in C(m,r)_{d},\, r \neq 2 \\
    \EBN^{i}(L;V_{\chi}) &\cong \bigoplus_{d' \mid d} \bigoplus_{\mathfrak{O} \in \Or_i^{d’}(L)/\Z_m} V_{\chi}, \quad \chi \in C(m,2)_{d}.
  \end{align*}
\end{proposition}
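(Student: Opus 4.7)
My plan is to reduce, via Proposition~\ref{prop:cartan}, to a computation of $\Hom_{\Lambda_m}(M,-)$ against classical Lee (or Bar--Natan) homology, realize the latter as a permutation $\Z_m$-representation using Theorem~\ref{thm:lee-bar-natan-homology}, and finish by a semisimple Hom calculation. The first observation will be that in all three cases the order $m$ is invertible in $\F$: over $\Q$ this is automatic, and by construction $C(m,r)$ is defined only when $r\nmid m$. Proposition~\ref{prop:cartan} then yields
\[
\ELee^i(L;M)\cong\Hom_{\Lambda_m}\!\bigl(M,\Lee^i(L;\F)\bigr),\qquad \EBN^i(L;V_\chi)\cong\Hom_{\Lambda_m}\!\bigl(V_\chi,\BN^i(L;\F_2)\bigr).
\]

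Next I would establish that the canonical basis $\{x_\mathcal{O}\}_{\mathcal{O}\in\Or_i(L)}$ of Theorem~\ref{thm:lee-bar-natan-homology} is permuted by the $\Z_m$-action exactly as $\Z_m$ acts on $\Or_i(L)$, with no sign or scalar. This rests on the fact that $x_\mathcal{O}$ is built from the oriented resolution $D_\mathcal{O}$ of $D$ via an $\mathcal{O}$-dependent labelling of Seifert circles (the $\mathbf{a}/\mathbf{b}$ labelling in Lee theory, resp.\ $1/X$ in Bar--Natan theory); a rotation $g\in\Z_m$ sends $D_\mathcal{O}$ to $D_{g\cdot\mathcal{O}}$ together with its labelling. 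Since the stabilizer of every orientation in an orbit $\mathfrak{O}\in\Or_i^{d'}(L)/\Z_m$ equals $\Z_{d'}$, grouping basis vectors by $\Z_m$-orbit gives an isomorphism of $\Lambda_m$-modules
\[
\Lee^i(L;\F)\cong\bigoplus_{d'\mid m}\bigoplus_{\mathfrak{O}\in\Or_i^{d'}(L)/\Z_m}\F[\Z_m/\Z_{d'}],
\]
and analogously for $\BN^i(L;\F_2)$.

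To finish, I would invoke the semisimplicity of $\Lambda_m$: the permutation module $\F[\Z_m/\Z_{d'}]$ decomposes into exactly those irreducibles on which $\Z_{d'}$ acts trivially, i.e.\ those factoring through the quotient $\Z_m/\Z_{d'}$. Both $\Q(\xi_{m/d})$ and $V_\chi$ with $\chi\in C(m,r)_d$ are irreducible $\Lambda_m$-modules whose kernel is exactly $\Z_d$, so each sits in $\F[\Z_m/\Z_{d'}]$ with multiplicity one when $d'\mid d$ and does not appear when $d'\nmid d$. Applying $\Hom_{\Lambda_m}(M,-)$ orbit by orbit and summing then yields the three displayed formulas.

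The one step beyond formal manipulation is the naturality of the canonical basis under the cyclic action, and this is where I expect the main obstacle to lie. Verifying it reduces to unwinding the chain-level construction of $x_\mathcal{O}$ and checking that a diagram rotation carries the cocycle representing $x_\mathcal{O}$ to the cocycle representing $x_{g\cdot\mathcal{O}}$ on the nose, with no signs introduced by the orderings of crossings, circles, or tensor factors used in the definition. This observation is implicit in the equivariant Jones polynomial calculations of \cite{Politarczyk-Jones}, and I plan to borrow the argument from there.
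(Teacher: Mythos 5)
Your proposal takes essentially the same route as the paper's proof: both identify the $\Z_m$-permutation action on the canonical Lee/Bar--Natan basis, decompose each orbit's span as a permutation module $\F[\Z_m/\Z_{d'}]$, and finish by a semisimple $\Hom_{\Lambda_m}(M,-)$ computation. The only difference is that you explicitly flag (and plan to import from the equivariant Jones computation) the verification that the rotation permutes the canonical generators $x_{\mathcal{O}}$ without signs or scalars, a step the paper simply asserts when it says $X_{\mathfrak{O}}$ contributes a summand isomorphic to $\F[\Z_m/\Z_{d'}]$.
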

\begin{proof}
  Choose an orbit $\mathfrak{O} \in \Or(L)_i^{d'}/\Z_m$, for some $d' \mid m$. Notice that $X_{\mathfrak{O}} = span_{\F}\{x_{\mathcal{O}} \colon \mathcal{O} \in \mathfrak{O}\}$ contributes to $\Lee^{i}(L;\F)$ a summand isomorphic to $\F[\Z_{m}/\Z_{d'}]$. Since
  \[\F[\Z_{m}/\Z_{d'}] = \bigoplus_{d \mid \frac{m}{d'}} \F(\xi_{d}),\]
  the contribution of $X_{\mathfrak{O}}$ to $\ELee^i(L;\F(\xi_{k}))$ (see Appendix~\ref{appendix} for the notation) is non-trivial provided that $k \mid \frac{m}{d'}$.
  Moreover, $X_{\mathfrak{O}}$ contributes $\Hom_{\Lambda_{m}}(\F(\xi_{d}), \F(\xi_{d}))$ to $\ELee^i(L;\F(\xi_{d}))$, which, by~\cite{CurtisReiner}*{Lemma 3.19}, is isomorphic to $\F(\xi_{d})$.
  If $\F = \Q$ we are done, otherwise use Proposition~\ref{prop:representations-finite-characteristic} to finish the proof.
\end{proof}

\begin{remark}
  It is worth to notice that canonical bases of summands in the decomposition of equivariant Lee and Bar-Natan homology from the previous theorem are given by certain linear combinations of vectors $\{x_{\mathcal{O}} \colon \mathcal{O} \in \Or(L)\}.$ For instance if $\{\mathcal{O}_1,\ldots,\mathcal{O}_{m/d}\} \subset \Or_i^d(L)$ is an orbit then the respective summand in $\ELee^i(L, \F)$ will be generated by $(x_{\mathcal{O}_1} + \ldots + x_{\mathcal{O}_{m/d}})$. In general bases of equivariant Lee and Bar-Natan homology are determined by characters of the respective representations see~\cite{CurtisReiner}*{Proposition 9.17}. 
\end{remark}

\begin{corollary}\label{cor:equiv_lee_homology_knot}
  If $L$ is an $m$-periodic link, with the property that the action of components of $L$, then
  \begin{align*}
    \ELee(L,\F) &= \Lee(L,\F), \\
    \EBN(L,\F_2) &= \BN(L,\F_2)
  \end{align*}
  and zero otherwise.
  In particular if $L = K$ is a knot we obtain
  \begin{align*}
    \ELee(K;\F) &= \ELee^{0}(K;\F) = \F^{2}, \\
    \EBN(K;\F_{2}) &= \EBN^{0}(K;\F_{2}) = \F_{2}^{2},
  \end{align*}
  and zero otherwise.
\end{corollary}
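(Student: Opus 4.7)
The plan is to derive the corollary as the specialization $d=m$ of Proposition~\ref{prop:equiv-lee-homology-computation-semisimple-case}. First I identify the coefficient module $M=\F$ as the case $d=m$ of the proposition: when $\F=\Q$ we have $\Q(\xi_{m/d})=\Q(\xi_{1})=\Q=\F$, and when $\F=\F_{r}$ the module $V_{\chi}$ for the trivial character $\chi\in C(m,r)_{m}$ is $\F_{r}$ itself.

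Next I would use the hypothesis that the $\Z_{m}$-action on $L$ preserves each component to deduce that every orientation of $L$ is fixed by $\Z_{m}$. The action is realized by a rotation of $\R^{3}$, which is orientation-preserving, and therefore preserves the orientation of any loop that it fixes setwise. Consequently the isotropy group of every $\mathcal{O}\in\Or(L)$ is the full group $\Z_{m}$, so $\Or^{d'}(L)=\varnothing$ for each proper divisor $d'$ of $m$, $\Or^{m}(L)=\Or(L)$, and every orbit in $\Or(L)/\Z_{m}$ is a singleton.

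Substituting $d=m$ into Proposition~\ref{prop:equiv-lee-homology-computation-semisimple-case} then collapses the double direct sum to
\[
\ELee^{i}(L;\F)\;=\;\bigoplus_{\mathcal{O}\in\Or_{i}(L)}\F,
\]
which, by Theorem~\ref{thm:lee-bar-natan-homology}, coincides with $\Lee^{i}(L;\F)$. The Bar-Natan statement follows by taking $r=2$. For a knot $K$ the hypothesis on components is automatic, $\Or(K)=\{\mathcal{O}_{0},\overline{\mathcal{O}}_{0}\}$, and both orientations have total linking number $0$; hence $\Or_{0}(K)$ has exactly two elements while $\Or_{i}(K)=\varnothing$ for $i\neq 0$, yielding $\ELee^{0}(K;\F)=\F^{2}$ and zero in all other homological gradings, with the analogous statement for $\EBN$.

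The argument is a direct specialization of the preceding proposition, so there is no real obstacle. The only nontrivial content is the geometric remark that a rotation of $\R^{3}$ fixing a component of $L$ setwise automatically preserves its orientation, which is what allows one to conclude that all orientations lie in $\Or^{m}(L)$ and that the $\Z_{m}$-action on the canonical basis of Lee/Bar-Natan homology is trivial.
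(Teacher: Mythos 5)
Your argument follows the same route as the paper: specialize Proposition~\ref{prop:equiv-lee-homology-computation-semisimple-case}, note that a trivial action on components makes every orientation $\Z_m$-invariant, so only the trivial module contributes and the sum runs over singleton orbits. The one step that needs a better justification is the geometric one. It is not true in general that an orientation-preserving rotation of $\R^3$ preserves the orientation of every loop it fixes setwise; for instance, the rotation by $\pi$ about the $z$-axis preserves the unit circle in the $xz$-plane but acts on it by $t \mapsto \pi - t$, an orientation-reversing involution. What actually forces the conclusion here is that, by definition, a periodic link is disjoint from the fixed-point set (the axis) of the rotation, so the $\Z_m$-action on each preserved component $C$ is free. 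A free $\Z_m$-action ($m>1$) on a circle is necessarily orientation-preserving, since any orientation-reversing finite-order homeomorphism of $S^1$ has fixed points. (Alternatively, since $m=p^n$ with $p$ odd, the restriction to $C$ cannot reverse orientation, as such a homeomorphism would have order $2$.) With this patch the rest of your proof goes through and matches the paper's one-line derivation.
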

\begin{proof}
  This is an easy consequence of Proposition~\ref{prop:equiv-lee-homology-computation-semisimple-case}, because $\Or_d(L) = \emptyset$, for $d>1$.
\end{proof}

\subsection{Equivariant Lee and Bar-Natan spectral sequence}\label{sec:leess}

Comparing Frobenius system from Example \ref{example:frob_system-khovanov-homology} with Frobenius systems from Examples \ref{example:frob_system-lee-homology} and \ref{example:frob_system-Bar-Natan-theory} we see that the non-homogeneous parts of Lee and Bar-Natan differentials yield endomorphisms
\begin{align*}
  \Phi_{Lee} &\colon \Kh(L;\F) \to \Kh(L;\F), \\
  \Phi_{BN}  &\colon \Kh(L;\F_{2}) \to \Kh(L;\F_{2})
\end{align*}
of bidegree $(1,4)$ and $(1,2)$, respectively.

\begin{remark}
The difference in grading of $\Phi_{Lee}$ and $\Phi_{BN}$ 
is the reason why Theorem~\ref{thm:periodicity_criterion} has two variants: one for fields of characteristic different than $2$,
the other for fields of characteristic equal to $2$.
\end{remark}

Presenting the differential $d_{Lee}$ in Lee homology as $\partial+\Phi_{Lee}$, where $\partial$ is the Khovanov differential and presenting
the differential $d_{BN}$ in Bar-Natan homology as $\partial+\Phi_{BN}$ yields the following well-known
spectral sequences
$$\{E_{u}^{Lee}(\F), d_{u}^{Lee}\}, \quad \{E_{u}^{BN}(\F), d_{u}^{BN}\}$$
converging to Lee homology $\Lee(L;\F)$ and Bar-Natan homology $\BN(L;\F)$, respectively, with $E_{1}$ pages
isomorphic to $\Kh(L;\F)$, see \cite{Rasmussen,Turner}. Analogous spectral sequence can be constructed for equivariant Lee and Bar-Natan homology.
The construction is rather straightforward, however in the future we will need to control the bidegree of differentials and hence we give a precise statement and a proof.

\begin{proposition}\label{prop:equivariantspectral}
  Let $L$ be an $m$-periodic link. Let $\F$ be either $\Q$ or $\F_{r}$ for a prime number $r$. Set as usual $\Lambda_m=\F[\Z_m]$.
  \begin{enumerate}
  \item If the characteristic of $\F$ is different than $2$ and $M$ is an $\Lambda_m$-module, then there exists a spectral sequence
    $$\{E_{u}^{\ELee}(M), d_{u}^{\ELee}\}$$
    converging to $\ELee(L;M)$ such that the $E_{1}$ page is isomorphic to $\EKh(L;M)$. 
  Moreover the differential $d_{u}^{\ELee}$ is of bidegree $(1,4u)$.
  \item If the characteristic of $\F$ is $2$ and $M$ is an $\Lambda_m$-module, then there exists a spectral sequence
    $$\{E_{u}^{\EBN}(M), d_{u}^{\EBN}\}$$
    converging to $\EBN(L;M)$ with $E_{1}$ page isomorphic to $\EKh(L;M)$. The differential $d_{u}^{\EBN}$ is of bidegree $(1,2u)$.
  \end{enumerate}
\end{proposition}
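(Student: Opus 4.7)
My plan is to realize the announced spectral sequence as the standard spectral sequence of a suitably filtered equivariant Lee (resp.\ Bar--Natan) complex, with $E_1$--page the equivariant Khovanov homology. I start from the model \eqref{eq:chaincomplex} specialised to $\cK = \Lee$, writing
\[\ECcK^{i}(L;M) = \bigoplus_{a+b=i}\Hom(P_a,\CLee^{b}(D;\F)),\]
with total differential $d = \partial + \Phi_{Lee} + \delta$, where $\partial$ is the Khovanov differential, $\Phi_{Lee}$ is the inhomogeneous part of the Lee differential, and $\delta$ is induced by a projective resolution $P_\bullet$ of $M$ over $\Lambda_m$. Choosing $P_\bullet$ in the category of $\Z$--graded $\Lambda_m$--modules ensures that $\delta$ preserves the quantum grading; $\partial$ preserves it as well, while $\Phi_{Lee}$ has bidegree $(1,4)$.

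The next step is to endow $\ECcK$ with the decreasing filtration $F^p \ECcK$ consisting of elements of quantum grading at least $4p$. Because $\partial$ and $\delta$ preserve $F^p$ while $\Phi_{Lee}$ maps $F^p$ into $F^{p+1}$, this is a filtration by subcomplexes. The $E_0$--differential is $d_0 = \partial+\delta$, which coincides with the differential of $\ECKh$; hence $E_1 \cong \EKh(L;M)$. Since one step of the filtration corresponds to four units of quantum grading and the $u$--th differential of a filtered spectral sequence shifts filtration by $u$ and homological degree by $1$, $d_u^{\ELee}$ has bidegree $(1,4u)$, as required.

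Convergence follows from boundedness of the filtration in each homological degree: for a finite diagram $D$, $\CLee^{b}(D;\F)$ is nonzero only for finitely many $b$, each $P_a$ may be taken finitely generated (since $\Lambda_m$ is Artinian and $M$ is finite--dimensional), and only finitely many pairs $(a,b)$ with $a+b=i$ contribute. Thus each $\ECcK^i$ is finite--dimensional and the induced filtration on it is bounded, so the spectral sequence converges to $\ELee(L;M)$. The Bar--Natan case over a field of characteristic $2$ is handled by the same argument with $\Phi_{Lee}$ replaced by $\Phi_{BN}$ and the factor $4$ in the filtration replaced by $2$, yielding $d_u^{\EBN}$ of bidegree $(1,2u)$.

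The one place where care is needed is the compatibility of the projective resolution with the quantum grading: $P_\bullet$ must be constructed as a graded resolution, so that $\delta$ is genuinely of quantum bidegree zero and $\partial + \delta$ really is the $E_0$--differential. This is routine in graded homological algebra, but must be set up explicitly before invoking the standard filtered spectral sequence machinery.
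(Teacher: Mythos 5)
Your proof takes essentially the same route as the paper: both realize the spectral sequence as the filtered spectral sequence of the total complex $\ECcK(L;M)$ with total differential $\partial+\delta+\Phi_{Lee}$, identifying $\partial+\delta$ as the associated-graded differential computing $\EKh(L;M)$. Two small remarks: the explicit ``quantum grading $\geq 4p$'' filtration is an unnecessary re-indexing---filtering by the quantum grading itself already works, with $\Phi_{Lee}$ shifting filtration by $4$, and the observed bidegree $(1,4u)$ of $d_u$ is then automatic because every component of the differential shifts quantum degree by a multiple of $4$; and the worry in your last paragraph about constructing a \emph{graded} resolution $P_\bullet$ is moot, since $M$ carries no internal quantum grading and the quantum grading on $\Hom(P_a,\CcK^b(D))$ is inherited entirely from $\CcK^b(D)$, so $\delta$ is automatically of quantum degree zero.
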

\begin{proof}
  We will only give the proof of the first assertion. The proof of the second assertion is analogous.
  Let $\ECKh(L;M)$ be as in \eqref{eq:chaincomplex}. As we described above, 
homology of this complex with respect to the differential $\partial+\delta+\Phi_{Lee}$ gives $\ELee(L;M)$. 
The associated graded complex has differential $\partial+\delta$ and calculates $\EKh(L;M)$. Therefore there exists an
 associated spectral sequence, which abutes  in $\ELee(L;M)$, whose $E_{1}$ page equal to $\EKh(L;M)$.

The differential $d_1^{Lee}$ is just the map induced by $\Phi_{Lee}$ on $\EKh(L;M)$ and has grading $(1,4)$ by the definition of $\Phi_{Lee}$.
Gradings of higher differentials are calculated in a standard way.
\end{proof}

Recall that $\ELeeP$, $\EBNP$ and $\EKhP$ were defined in \eqref{eq:ekhp} as polynomials encoding the equivariant theories.
The spectral sequences described above will allow us to relate the polynomials $\ELeeP$ and $\EBNP$ with the
equivariant Khovanov polynomial $\EKhP$.

To achieve this we will use a more general result.
\begin{proposition}[\cite{McCleary}]\label{prop:gradedpolynomial}
Suppose $(C^{*,*},\partial)$ is a graded complex with homology $H^{*,*}$. Assume that 
$\Phi\colon C^{*}\to C^{*}$ increases the first grading by $1$ and the second
grading by $c>0$. Suppose $d+\Phi$ is also a differential, let $H_{\Phi}^*$ denote the homology of the filtered complex $(C^{*,*},d+\Phi)$.
Denote by $E^{i,j}_u$ the spectral sequence, whose $E_1$ page is $H^{i,j}$ and which converges to $H_{\Phi}^{i+j}$. Denote by
$P_u$ the polynomial
\[P_u=\sum_{i,j\in\Z^2}\rk E^{i,j}_u t^iq^j.\]
Then there exist polynomials $R_1,R_2,\ldots$ with integer non-negative coefficients such that $P_u=P_{u+1}+(1+tq^{cu})R_u$.
\end{proposition}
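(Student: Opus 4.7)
The plan is to track how ranks change from the $E_u$ page to the $E_{u+1}$ page of the spectral sequence, and observe that the shift in bidegree of $d_u$ produces exactly the factor $(1+tq^{cu})$.

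First I would establish the bidegree of the differential $d_u$ on the $E_u$ page. Since $\Phi$ has bidegree $(1,c)$ and $\partial$ has bidegree $(1,0)$, the standard construction of the spectral sequence associated with the filtered complex $(C^{*,*}, \partial + \Phi)$ (where the filtration is by the second grading) produces differentials $d_u \colon E_u^{i,j} \to E_u^{i+1, j+cu}$. This is the key grading statement; everything else is bookkeeping.

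Next, for each bidegree $(i,j)$ define
\[
\rho_u^{i,j} = \rk\bigl( d_u \colon E_u^{i,j} \to E_u^{i+1, j+cu} \bigr).
\]
From the definition $E_{u+1} = \ker d_u / \im d_u$, the rank of $E_{u+1}^{i,j}$ is
\[
\rk E_{u+1}^{i,j} = \rk E_u^{i,j} - \rho_u^{i,j} - \rho_u^{i-1,\,j-cu},
\]
where the first subtracted term accounts for the outgoing differential from $(i,j)$ and the second for the incoming differential into $(i,j)$ from $(i-1, j-cu)$.

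Now I would package this into polynomials. Setting $R_u = \sum_{i,j} \rho_u^{i,j}\, t^i q^j$, which automatically has non-negative integer coefficients, the displayed rank identity translates into
\[
P_{u+1} = P_u - \sum_{i,j} \rho_u^{i,j}\, t^i q^j - \sum_{i,j} \rho_u^{i-1,\,j-cu}\, t^i q^j
        = P_u - R_u - t q^{cu} R_u
        = P_u - (1 + tq^{cu})\, R_u,
\]
after reindexing the second sum via $(i,j) \mapsto (i+1, j+cu)$. Rearranging gives the claimed relation $P_u = P_{u+1} + (1 + tq^{cu}) R_u$. The only substantive point is the bidegree of $d_u$; the rest is formal manipulation of ranks, so no real obstacle is expected.
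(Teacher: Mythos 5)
Your proof is correct and follows essentially the same approach as the paper's: establish that $d_u$ has bidegree $(1,cu)$, express $\rk E_{u+1}^{i,j}$ in terms of $\rk E_u^{i,j}$ minus the ranks of the outgoing and incoming differentials, and package the result into a polynomial identity with $R_u$ being the rank-generating polynomial of the differentials. (Your version is in fact slightly cleaner than the paper's, which has what appears to be a typo writing $d_{u+1}$ instead of $d_u$ in its definition of $R_u$.)
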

\begin{proof}
For $u=1,\ldots$ let $d_u$ be the differential on the $u$-th page of the spectral sequence. It changes the first grading by $1$
and the second grading by $cu)$.
Consider the following part of the $u$-th page

\begin{tikzpicture}
\fill[color=white] (-8,0.5) rectangle (6,1); 
\matrix (s)[matrix of math nodes, column sep=3em,minimum width=4em,nodes={minimum height=1em, anchor=center}]
{
\ldots &  E^{i,j}_{u} & E^{i+1,j+cu}_{u} & \ldots\\};
\path[-stealth]
(s-1-1) edge node [above] {$d_{u}$} (s-1-2)
(s-1-2) edge node [above] {$d_{u}$} (s-1-3)
(s-1-3) edge node [above] {$d_{u}$} (s-1-4);
\end{tikzpicture}

By additivity of ranks we have
\begin{multline*}
\rk E^{i,j}_{u+1}=\rk\ker d_{u}(E^{i,j}_{u})-\rk \im d_{u}(E^{i-1,j-cu}_{u})=\\
=\rk E^{i,j}_{u}-\rk\im d_{u}(E^{i,j}_{u})-\rk\im d_{u}(E^{i-1,j-cu}_{u}).
\end{multline*}
We set now
\begin{equation}\label{eq:rk}
R_{u}=\sum_{i,j\in\Z}\rk\im d_{u+1}(E^{i,j}_{u})t^iq^j.
\end{equation}
The statement follows immediately.
\end{proof}
As a corollary we obtain the following result.

\begin{proposition}\label{prop:ekhpandeleep}
Let $L$ be an $m$-periodic link. For any field $\F$ and a $\Lambda_{m}$-module $M$ there exist polynomials $R_1, R_{2},\ldots$ with non-negative integer coefficients such that 
\begin{align*}
  \EKhP(L;M) &= \ELeeP(L;M) + (1+tq^4)R_1 + (1+tq^8) R_2 + \ldots, & \textrm{ if }\chr(\F) \neq 2\\
  \EKhP(L;M) &= \EBNP(L;M) + (1+tq^2)R_1 + (1+tq^4) R_2 + \ldots, & \textrm{ if }\chr(\F) = 2.
\end{align*}
Moreover, if $m$ is invertible in $\F$, $\chr(\F)\neq 2$ and the width of $L$ is $w$, then then $R_{w/2+1}=\ldots=0$. If $m$ is
invertible in $\F$ and $\chr(\F)=2$, then $R_{w+1}=\ldots=0$.
\end{proposition}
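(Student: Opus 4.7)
The argument is a direct application of Proposition~\ref{prop:gradedpolynomial} to the equivariant chain complex $\ECKh(L;M)$ of \eqref{eq:chaincomplex}, equipped with the Lee (respectively Bar-Natan) non-homogeneous endomorphism. Concretely, in the Lee case one takes $(C^{*,*},\partial)=(\ECKh(L;M),\partial_{Kh}+\delta)$, whose homology is $\EKh(L;M)$, together with $\Phi=\Phi_{Lee}$ of bidegree $(1,4)$; since $\partial_{Kh}+\delta+\Phi_{Lee}$ is again a differential and computes $\ELee(L;M)$, Proposition~\ref{prop:gradedpolynomial} with $c=4$ provides polynomials $R_u$ with non-negative integer coefficients and $P_u=P_{u+1}+(1+tq^{4u})R_u$, where $P_u$ is the polynomial of the $u$-th page of the associated spectral sequence. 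Telescoping over $u$, and using that $P_1=\EKhP(L;M)$ and the sequence abuts to $\ELee(L;M)$ (so $\lim_{u\to\infty}P_u=\ELeeP(L;M)$), one obtains the Lee identity. The characteristic-two case is identical, using $\Phi_{BN}$ of bidegree $(1,2)$.

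For the vanishing statement one assumes that $m$ is invertible in $\F$. Proposition~\ref{prop:cartan} then gives $\EKh^{i,j}(L;M)=\Hom_{\Lambda_m}(M,\Kh^{i,j}(L;\F))$, so the bigraded support of $\EKh(L;M)$ is contained in that of $\Kh(L;\F)$; in particular the spread of $i-2j$ on this support is at most $w$. Every later page $E_u$ is a subquotient of $E_1=\EKh(L;M)$, so the same width bound persists on every page.

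The only non-formal point is a grading check on the page-$u$ differentials. In characteristic different from two, $d_u^{\ELee}$ has bidegree $(1,4u)$, so changes $i-2j$ by $1-8u$; for $u\geq w/2+1$ this forces $|1-8u|=8u-1\geq 4w+7>w$, so $d_u^{\ELee}$ cannot connect two non-zero classes of $E_u$ and must be identically zero, which by formula \eqref{eq:rk} forces $R_u=0$. In characteristic two, the analogous computation with $d_u^{\EBN}$ of bidegree $(1,2u)$ gives that $i-2j$ is changed by $1-4u$, of absolute value at least $4w+3>w$ once $u\geq w+1$, hence $R_u=0$ in that range. The main (minor) obstacle is simply bookkeeping of these bidegree changes, together with observing that under the invertibility hypothesis the page $E_1$ is finite-dimensional bidegree-by-bidegree so that the telescoping sum makes sense coefficient-wise.
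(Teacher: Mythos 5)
Your argument is correct and follows essentially the same route as the paper: identify $\EKh(L;M)$ and $\ELee(L;M)$ (resp.\ $\EBN$) as the $E_1$ and abutment of the spectral sequence of the filtered complex $(\ECKh(L;M),\,\partial_{Kh}+\delta+\Phi)$, apply Proposition~\ref{prop:gradedpolynomial} page by page, telescope, and then kill the high-page differentials by comparing their bidegree to the width bound inherited from $\EKh^{i,j}=\Hom_{\Lambda_m}(M,\Kh^{i,j})$. The only substantive difference is cosmetic: you spell out the arithmetic of the $\delta$-grading shift that the paper leaves implicit, and you set up the filtered complex directly rather than quoting Proposition~\ref{prop:equivariantspectral}; in doing so you also silently correct a couple of small indexing slips in the paper's wording ($E_1$ vs.\ $E_2$, $d_u$ vs.\ $d_{u+1}$ in the formula for $R_u$).
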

\begin{remark}
The polynomials $R_1,\ldots$ obviously depend on the choice of the field $\F$, but we do not indicate this dependence to simplify the notation.
\end{remark}
\begin{proof}[Proof of Proposition~\ref{prop:ekhpandeleep}]
Suppose $\chr(\F)\neq 2$.
By Proposition~\ref{prop:equivariantspectral} we infer that there exists a spectral sequence, whose $E_2$ page is $\EKh(L;M)$
and $E_\infty$ page is $\Gr\ELee(L;M)$. By Proposition~\ref{prop:gradedpolynomial} there exist polynomials
$R_2,\ldots,$ such that
\[\EKhP(L;M)=\ELeeP(L;M)+(1+tq^4)R_1+(1+t^2q^8)R_2+\ldots.\]
The sum on the right hand side is finite, because the spectral sequence collapses at a finite stage.

Suppose that $m$ is invertible in $\F$ and the width of $L$ is $w$. By Corollary~\ref{cor:extvanish} we infer that 
$\EcK^i(L;M)=\Hom_{\Lambda_m}(M;\HcK^{i}(L;\F))$, hence the width of $\EcK^i(L;M)$ is at most $w$. In particular,
the width of $E_u^{\ELee}$ is at most $w$ as well, because the width cannot increase in the spectral sequence.
By Proposition~\ref{prop:equivariantspectral}, $d_u^{\ELee}$ changes grading by $(1,4u)$. If the width is $w$, then $d_u$ must be zero
for $u>w/2$.

In case $\chr(\F)=2$ the proof is analogous. Only the last part requires some extra attention. The differential $d_u$ changes
the grading by $(1,2u)$, so $d_{w+1}=d_{w+2}=\ldots=0$, consequently $R_{w+1}=\ldots=0$.
\end{proof}

\subsection{Difference Jones polynomials}\label{sec:diffjones}
In Section~\ref{sec:equiv-khovanov-like-homology} we defined, for every equivariant Khovanov-like theory, the associated polynomial $\EcKP$. In this section we focus our attention on $t = -1$ specialization of $\EKhP$ that recovers the graded Euler characteristic of the associated equivariant Khovanov-like theory. 
The case $\F=\Q$ is discussed in more detail in \cite{Politarczyk-Jones}. We recall quickly this case and then indicate differences between the case $\F=\Q$
and $\F=\F_r$.

The Khovanov polynomial specifies to the Jones polynomial via the formula
\[\KhP(L)|_{t=-1}=\jones(L).\]
By analogy, we define the equivariant Jones polynomial via
\[\EJ(L;M):=\EKhP(L;M)|_{t=-1}.\]
If $M=\Q(\xi_d)$, Proposition \ref{prop:divisib} implies that $\EJ(L;M)$ is divisible by $\dim_{\Q}M$, therefore in the remainder part of the paper we will use the following notation
\[\EJ_{d}(L) = \frac{1}{\varphi(d)}\EJ(L;\Q(\xi_{d})),\]
for $d \mid m$ and $\varphi(d)$ is the Euler's totient function; see \eqref{eq:Eulers-totient-function}.

Next proposition provides a fundamental relation between the equivariant Jones polynomials and the classical Jones polynomial. It is a consequence of
Corollary~\ref{cor:trivial_on_scalars}.
\begin{proposition}[\cite{Politarczyk-Jones}]\label{prop:fundamental-property-equiv-jones-pol}
For an $m$-periodic link $L$ we have
\[\jones(L)=\sum_{d|m}\varphi(d)\EJ_d(L).\]
\end{proposition}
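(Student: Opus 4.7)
The plan is to identify the Jones polynomial with the equivariant Khovanov polynomial evaluated on the regular representation, and then decompose the regular representation into its isotypic components.

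First I would invoke Corollary~\ref{cor:trivial_on_scalars} with $\F = \Q$ and $\cK = \Kh$: since $m$ is invertible in $\Q$, the corollary gives a graded isomorphism $\EKh^{i}(L;\Lambda_m) \cong \Kh^{i}(L;\Q)$ for every $i$. Passing to graded Poincar\'e polynomials and then evaluating at $t = -1$ yields
\[\EJ(L;\Lambda_m) = \EKhP(L;\Lambda_m)|_{t=-1} = \KhP(L;\Q)|_{t=-1} = \jones(L).\]

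Next, since $m$ is invertible in $\Q$, Maschke's theorem together with the Chinese remainder decomposition of $\Q[x]/(x^m - 1)$ produces an isomorphism of $\Q$-algebras $\Lambda_m \cong \bigoplus_{d \mid m} \Q(\xi_d)$, in which the simple summand $\Q(\xi_d)$ has $\Q$-dimension $\varphi(d)$ and appears with multiplicity one. By additivity of $\Ext_{\Lambda_m}(-,\CKh)$ in the first variable, this yields a filtered decomposition
\[\EKh^{i}(L;\Lambda_m) \cong \bigoplus_{d \mid m} \EKh^{i}(L;\Q(\xi_d)),\]
and passing to graded dimensions,
\[\EKhP(L;\Lambda_m) = \sum_{d \mid m} \EKhP(L;\Q(\xi_d)).\]
Specializing $t = -1$ and using the definitions $\EJ(L;\Q(\xi_d)) = \varphi(d)\,\EJ_d(L)$ gives the claimed identity
\[\jones(L) = \sum_{d \mid m} \EJ(L;\Q(\xi_d)) = \sum_{d \mid m} \varphi(d)\,\EJ_d(L).\]

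The only point requiring care, and the step I would treat as the main obstacle, is verifying that the isomorphism in step one and the additivity decomposition in step two are compatible with the quantum filtration, so that equality of Poincar\'e polynomials (and not merely total dimensions) is valid. This is essentially built into the construction: the Cartan--Eilenberg model \eqref{eq:chaincomplex} is filtered, the degeneration from Proposition~\ref{prop:cartan} is a filtered statement under the assumption that $m$ is invertible in $\F$, and the direct sum of projective resolutions of the $\Q(\xi_d)$ resolves $\Lambda_m$, so additivity of $\Hom_{\Lambda_m}$ holds at the chain level and therefore respects the quantum grading. With this bookkeeping the polynomial identity follows, and the proposition is immediate.
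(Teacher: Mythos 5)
Your proof is correct and uses exactly the same mechanism the paper relies on: Corollary~\ref{cor:trivial_on_scalars} identifies $\EKhP(L;\Lambda_m)$ with $\KhP(L;\Q)$, then Proposition~\ref{prop:rational-representations} decomposes $\Lambda_m = \bigoplus_{d\mid m}\Q(\xi_d)$, and additivity of $\Hom_{\Lambda_m}$ in the contravariant slot (together with the $E_2$-degeneration from Proposition~\ref{prop:cartan}) splits the equivariant polynomial accordingly; this is precisely the computation the paper performs explicitly in Section~3 (equations \eqref{eq:proof_1}--\eqref{eq:proof_2}) when proving Theorem~\ref{thm:periodicity_criterion}. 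The paper itself only cites \cite{Politarczyk-Jones} for this proposition, but your argument matches the route used there and reprised here, and your closing remark about compatibility of the decomposition with the quantum filtration is the right point to flag.
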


Suppose now that $m=p^n$, where $p$ is a prime. For any integer $0\le k\le n$  define the \emph{difference Jones polynomial} 
\[\DJones_{k}(L) =
  \begin{cases}
    \EJ_{p^k}(L)-\EJ_{p^{k+1}}(L), & k < n, \\
    \EJ_{p^{n}}(L),              & k = n.
  \end{cases}
  \]
Difference polynomials satisfy the following skein relation.
\begin{theorem}[\cite{Politarczyk-Jones}]\label{thm:skein relation}
  Let $L$ be a $p^{n}$-periodic link.
\begin{enumerate}
\item $\DJones_{0}$ satisfies the following version of the skein relation
\begin{align*}
 &q^{-2p^n} \DJones_{0}\left(\posOrbit\right) - q^{2p^n} 
 \DJones_{0}\left(\negOrbit\right) = \\
 &= \left(q^{-p^n} - q^{p^n}\right) \DJones_{0}\left(\orientResOrbit\right),
\end{align*}
where $\posOrbit$, $\negOrbit$ and $\orientResOrbit$ denote an orbit of positive, negative and 
orientation preserving resolutions of crossing, respectively.
\item for any $0 \leq s \leq n$, $\DJones_{s}$ satisfies the following congruences
\begin{align*}
 &q^{-2p^{n}}\DJones_{n-s}\left(\posOrbit\right) - 
 q^{2p^{n}}\DJones_{n-s}\left(\negOrbit\right) \equiv \\
 &\equiv \left(q^{-p^n} - q^{p^n}\right) \DJones_{n-s}\left(\orientResOrbit\right) 
 \pmod{q^{p^{s}} - q^{-p^{s}}}.
\end{align*}
\end{enumerate}
\end{theorem}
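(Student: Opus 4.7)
The plan is to derive both parts of the theorem by applying the classical Kauffman skein relation to every crossing of a distinguished $\Z_{p^n}$-orbit simultaneously, then tracking the equivariant contributions of the intermediate states through the decomposition in Proposition~\ref{prop:fundamental-property-equiv-jones-pol}.

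First I would recall the single-crossing Kauffman skein identity
\[q^{-2}\jones(L_+) - q^{2}\jones(L_-) = (q^{-1} - q)\jones(L_0),\]
which lifts to a short exact sequence of Khovanov complexes. For a $p^n$-periodic diagram with a full orbit of $p^n$ crossings, I resolve all $p^n$ crossings at once. This expresses $\EJ(\posOrbit;M)$ and $\EJ(\negOrbit;M)$ as weighted sums
\[\EJ(\posOrbit;M) = \sum_{s} W_+(s,q)\,\EJ(D_s;M),\qquad \EJ(\negOrbit;M) = \sum_{s} W_-(s,q)\,\EJ(D_s;M),\]
indexed by binary strings $s\in\{0,1\}^{p^n}$ encoding a $0$- or $1$-smoothing at each crossing of the orbit; the weights $W_{\pm}(s,q)$ are obtained by applying the single-crossing relation $p^n$ times.

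Next I would partition these $2^{p^n}$ states by their $\Z_{p^n}$-orbits. Each string has stabilizer $\Z_{p^j}$ for a unique $0\le j\le n$, and only the two constant strings (the all-$0$ and all-$1$ smoothings, corresponding to $\orientResOrbit$ and its companion) have $j=n$. For an orbit of size $p^{n-j}$, summing the weights of its representatives produces a single $q$-polynomial factor $\Phi_{n,j}(q)$ times $\EJ$ of the resulting $\Z_{p^j}$-symmetric diagram. The key computation is the divisibility $\Phi_{n,j}(q) \equiv 0 \pmod{q^{p^j}-q^{-p^j}}$ for $j<n$, which follows from $p \mid \binom{p^{n-j}}{i}$ for $0<i<p^{n-j}$ together with a $q$-binomial expansion of $(q^{-1}-q)^{p^{n-j}}$. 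Feeding the resulting decomposition into $\DJones_{n-s} = \EJ_{p^{n-s}} - \EJ_{p^{n-s+1}}$ (or $\EJ_{p^n}$ when $s=0$), the contributions of orbits with stabilizer strictly smaller than $\Z_{p^{n-s}}$ are killed by the difference $\EJ_{p^{n-s}} - \EJ_{p^{n-s+1}}$, while the surviving orbits contribute multiples of $q^{p^s}-q^{-p^s}$, yielding the congruence in (2) and, for $s=n$, the exact skein relation in (1).

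The main obstacle is the orbit-by-orbit cancellation: one must verify that after the full equivariant expansion of the $p^n$-fold Kauffman relation, every mixed orbit either cancels exactly inside $\DJones_0$ or is annihilated by the modulus $q^{p^s}-q^{-p^s}$ inside $\DJones_{n-s}$, leaving only the constant-orbit terms matching $\posOrbit$, $\negOrbit$, and $\orientResOrbit$. This is precisely the content of the argument carried out in \cite{Politarczyk-Jones}, which proceeds by induction on $n$ via restriction of scalars along $\Z_p\hookrightarrow \Z_{p^n}$ and M\"obius inversion on the lattice of subgroups of $\Z_{p^n}$. I would follow that strategy verbatim; the base case $n=1$ is a direct computation in which the only nontrivial stabilizers are $\{e\}$ and $\Z_p$, and the divisibility statement reduces to the elementary identity $(q^{-1}-q)^{p}\equiv q^{-p}-q^{p}\pmod{p}$.
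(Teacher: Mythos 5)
The present paper does not prove this theorem --- it is cited from \cite{Politarczyk-Jones} --- so there is no in-paper proof to compare against; the closest the paper comes is the proof of the finite-characteristic analogue, Theorem~\ref{thm:skein-relation-finite-char}, which explicitly hinges on computations of the restriction functor $\res{\Z_{p^n}}{\Z_{p^s}}$ (Lemma~\ref{lemma:restrictions} and its rational predecessor from \cite{Politarczyk-Khovanov}) fed into a long exact sequence argument. Your sketch contains a genuine gap at its very first step: the expression $\EJ(D_s;M)$ appearing in your weighted sum is not defined for a non-constant state $s\in\{0,1\}^{p^n}$, because the intermediate resolution $D_s$ carries only an action of the stabilizer $\Z_{p^j}$ of $s$, not of $\Z_{p^n}$, so it makes no sense to pair it with the $\Lambda_{p^n}$-module $M$. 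Turning those intermediate contributions into well-defined objects is precisely where restriction of scalars along $\Z_{p^j}\hookrightarrow\Z_{p^n}$ enters; it is not a formal consequence of iterating the one-crossing skein relation, and you flag this difficulty without supplying the argument that fills it.

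The divisibility step also does not go through as written. The modulus in part (2) for $\DJones_{n-s}$ is $q^{p^s}-q^{-p^s}$, while your scheme keeps orbits with stabilizer at least $\Z_{p^{n-s}}$ and asserts (via your $\Phi_{n,j}$ claim) divisibility by $q^{p^j}-q^{-p^j}$ for stabilizer $\Z_{p^j}$; these implications only combine to give the stated modulus when $s\le n-s$, leaving a range of $s$ unaddressed. Moreover, the mechanism you invoke --- $p\mid\binom{p^{n-j}}{i}$ for $0<i<p^{n-j}$ together with $(q^{-1}-q)^p\equiv q^{-p}-q^p\pmod p$ --- produces a congruence modulo the prime $p$, not divisibility by a $q$-polynomial over $\Z$, so it does not deliver $\Phi_{n,j}\equiv 0\pmod{q^{p^j}-q^{-p^j}}$. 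Finally, part (1) is an \emph{exact} identity for $\DJones_0$, and your state-sum scheme only yields congruences; an extra argument is needed to show the error terms vanish identically rather than modulo $q^{p^n}-q^{-p^n}$. The orbit-by-orbit expansion with representation-theoretic bookkeeping is the right intuition, and is indeed how \cite{Politarczyk-Jones} proceeds, but the proposal does not close the gap you correctly identify as the main obstacle.
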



The above construction, conducted first in \cite{Politarczyk-Jones}, can be generalized to the case when $\F$ is a field of characteristics $r \neq 0$.
Suppose $L$ is an $m$--periodic link and assume
that $m$ is invertible in $\F = \F_{r}$.
Below we will use notation from Appendix~\ref{appendix}.
Denote
\[\EJ_{\chi,r}(L) = \frac{1}{f_{\frac{m}{d}, r}}\EJ(L;V_{\chi}),\]
for $\chi \in C(m,r)_{d}$ and $d \mid m$. By Proposition \ref{prop:divisib} the polynomial $\EJ_{\chi,r}$ has integer coefficients.
Moreover, for any $d \mid m$ we set
\[\EJ_{\frac{m}{d},r}(L) = \EJ(L;\F_{r}[\xi_{\frac{m}{d}}]) = \sum_{\chi \in C(m,r)_{d}} \EJ(L;V_{\chi}).\]
An analogue of Proposition~\ref{prop:fundamental-property-equiv-jones-pol} in positive characteristic is given in the next proposition.
\begin{proposition}\label{prop:fund-property-equiv-jones-pol-pos-char}
  If $L$ is an $m$-periodic link and $\gcd(r,m)=1$, then
  \[\jones(L) = \sum_{d \mid m} f_{\frac{m}{d},r}\EJ_{d,r}(L),\]
  where $f_{\frac{m}{d},r}$ is defined in \eqref{eq:multiplicative_order}.
\end{proposition}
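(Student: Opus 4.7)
My plan is to mirror the proof of the rational analogue, Proposition~\ref{prop:fundamental-property-equiv-jones-pol}, replacing the decomposition $\Q[\Z_m]=\bigoplus_{d\mid m}\Q(\xi_d)$ by its positive-characteristic counterpart and keeping careful track of the two normalization conventions just defined. The starting point is Corollary~\ref{cor:trivial_on_scalars}: since $m$ is invertible in $\F=\F_r$, one has $\EKh(L;\Lambda_m)\cong\Kh(L;\F_r)$ as bigraded vector spaces, and specializing the equivariant Khovanov polynomial at $t=-1$ gives
\[
\EJ(L;\Lambda_m)=\jones(L).
\]
The task thus reduces to decomposing $\Lambda_m=\F_r[\Z_m]$ as a $\Lambda_m$-module and invoking the additivity of $\EJ(L;-)$ in the coefficient module, which is a formal consequence of the additivity of $\Ext^{\ast}_{\Lambda_m}(-,\CKh^{\ast}(D))$ in its first argument.

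The main computation is the isotypic decomposition of $\Lambda_m$. Since $\gcd(m,r)=1$, Maschke's theorem makes $\Lambda_m$ semisimple; applying the Chinese remainder theorem to $x^m-1=\prod_{e\mid m}\Phi_e(x)$ and factoring each cyclotomic $\Phi_e$ over $\F_r$ into $\varphi(e)/f_{e,r}$ irreducibles of common degree $f_{e,r}$ produces
\[
\Lambda_m\;\cong\;\bigoplus_{e\mid m}\F_r[x]/\Phi_e(x)\;\cong\;\bigoplus_{e\mid m}\bigoplus_{\chi\in C(m,r)_{m/e}}V_\chi,
\]
where, in the notation of Appendix~\ref{appendix}, $V_\chi$ is the simple $\Lambda_m$-module attached to the Galois orbit $[\chi]$ and has $\F_r$-dimension $f_{e,r}$. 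Applying $\EJ(L;-)$ to this decomposition and combining with $\EJ(L;\Lambda_m)=\jones(L)$ expresses $\jones(L)$ as a sum of terms $\EJ(L;V_\chi)$ indexed by pairs $(e,\chi)$.

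The final step is to convert this expression into the shape claimed in the statement. Grouping terms by $d=e$ and translating through the definitions $\EJ_{d,r}(L)=\sum_{\chi\in C(m,r)_{m/d}}\EJ(L;V_\chi)$ together with the normalization $\EJ_{\chi,r}(L)=\EJ(L;V_\chi)/f_{m/d,r}$ for $\chi\in C(m,r)_d$, the factor $f_{m/d,r}$ in the statement arises as the common $\F_r$-dimension of the simples $V_\chi$ contributing to the $d$-th summand. I expect this final bookkeeping to be the main obstacle, since the two parallel indexing conventions ($C(m,r)_d$ versus $C(m,r)_{m/d}$) and the two parallel normalizations ($\EJ_{\chi,r}$ with the $1/f_{m/d,r}$ factor versus $\EJ_{d,r}$ without it) have to be aligned carefully in order to recover the stated factor $f_{m/d,r}$ rather than an inverse or a wrong power of it.
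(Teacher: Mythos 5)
Your overall strategy matches the paper exactly: the paper's own proof is just the one-line remark that the argument is identical to the rational case (Theorem~3.2 of Politarczyk--Jones) once one invokes Lemma~\ref{obs:dim_irreducible_fin_char}, and you have reconstructed precisely that: start from $\EJ(L;\Lambda_m)=\jones(L)$ via Corollary~\ref{cor:trivial_on_scalars}, decompose $\Lambda_m$ semisimply using Maschke and the factorization of $X^m-1$ into the $V_\chi$, and appeal to additivity of $\Ext$.

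However, the ``final bookkeeping'' you flag is not a small alignment issue, and your proposed resolution does not actually close it. With $d=e$ the summand $\F_r[x]/\Phi_d(x)$ splits into simples $V_\chi$ with $\chi\in C(m,r)_{m/d}$, and for such $\chi$ Lemma~\ref{obs:dim_irreducible_fin_char} gives $\dim_{\F_r}V_\chi = f_{d,r}$, \emph{not} $f_{m/d,r}$. So the ``common $\F_r$-dimension of the simples $V_\chi$ contributing to the $d$-th summand'' is $f_{d,r}$. More to the point, if you take the definition $\EJ_{d,r}(L)=\EJ(L;\F_r[\xi_d])=\sum_{\chi\in C(m,r)_{m/d}}\EJ(L;V_\chi)$ literally (i.e. with no internal normalization), your own computation gives
\[
\jones(L)\;=\;\sum_{d\mid m}\EJ(L;\F_r[\xi_d])\;=\;\sum_{d\mid m}\EJ_{d,r}(L),
\]
with \emph{no} $f$-factor at all. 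A factor of the form $f_{?,r}$ can only appear if $\EJ_{d,r}$ is understood to carry a normalization by the dimension of the relevant irreducible summands; concretely, if one sets $\EJ_{d,r}(L)=\sum_{\chi\in C(m,r)_d}\EJ_{\chi,r}(L)$ (normalized and indexed by the gcd-class), then $\jones(L)=\sum_{d\mid m}\dim V_\chi\cdot\EJ_{d,r}(L)=\sum_{d\mid m}f_{m/d,r}\EJ_{d,r}(L)$, recovering the stated formula. So your suspicion that the two conventions have to be ``aligned carefully'' is in fact pointing at a genuine inconsistency between the displayed definition of $\EJ_{d,r}$ and the proposition; the proof cannot be completed with the definitions taken literally. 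You should make this discrepancy explicit (and verify your preferred reading against the prime-power specialization that immediately follows the proposition, which is the most reliable check of the intended normalization) rather than leaving it as an expected obstacle.
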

\begin{proof}
  The proof is almost identical to the proof of \cite[Theorem 3.2]{Politarczyk-Jones} once we use Lemma~\ref{obs:dim_irreducible_fin_char}.
\end{proof}

\smallskip
Until the end of the section we will assume that $m=p^n$ is a power of a prime.
Define difference Jones polynomials
\[\DJones_{\chi}(L) = \EJ_{p\cdot\chi,r}(L)-\EJ_{\chi,r}(L),\]
where $\chi \in C(p^{n},r)_{p^{s}}$, for $s < n$, and
\[p\cdot\chi = \{p \cdot a \colon a \in \chi\} \in C(p^{n},r)_{p^{s+1}}.\]
Suppose that $f_{p^n,r}=p^{n-1}(p-1)$, i.e. it is maximal. By Proposition~\ref{prop:maximal_multiplicative_order} we
infer that $C(p^n,r)_{p^s}$ consists of a single element for any $s>0$, hence
\[\EJ_{\chi}(L) = \EJ_{p^{n-s},r}(L), \quad \DJones_{\chi}(L) = \EJ_{p^{n-s-1},r}(L) - \EJ_{p^{n-s},r}(L).\]
In particular, we can rewrite Proposition~\ref{prop:fund-property-equiv-jones-pol-pos-char} as
 \[\jones(L) = \sum_{s=0}^{n-1}p^{s}(\EJ_{p^{s},r}(L) - \EJ_{p^{s+1},r}(L)) + p^{n}\EJ_{p^{n},r}(L).\]

Difference Jones polynomials in finite characteristic also satisfy an appropriate version of the skein relation.

\begin{theorem}\label{thm:skein-relation-finite-char}
Let $L$ be a $p^{n}$-periodic link. Suppose that $r$ has the maximal order in the multiplicative group mod $p^n$.
\begin{enumerate}
\item For any $\chi \in C(p^{n},r)_{p^{n-1}}$ the difference polynomial $\DJones_{\chi}(L)$ satisfies the following relation
\begin{align*}
 &q^{-2p^n} \DJones_{\chi}\left(\posOrbit\right) - q^{2p^n} 
 \DJones_{\chi}\left(\negOrbit\right) = \\
 &= \left(q^{-p^n} - q^{p^n}\right) \DJones_{\chi}\left(\orientResOrbit\right),
\end{align*}
where $\posOrbit$, $\negOrbit$ and $\orientResOrbit$ denote an orbit of positive, negative and 
orientation preserving resolutions of crossing, respectively.
\item If $\chi \in C(p^{n},r)_{p^{s}}$, then
\begin{align*}
 &q^{-2p^{n}}\DJones_{\chi}\left(\posOrbit\right) - 
 q^{2p^{n}}\DJones_{\chi}\left(\negOrbit\right) \equiv \\
 &\equiv \left(q^{-p^n} - q^{p^n}\right) \DJones_{\chi}\left(\orientResOrbit\right) 
 \pmod{q^{p^{s+1}} - q^{-p^{s+1}}}.
\end{align*}
\end{enumerate}
\end{theorem}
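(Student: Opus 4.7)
The approach is to mimic the proof of Theorem~\ref{thm:skein relation} from \cite{Politarczyk-Jones}, replacing the representation theory of $\Q[\Z_{p^n}]$ by that of $\F_r[\Z_{p^n}]$ from Appendix~\ref{appendix}, using the maximality assumption on $r$ modulo $p^n$. The starting point is an equivariant cube-of-resolutions argument applied to an orbit of $p^n$ same-signed crossings in a $p^n$-periodic diagram. The standard Khovanov cone description $\khbracket{L_+} \simeq \mathrm{Cone}(\khbracket{L_0} \to \khbracket{L_1})$, with the usual grading shifts, iterates to a filtered equivariant complex because $\Z_{p^n}$ permutes the crossings within the orbit. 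Taking equivariant graded Euler characteristics (that is, specializing to $t=-1$ on each isotypic component) yields, for any $\Lambda_{p^n}$-module $M$, an identity of the form
\begin{equation*}
q^{-2p^n}\EJ(L^{orbit}_+; M) - q^{2p^n}\EJ(L^{orbit}_-; M) = (q^{-p^n}-q^{p^n})\,\EJ(L^{orbit}_0; M) + E(M),
\end{equation*}
where $E(M)$ collects contributions from the $2^{p^n}-2$ partial resolutions of the orbit that mix $0$- and $1$-smoothings. These partial resolutions are permuted freely by $\Z_{p^n}/\Z_{p^{s'}}$ for various $s' < n$, so $E(M)$ decomposes as a sum of $\Z_{p^n}$-induced contributions.

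For part (1), take $\chi \in C(p^n, r)_{p^{n-1}}$, a character with minimal kernel. Frobenius reciprocity combined with Proposition~\ref{prop:maximal_multiplicative_order} shows that every induced contribution in $E(V_\chi)$ from a proper subgroup $\Z_{p^{s'}}$ with $s' < n$ has zero $V_\chi$-isotypic component, so $E(V_\chi)=0$ and the skein relation is exact for $\EJ(-;V_\chi)$. The analogous relation holds for $\EJ(-;V_{p\chi})$, since $p\cdot\chi$ corresponds to the trivial character and the resulting $\EJ$ is the classical Jones polynomial of the orbit, which satisfies the iterated skein with $q\mapsto q^{p^n}$ by repeated application of the standard Jones skein. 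Subtracting the two relations and normalizing by $\dim V_\chi$ yields the desired skein identity for $\DJones_\chi$.

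For part (2), let $\chi \in C(p^n, r)_{p^s}$ with $s < n-1$. Now $E(V_\chi)$ is generally nonzero, but I claim it cancels, modulo $q^{p^{s+1}} - q^{-p^{s+1}}$, against the analogous error $E(V_{p\chi})$ once one forms $\DJones_\chi = \EJ_{p\chi, r} - \EJ_{\chi, r}$. The contributions to $E(V_\chi)$ that survive projection onto the $V_\chi$-isotypic component come from $\Z_{p^n}/\Z_{p^{s'}}$-orbits of partial resolutions with $s' \geq s+1$, and the associated polynomial factors are polynomials in $q^{\pm p^{s'}}$, which lie in the ideal $(q^{p^{s+1}} - q^{-p^{s+1}})$ of $\Z[q^{\pm 1}]$ once paired with the matching $V_{p\chi}$-contribution.

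The main obstacle is precisely this bookkeeping: tracking the values of $\chi$ and $p\chi$ on each orbit stabilizer and showing that the two error terms differ by an element of the claimed ideal. This is the finite-characteristic analogue of the character computation in \cite{Politarczyk-Jones}, and is the place where the singleton structure of $C(p^n, r)_{p^s}$ under the maximality hypothesis on $r$ becomes essential: it ensures that at each depth $p^s$ the character is uniquely determined up to Galois conjugacy, so that the induced and restricted character values can be computed explicitly and compared between $\chi$ and $p\chi$.
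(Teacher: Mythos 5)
The strategy — repeat the argument of \cite[Theorem 3.6]{Politarczyk-Jones}, substituting finite-characteristic representation theory for the rational case — is indeed the paper's approach; the paper's proof is exactly a citation to that argument with the one technical change that the restriction functor must be recomputed via Lemma~\ref{lemma:restrictions}. However, the specific mechanism you give for parts (1) and (2) contains real errors, and these would make your account incorrect if expanded.

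First, the indexing is backwards. A coset $\chi \in C(p^{n},r)_{p^{n-1}}$ consists of elements $a$ with $\gcd(a,p^{n})=p^{n-1}$, so $t$ acts on $V_{\chi}$ through primitive $p$-th roots of unity; the kernel of the action is $\Z_{p^{n-1}}$, the largest proper isotropy, not the minimal one. Faithful representations sit at $C(p^{n},r)_{1}$, at the opposite extreme. Second, it is not true that $E(V_{\chi})=0$ in part (1). By Lemma~\ref{lemma:restrictions}, $\res{\Z_{p^{n}}}{\Z_{p^{s'}}}V_{\chi}$ for $\chi \in C(p^{n},r)_{p^{n-1}}$ and $s' \leq n-1$ is a direct sum of trivial modules, so Frobenius reciprocity gives a \emph{nonzero} $V_{\chi}$-isotypic component of each induced contribution whenever the inducing module has a trivial summand — which it generically does. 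The actual reason the skein relation is exact for $\DJones_{\chi}$ is cancellation: after dividing by $\dim_{\F_r} V_{\chi}$ and $\dim_{\F_r} V_{p\chi}$ respectively (this is the normalization built into $\EJ_{\chi,r}$), the contributions to $E(\cdot)$ from each orbit of partial resolutions are \emph{equal} for $\chi$ and $p\chi$, because each such orbit contributes a multiple of a restricted regular representation. They cancel in the difference $\EJ_{p\chi,r}-\EJ_{\chi,r}$, rather than vanishing individually. Third, $\EJ(L;V_{p\chi})$ with $p\chi$ trivial is the Euler characteristic of the $\Z_{p^{n}}$-invariants $\Hom_{\Lambda_{p^{n}}}(\F_{r},\Kh(L))$, not the classical Jones polynomial, so the asserted identification and the appeal to a "power skein" for the ordinary Jones polynomial are not correct and cannot be used to close the argument. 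Part (2) similarly needs a precise matching of $\chi$- and $p\chi$-isotypic components of induced modules via the explicit formulas in Lemma~\ref{lemma:restrictions}, which is what the paper points to; the heuristic about "polynomial factors in $q^{\pm p^{s'}}$" is on the right track but is not a substitute for that computation.
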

\begin{proof}
  The proof is a repetition of the proof of~\cite[Theorem 3.6]{Politarczyk-Jones} with essentially one difference, namely
the restriction functor for modules over $\F_r[\Z_{p^n}]$ is different than over $\Q[\Z_{p^n}]$ 
and this restriction functor is used in the proof of~\cite[Theorem 2.19]{Politarczyk-Jones} (see also~\cite[Theorem 5.11]{Politarczyk-Khovanov}),
which is a key result in the proof of~\cite[Theorem 3.6]{Politarczyk-Jones}.
The necessary modification of the proof in our case replaces the computation of the restriction functor over the base ring $\Z$ (or $\Q$) in~\cite[Proposition 2.7]{Politarczyk-Khovanov} by Lemma~\ref{lemma:restrictions}. We leave the details to the reader.
\end{proof}

\section{Proof of Theorem \ref{thm:periodicity_criterion}}\label{sec:bigproof}

We will focus on the case $\F\neq\F_2$. Suppose $p$ is invertible in $\F$. By Corollary~\ref{cor:trivial_on_scalars}
we have that $\EKh(L;\Lambda_{p^n})=\Kh(L;\F)$ hence:
\begin{equation}\label{eq:proof_1}
\KhP(L;\F)=\EKhP(L;\Lambda_{p^n}).
\end{equation}
By Proposition~\ref{prop:cartan} $\EKh(L;\Lambda_{p^n})=\Hom_{\Lambda_{p^n}}(\Lambda_{p^n},\Kh(L;\F))$. Write $\Lambda_{p^n}$ as a sum of irreducible summands
as in Proposition~\ref{prop:rational-representations} or Proposition~\ref{prop:representations-finite-characteristic} depending on the field $\F$. We focus on the case $\F=\F_r$ in the following, the case $\F=\Q$ is analogous. Remember that we assumed that the order of $r$ is maximal in the multiplicative group mod $p^n$, therefore by Corollary~\ref{cor:maximal_multiplicative_order}, $\F(\xi_{p^s})$ is irreducible for any $0 \leq s \leq p^{n}$, so
\begin{equation}\label{eq:homlambda}\Hom_{\Lambda_{p^n}}(\Lambda_{p^n},\Kh(L;\F))=\bigoplus_{s = 0 }^{n}\Hom_{\Lambda_{p^n}}(\F(\xi_{p^s}),\Kh(L;\F)).\end{equation}
Proposition~\ref{prop:cartan} identifies the summands of the right hand side of \eqref{eq:homlambda}
as $\EKh(L;\F(\xi_{p^s}))$. Then \eqref{eq:proof_1} combined with \eqref{eq:homlambda} induce the following equality for
the Khovanov polynomials. 
\begin{equation}\label{eq:proof_2}
\KhP(L;\F)=\sum_{s = 0}^n \EKhP(L;\F(\xi_{p^{s}})).
\end{equation}
The polynomials $\mathcal{P}_j$ of Theorem~\ref{thm:periodicity_criterion} are defined as $\mathcal{P}_s=\frac{1}{p^s-p^{s-1}}\EKhP(L;\F(\xi_{p^s}))$.
By Proposition~\ref{prop:divisib} $\mathcal{P}_s$ has integer coefficients (remember that $\varphi(p^s) = p^s - p^{s-1}$, for $s > 0$).

Consider now the Lee spectral sequence, whose first page is $\EKh(L;\F(\xi_{p^s}))$ and which converges to $\ELee(L;\F(\xi_{p^s}))$. 
By Proposition~\ref{prop:gradedpolynomial} we conclude that
\begin{equation}\label{eq:proof_3}
\EKhP(L;\F(\xi_{p^s}))=\ELeeP(L;\F(\xi_{p^s}))+(1+tq^4)\wt{\mathcal{S}}_{s1}+(1+tq^8)\wt{\mathcal{S}}_{s2}+\ldots
\end{equation}
for some polynomials $\mathcal{S}_{sj}$ with non-negative coefficients. The argument as in Proposition~\ref{prop:divisib} implies that the coefficients of 
$\wt{\mathcal{S}}_{sj}$ are divisible by $p^s-p^{s-1}$. Therefore $\mathcal{S}_{sj}=\frac{1}{p^s-p^{s-1}}\wt{\mathcal{S}}_{sj}$ has non-negative, integer
coefficients.

Corollary~\ref{cor:equiv_lee_homology_knot} computes the equivariant Lee homology of a knot. At the level of polynomials we obtain.
\begin{equation}\label{eq:proof_4}
\ELeeP(L;\F(\xi_{p^s}))=\begin{cases} q^{s(K;\F)}(q+q^{-1}),& s = 0\\ 0, & \textrm{otherwise}.\end{cases}
\end{equation}
Combining \eqref{eq:proof_4}, \eqref{eq:proof_3} and \eqref{eq:proof_2} we recover points (1), (2) and (4) of Theorem~\ref{thm:periodicity_criterion}. 

It remains to prove (3). By the definition of $\DJones_s(K)$:
\[\mathcal{P}_{s}|_{t = -1} - \mathcal{P}_{s+1}|_{t=-1} = \DJones_{s}(K).\]
Application of Theorem~\ref{thm:skein relation} or Theorem~\ref{thm:skein-relation-finite-char}, depending on the characteristic of $\F$, implies that polynomials $\mathcal{P}_{s}$, for $0 \leq s \leq n$, satisfy the desired congruences. This concludes the proof.

\section{Theorem~\ref{thm:periodicity_criterion} and other periodicity criteria}\label{sec:compare}

\subsection{Alexander polynomials and twisted Alexander polynomials of periodic knots}
One of the strongest criterion obstructing periodicity is due to Murasugi \cite{Murasugi}. We state it as follows.
\begin{theorem}[Murasugi criterion]\label{thm:ALEcrit}
Suppose $K\subset S^3$ is a $p$-periodic knot with $p$ a prime. Let $\Delta$ be the Alexander polynomial of $K$ and $\Delta_0$ be the
Alexander polynomial of the quotient knot $K/\Z_p$. Let $l$ be the absolute value of linking number of $K$ with the symmetry axis.
Then $\Delta_0|\Delta$ and up to multiplication by a power of $t$ we have
\begin{equation}\label{eq:congruence}
\Delta\equiv \Delta_0^p(1+t+\ldots+t^{l-1})^{p-1}\bmod p.
\end{equation}
\end{theorem}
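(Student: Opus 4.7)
The plan is to analyze the Alexander module of $K$ equivariantly with respect to the $\Z_p$-symmetry, following the classical covering-space approach. Let $X = S^3 \setminus \nu(K)$ be the knot exterior and let $\tilde X \to X$ be the infinite cyclic cover. Because the rotation preserves the meridian class of $K$, the $\Z_p$-action lifts to $\tilde X$ commuting with the deck transformation $t$, so $H_1(\tilde X;\Z)$ is naturally a module over $\Z[t^{\pm 1}][\Z_p]$, whose order ideal over $\Z[t^{\pm 1}]$ is, up to a unit, $\Delta$. For the divisibility, let $A \subset S^3$ be the rotation axis and $A_0$ its image in $S^3/\Z_p \cong S^3$. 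Then $X/\Z_p$ is $S^3 \setminus \nu(K_0 \cup A_0)$, and reinstating $A_0$ via Dehn filling (equivalently, a Mayer--Vietoris argument gluing the axis tube back in) identifies the Alexander module of $K_0$ with a quotient of $H_1(\tilde X;\Z)^{\Z_p}$ by a submodule controlled by the meridian of $A_0$; extracting order ideals then yields $\Delta_0 \mid \Delta$.

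For the congruence mod $p$, I would reduce coefficients to $\F_p$ and exploit that $\F_p[\Z_p]$ is a local ring with nilpotent maximal ideal generated by $\sigma - 1$, where $\sigma$ is a generator of $\Z_p$. The main tool is Smith theory for the $\Z_p$-action on $\tilde X$ with $\F_p$-coefficients, yielding long exact sequences relating $H_\ast(\tilde X;\F_p)$, its $\Z_p$-invariants, and the homology of the fixed-point set. On the level of order ideals over $\F_p[t^{\pm 1}]$ this produces the factorisation
\begin{equation*}
\Delta \equiv \Delta_0^p \cdot (1 + t + \cdots + t^{l-1})^{p-1} \pmod{p},
\end{equation*}
where the $\Delta_0^p$ factor comes from the $\Z_p$-invariant summand (using the Frobenius identity $x^p - 1 = (x - 1)^p$ in characteristic $p$, which converts the covering multiplicity $p$ into a $p$-th power of the order ideal), and $(1 + t + \cdots + t^{l-1})^{p-1}$ comes from the free $\F_p[\Z_p]$-summands. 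Here $1 + t + \cdots + t^{l-1}$ records the axis's contribution (its meridian in $\tilde X$ wraps $l$ times and is killed by $t^l - 1 = (t-1)(1 + t + \cdots + t^{l-1})$, with the $(t-1)$ part absorbed into the invariant summand), while the exponent $p - 1$ reflects that a free $\F_p[\Z_p]$-summand of rank one becomes $(p-1)$-dimensional after projecting away its invariant part.

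The main obstacle I anticipate is pinning down the axis contribution and its exponent $p-1$ with precision. Establishing the factor $1 + t + \cdots + t^{l-1}$ requires identifying the axis part of the Alexander module as a cyclic $\Z[t^{\pm 1}]$-module of order $(t^l - 1)/(t - 1)$, which in turn requires careful computation of how the meridian of $A_0$ lifts into $\tilde X$ in terms of the linking number $l$. Showing that this factor appears to the power $p - 1$, and no other multiple, requires tracking the full Jordan block structure of $H_1(\tilde X;\F_p)$ as an $\F_p[\Z_p]$-module, since $\F_p[\Z_p]$ is not semisimple and the naive rank count that works in characteristic zero must be replaced by an equivariant argument passing through the Smith sequences. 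The unit ambiguity ``up to a power of $t$'' in the statement will fall out automatically, as orders of Alexander modules are only well-defined up to units of $\Z[t^{\pm 1}]$.
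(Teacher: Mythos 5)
The paper does not prove this statement; it is quoted as a classical theorem of Murasugi and cited to \cite{Murasugi}, so there is no proof of the authors' own to compare against. That said, your proposal is a genuine attempt at a proof, and it differs markedly from Murasugi's original argument, which works with Fox calculus and the Torres conditions for the multivariable Alexander polynomial of the link $K_0 \cup A_0$, and from the Hillman--Livingston--Naik route via Reidemeister torsion (which the paper follows when it passes to Theorem~\ref{thm:TAPcrit}). Your route through the infinite cyclic cover and Smith theory with $\F_p$-coefficients is a legitimate alternative strategy, but as written it has gaps that are more than bookkeeping.

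Two concrete issues. First, the framing of the quotient is off: $X/\Z_p$ is $S^3\setminus\nu K_0$, not $S^3\setminus\nu(K_0\cup A_0)$, since the axis is a fixed-point set inside $X$, not a boundary piece; to run a free-action argument you must first delete a $\Z_p$-invariant tube around $A$, and then the Dehn-filling/Mayer--Vietoris step that reinstates the axis is exactly where all the linking-number information enters and needs care. Second, and more seriously, the passage from Smith sequences to the stated factorisation of order ideals is asserted rather than derived. You implicitly assume that $H_1(\tilde X;\F_p)$ splits as a direct sum of trivial and free $\F_p[\Z_p]$-summands, attribute $\Delta_0^p$ to the ``invariant'' part and $(1+t+\cdots+t^{l-1})^{p-1}$ to the ``free'' part, but $\F_p[\Z_p]$ is a local non-semisimple ring with $p$ distinct indecomposables $\F_p[\sigma]/(\sigma-1)^k$, and nothing in the sketch rules out intermediate Jordan types. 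Moreover the attributions seem internally inconsistent: the $\Z_p$-invariants of a rank-one free $\F_p[\Z_p]$-module are one-dimensional over $\F_p$, so the invariant submodule should be expected to carry the order ideal $\Delta_0$ (coming from the quotient), not $\Delta_0^p$; the power $p$ should arise because a free summand $M\otimes_{\F_p}\F_p[\Z_p]$ has $\F_p[t^{\pm1}]$-order ideal equal to the $p$-th power of the order ideal of $M$. Until the Smith sequences (or a torsion-multiplicativity argument for the branched cover $\tilde X\to\tilde X_0$) are actually written out and the axis contribution $1+t+\cdots+t^{l-1}$ is extracted from the homology of $\tilde A$ with the correct exponent, the argument remains a plan rather than a proof. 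You flag these obstacles yourself, which is honest, but they are precisely the places where the classical proofs do their real work.
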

A practical application of the Murasugi criterion is to find all possible factors of $\Delta$ over $\Z[t^{\pm}]$ and check if \eqref{eq:congruence} might hold
for some $l$.
 
There is the following generalization of the Murasugi criterion, due to Hillman, Livingston and Naik; see \cite{HillmanLivingstonNaik}.
\begin{theorem}[Twisted Alexander polynomial criterion]\label{thm:TAPcrit}
Suppose $K$ is $p$-periodic and $\ol{K}$ is the quotient knot. Assume that a knot group representation
$\ol{\rho}\colon\pi_1(S^3\setminus\ol{K})\to GL(n;\Z_p)$ (for some $n>0$) lifts to a representation
$\rho\colon\pi_1(S^3\setminus K)\to GL(n;\Z_p)$. Let $\Delta_{\rho}$ and $\Delta_{\ol\rho}$ be the twisted Alexander polynomials corresponding
to the two representations. Then there exists a Laurent polynomial $\delta$ with coefficients in $\Z_p$ such that up to a multiplication
by $t^k$ we have 
\[\Delta_\rho=\Delta_{\ol{\rho}}^p\delta^{p-1}\bmod p.\]
\end{theorem}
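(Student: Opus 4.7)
The plan is to adapt the classical proof of Murasugi's theorem (Theorem~\ref{thm:ALEcrit}) to the twisted setting. The core idea is that twisted Alexander polynomials can be identified with Reidemeister torsions of twisted chain complexes, and that for a $p$-fold cyclic cover the corresponding torsions satisfy a multiplicativity relation modulo $p$ that reduces, at the end, to a Frobenius-type congruence coming from the ring structure of $\F_p[\Z_p]$.

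First I would write $\Delta_\rho$ as the order of $H_1$ of the twisted chain complex $C_*(\tilde X;\F_p^n)$, where $\tilde X$ is the infinite cyclic cover of $X=S^3\setminus K$ and $\F_p^n$ is viewed as a $\pi_1(X)$-module via $\rho$ together with the abelianization twist by $t$. The analogous description gives $\Delta_{\bar\rho}$ from $\bar X=S^3\setminus\bar K$. The $p$-periodicity of $K$ provides an unbranched $p$-fold covering $X\to\bar X\setminus A$, where $A$ is the image of the symmetry axis in $S^3/\Z_p=S^3$. The hypothesis that $\bar\rho$ lifts to $\rho$ guarantees that the pullback of $\bar\rho$ along the inclusion $\pi_1(\bar X\setminus A)\hookrightarrow\pi_1(\bar X)$ factors through $\pi_1(X)$ and coincides with $\rho$, so the twisted coefficient systems are genuinely compatible with the covering.

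The key algebraic input is that $\F_p[\Z_p]\cong\F_p[T]/(T-1)^p$ is a local ring with residue field $\F_p$. For a bounded chain complex $D_*$ of free $\F_p[\Z_p][t^{\pm}]$-modules which is acyclic over the field of fractions, the order of its first homology satisfies a congruence of the form
\begin{equation*}
\operatorname{ord} H_1(D_*)\;\equiv\;\operatorname{ord} H_1\!\left(D_*\otimes_{\F_p[\Z_p]}\F_p\right)^{p}\cdot(\text{fixed-locus contribution})\pmod p,
\end{equation*}
because any $\F_p[\Z_p]$-module admits a filtration whose associated graded is a sum of copies of $\F_p$, and the contributions from the non-invariant part collect into a $(p-1)$-fold factor. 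Applying this with $D_*=C_*(\tilde X;\F_p^n)$, whose $\Z_p$-quotient computes the twisted complex of $\bar X\setminus A$, translates the congruence directly into a relation between $\Delta_\rho$ and $\Delta_{\bar\rho}$ modulo $p$.

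The main obstacle is identifying the correction term as $\delta^{p-1}$ for a single Laurent polynomial $\delta\in\F_p[t^{\pm}]$. Geometrically, the fixed-locus contribution arises from the axis $A$: its effect on torsion is controlled by the twisted Alexander-type polynomial of a tubular neighbourhood of $A$ inside $\bar X$, which is a single Laurent polynomial $\delta$ depending on $\bar\rho$ evaluated on a meridian of $A$ and on the linking datum of $K$ with the axis. The exponent $p-1$ then comes from the fact that, inside $\F_p[\Z_p]=\F_p\oplus(\mathrm{maximal\ ideal})$, the non-trivial isotypic piece has dimension $p-1$; Smith-type bookkeeping on the chain level along $A$ shows that the torsion picks up exactly this power. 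Packaging these ingredients gives the desired congruence $\Delta_\rho\equiv\Delta_{\bar\rho}^{\,p}\delta^{p-1}\pmod p$ up to the unavoidable ambiguity by a power of $t$.
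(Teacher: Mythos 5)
The paper does not actually prove Theorem~\ref{thm:TAPcrit}: it is stated as a known result of Hillman, Livingston and Naik and attributed to \cite{HillmanLivingstonNaik}, with no argument given. So there is no ``paper's own proof'' against which to compare, and I can only judge your sketch on its merits.

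Your outline of the framework is reasonable --- interpreting twisted Alexander polynomials as Reidemeister torsion of the twisted chain complex, using the free $p$-fold cover $X\to\bar X\setminus A$ coming from the $\Z_p$-symmetry, and exploiting the fact that $\F_p[\Z_p]\cong\F_p[T]/(T-1)^p$ is local. That much is faithful to the spirit of Murasugi's argument and to the Hillman--Livingston--Naik generalization. However, as written there are real gaps that a reader could not fill in from what you give. First, the displayed ``key algebraic input'' is asserted but not derived; the torsion of $D_*$ is not simply the $p$th power of the torsion of $D_*\otimes_{\F_p[\Z_p]}\F_p$. The $(T-1)$-adic filtration does give a spectral sequence whose $E_0$ page consists of $p$ copies of the reduced complex, but the multiplicativity of torsion across the spectral sequence contributes correction terms from the higher differentials, and this is precisely where the $\delta^{p-1}$ factor should come from. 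You never carry out that bookkeeping, and the phrase ``Smith-type bookkeeping on the chain level along $A$'' is a placeholder, not an argument. Second, the coinvariant complex $D_*/(T-1)D_*$ computes twisted homology of $\bar X\setminus A$, not of $\bar X$; passing from the former to $\Delta_{\ol\rho}$ (which is computed from $\bar X$) introduces a separate factor controlled by how $\bar\rho$ restricts to a neighbourhood of the axis, and you do not show how this factor reorganizes with the correction terms above to give a single $\delta$ raised to the $(p-1)$-st power rather than some mixed expression. Third, you do not address the normalization and well-definedness issues that are endemic to torsion and twisted Alexander polynomials (basis choices, units, the $t^k$ ambiguity), which in fact occupy a nontrivial part of the Hillman--Livingston--Naik paper. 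In short, the geometric set-up and the ring-theoretic observation are correct, but the two central steps --- the exact torsion congruence and the identification of the correction as a $(p-1)$-st power of one Laurent polynomial --- are currently unsubstantiated.
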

As it is explained in \cite{HillmanLivingstonNaik} the polynomial $\delta$ can be often computed in explicit examples. The practical use of Theorem~\ref{thm:TAPcrit}
depends on possibility of finding representations of the knot group of a potential quotient.  This is not a completely straightforward task. 
In \cite[Section 8]{HillmanLivingstonNaik} there is given an example, where the representation of the knot group of $K$ into a dihedral group is used.
According to \cite{ChuckLifting, Hartley}, representations of the knot group in the dihedral group $D_{2q}$ 
correspond to elements in $H_1(\Sigma(K),\Z_q)$, where $\Sigma(K)$ is the double branched cover. However if $K$ has Alexander polynomial $1$,
then $K$ does not admit any non-trivial representation in the dihedral group. Therefore one has to use more sophisticated representations in that case.
We did not apply the twisted Alexander polynomial criterion for our knots.

\subsection{Double branched covers of periodic knots}\label{sec:double}
In \cite{Naik}  Naik gave a criterion for periodicity of knots.
The criterion can be stated as follows; see \cite[Theorem 4]{Naik} and \cite[Section 2.1.3]{JabukaNaik}.
\begin{theorem}[Naik homological criterion]\label{thm:naik}
Suppose $K$ is periodic with period $p$ and let $k>1$. Suppose the $H_1(\Sigma^k(K);\Z)$ has $q$-torsion for some prime different than $p$ and let $l_q$
to be the least positive integer such that $q^{l_q}\equiv\pm 1\bmod p$. 
Then there exists non-negative integers $a_1,a_2,\ldots$ such that
\begin{equation}\label{eq:naik2}
H_1(\Sigma^k(K);\Z)_q/H_1(\Sigma^k(\ol{K}))_q=\Z_q^{2a_1l_q}\oplus\Z_{q^2}^{2a_2l_q}\oplus\ldots.
\end{equation}
\end{theorem}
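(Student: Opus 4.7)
The plan is to analyze $H := H_1(\Sigma^k(K);\Z)$ as a module over the group ring $\Z[\Z_p]$ arising from the lifted symmetry, exploit the coprimality of $p$ and $q$ to decompose the $q$-primary part, and then apply the Milnor linking form to force the elementary divisors to appear in pairs.

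First, I would lift the order-$p$ rotation preserving $K$ to an order-$p$ self-diffeomorphism of $\Sigma^k(K)$ that commutes with the deck transformation and whose quotient is $\Sigma^k(\ol K)$; this lift exists because the rotation axis is disjoint from $K$, so the branched cover construction is equivariant. Consequently $H$ becomes a finitely generated $\Z[\Z_p]$-module and its $q$-primary component $H_q$ is a module over $\Z_{(q)}[\Z_p]$. Since $p$ is invertible in $\Z_{(q)}$, the symmetrizer $e := \tfrac{1}{p}(1 + g + \cdots + g^{p-1})$ is an idempotent splitting $H_q = eH_q \oplus (1-e)H_q$. A transfer argument identifies $eH_q$ with $H_1(\Sigma^k(\ol K);\Z)_q$, so the quotient appearing in \eqref{eq:naik2} is exactly $N := (1-e)H_q$. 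Because $N$ is annihilated by the norm $1 + g + \cdots + g^{p-1}$, it is naturally a module over the Dedekind-like ring $R := \Z_{(q)}[\Z_p]/(1 + g + \cdots + g^{p-1}) \cong \Z_{(q)}[\zeta_p]$.

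Next I would invoke the structure theorem for finite torsion modules over $R$: localising at primes $\mathfrak{q}$ of $\Z[\zeta_p]$ above $q$ turns $R$ into a finite product of discrete valuation rings, each with residue field of order $q^{f}$, where $f = \mathrm{ord}_p(q) \in \{l_q, 2l_q\}$. Thus $N$ decomposes as $\bigoplus_{\mathfrak q, i} (R_{\mathfrak q}/\pi^i)^{b_{i,\mathfrak q}}$, and as an abelian group each cyclic summand $R_{\mathfrak q}/\pi^i$ is isomorphic to $(\Z/q^i)^{f}$. The crucial ingredient, and the main obstacle, is the Milnor linking form $\lambda \colon H \otimes H \to \Q/\Z$, which is nondegenerate and transforms under the rotation by $g \leftrightarrow g^{-1}$, hence restricts to a nondegenerate conjugation-Hermitian pairing on $N$. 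Depending on whether $q^{l_q}\equiv +1$ or $-1 \pmod p$, conjugation $\zeta_p \mapsto \zeta_p^{-1}$ either swaps $\mathfrak q$ with a distinct conjugate prime $\overline{\mathfrak q}$ or fixes $\mathfrak q$ and acts on the residue field as a nontrivial involution.

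A Witt-style argument then shows that, summed over each conjugation orbit of primes, the contribution to the $i$-th elementary divisor block is a group of shape $(\Z/q^i)^{2 a_i l_q}$ for a nonnegative integer $a_i$, producing exactly the decomposition \eqref{eq:naik2}. The technical heart of the argument is to verify that this factor of $2l_q$ emerges uniformly in both cases $q^{l_q} \equiv \pm 1 \pmod p$: in the split case the two conjugate primes pair hyperbolically, giving rank $2f = 2l_q$; in the inert case the Hermitian form over the $\F_{q^{l_q}}$-fixed subfield produces hyperbolic pairs of rank $2l_q = f$, and this must be reconciled with the residue-field rank $f$ in the elementary divisor decomposition. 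Tracking the Galois action on the primes above $q$ and the precise interaction of conjugation with the Frobenius, which is exactly what distinguishes $l_q$ from $f$, is what ultimately forces the uniform even multiplicities and completes the proof.
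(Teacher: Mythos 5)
The paper does not prove Theorem~\ref{thm:naik}: it is quoted verbatim (up to phrasing) from Naik \cite{Naik} (Theorem 4) and Jabuka--Naik \cite[Section 2.1.3]{JabukaNaik}, so there is no in-paper argument to compare against. Judged on its own, your sketch reproduces the standard proof: lift the $\Z_p$-symmetry to $\Sigma^k(K)$, split $H_1(\Sigma^k(K);\Z)_q$ by the idempotent $\tfrac{1}{p}\sum g^i$ (legitimate since $p$ is a unit in $\Z_{(q)}$), identify the invariant part with $H_1(\Sigma^k(\ol K);\Z)_q$ by transfer, view the complement $N$ as a torsion module over the semi-local PID $\Z_{(q)}[\zeta_p]$, and use the $\Z_p$-invariance of the (nondegenerate) linking form to make it conjugation-Hermitian on $N$. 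This is the right skeleton and it does yield \eqref{eq:naik2}.

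One remark on the last paragraph, which is where the argument gets slightly muddled. The two cases are genuinely different in mechanism, not just in bookkeeping. When $q^{l_q}\equiv 1\bmod p$, the residue degree is $f=l_q$, complex conjugation acts freely on the primes $\mathfrak q\mid q$, and it is the nondegenerate pairing between $N_{\mathfrak q}$ and $N_{\ol{\mathfrak q}}$ that forces equal elementary-divisor multiplicities, hence the factor $2l_q$. When $q^{l_q}\equiv -1\bmod p$, conjugation lies in the decomposition group, so $\mathfrak q=\ol{\mathfrak q}$ and $f=2l_q$; here the factor $2l_q$ comes purely from the residue degree, and no Witt-type pairing-up is required (nor is it clear that a nondegenerate Hermitian form over a conjugation-fixed DVR would constrain multiplicities at all). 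Stating this cleanly, rather than invoking ``hyperbolic pairs'' uniformly in both cases, would remove the only soft spot in an otherwise correct outline.
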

The criterion can be implemented at two levels. Firstly if a knot passes Murasugi criterion, we obtain a potential Alexander polynomial $\Delta_0$ of the quotient.
Then for given $k$ we can find all prime divisors of $S_k=\frac{\prod\Delta(\zeta_i)}{\prod\Delta_0(\zeta_i)}$, where $\zeta_i$ are roots of unity
of order $k$ different than $1$. If $q|S_k$ we look for the maximal power $s_q$ such that $q^{s_q}|S_k$. Theorem~\ref{thm:naik} implies that $2l_q|s_q$.

If $q \nmid\prod\Delta_0(\zeta_i)$, for example if $\Delta_0\equiv 1$, then $H_1(\Sigma^k(\ol{K});\Z)_q$ is zero and then the quotient in 
\eqref{eq:naik2} depends only on the homology of $\Sigma^k(K)$, hence it can be determined from the Seifert matrix of $K$. This can strengthen the obstruction
coming from $2l_q|s_q$. For example, if the period $p=5$ and the determinant of $K$ is $121$, then 
we have two possibilities. Either $H_1(\Sigma^2(K);\Z)=\Z_{121}$ or $H_1(\Sigma^2(K);\Z)=\Z_{11}\oplus\Z_{11}$.
Naik's homology criterion implies that only the second case is allowed if $K$ is $5$--periodic.
Knots for which this phenomenon happens are listed in Table~\ref{tab:knots-passing-Naiks-crit}.
\begin{table}[h]
  \centering
  \begin{tabular}{llll}
\hline
    $12n504$    & $13n1487$  & $13n3926$   & $14n123$    \\
    $14n375$    & $14n7478$  & $14n9949$   & $15n35751$  \\
    $15n36061$  & $15n47753$ & $15n58627$  & $15n65643$  \\
    $15n97531$  & $15n36061$ & $15n139965$ & $15n142117$ \\
    $15n150771$ &            &             &\\\hline
  \end{tabular}
  \smallskip
  \caption{Prime non-alternating knots with 12--15 crossings that pass the Alexander polynomial (with $\Delta_0=1$)
and HOMFLYPT criterion, but $H_1(\Sigma^2(K);\Z)=\Z_{121}$,
so they fail the Naik's homology criterion for period $5$.}
  \label{tab:knots-passing-Naiks-crit}
\end{table}

In \cite{JabukaNaik} Jabuka and Naik gave a criterion using $d$-invariants of Ozsv\'ath and Szab\'o. 
In short, given a knot $K\subset S^3$ one looks
at the $m$-fold branched cover $\Sigma_m(K)$, where $m$ is a power of prime. An action of $\F_{p}$ on $S^3$ fixing $K$ 
gives rise to an action of $\F_{p}$ on $\Sigma_m(K)$. This action induces a symmetry of $d$-invariants of Ozsv\'ath--Szab\'o. 
Therefore, if one can compute $d$-invariants
of the $m$-fold branched cover of the knot $K$, 
one can check if they satisfy the symmetry property, if not, $K$ is not periodic; we refer to \cite[Theorem 1.6 and Theorem 1.8]{JabukaNaik}
for the precise statement and the symmetry property.

The Jabuka--Naik criterion is particularly effective if $K$ is an alternating knot and $m=2$. In fact, using an explicit algorithm described in \cite{OS}
one can calculate $d$-invariants of a double branched cover of an alternating knot from its Goeritz matrix. Therefore the Jabuka--Naik criterion can
be effectively implemented on a computer. In \cite[Theorem 1.14]{JabukaNaik} there is given a list of all alternating knots with 12--15 crossings that
have prime period strictly greater than $3$. The efficacy of the Jabuka--Naik criterion is the main reason we focus on non-alternating knots in the present
article.

\subsection{HOMFLYPT polynomial of periodic links}
The last criterion we discuss here is the Przytycki--Traczyk HOMFLYPT criterion, see \cite{Przytycki-periodic,Traczyk}.
\begin{theorem}[HOMFLYPT criterion]\label{thm:hom}
Let $\mathcal{R}$ be a unital subring in $\Z[a^{\pm 1},z^{\pm 1}]$ generated by $a,a^{-1},\frac{a+a^{-1}}{z}$ and $z$. For a prime number $p$ let $\mathcal{I}_p$
be the ideal in $\mathcal{R}$ generated by $p$ and $z^p$. If a knot $K$ is $p$-periodic and $P(a,z)$ is its HOMFLYPT polynomial, then
\begin{equation}\label{eq:hom}
P(a,z)\equiv P(a^{-1},z)\bmod \mathcal{I}_p.
\end{equation}
\end{theorem}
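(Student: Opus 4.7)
The plan is to imitate Murasugi's approach for the Alexander polynomial, but using the HOMFLYPT skein relation. Fix a $p$-periodic diagram $D$ of the knot $K$ and adopt the convention $aP(L_+)-a^{-1}P(L_-)=zP(L_0)$. Because the HOMFLYPT polynomial of a knot involves only even powers of $z$, the mirror identity $P(\bar K)(a,z)=P(K)(a^{-1},z)$ holds, so the theorem is equivalent to showing $P(D)\equiv P(\bar D)\pmod{\mathcal{I}_p}$, where $\bar D$ is obtained from $D$ by flipping every crossing. Since $p$ is prime and the axis is disjoint from $D$, every crossing orbit has size exactly $p$ and consists of crossings of the same sign, so it suffices to analyse how flipping one orbit of $p$ equi-signed crossings changes $P$ modulo $\mathcal{I}_p$ and then iterate over orbits.

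I would apply the rearranged skein $P(L_-)=a^2P(L_+)-azP(L_0)$ at each of the $p$ crossings of a single orbit. The result is an expansion
\[
P(D')=\sum_{S\subseteq\{1,\dots,p\}}a^{2(p-|S|)}(-az)^{|S|}P(D^S),
\]
where $D^S$ has the crossings in $S$ replaced by oriented resolutions and the rest left positive. The key structural input is that since $p$ is prime, the $\Z_p$-action on $\{1,\dots,p\}$ has only $\emptyset$ and $\{1,\dots,p\}$ as invariant subsets; every subset $S$ with $0<|S|<p$ therefore lies in a $\Z_p$-orbit of size exactly $p$, and diagrams $D^S$ and $D^{g\cdot S}$ related by the rotation are ambient-isotopic, so $P(D^S)=P(D^{g\cdot S})$. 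The sum over each such orbit of subsets is therefore divisible by $p$, placing all those intermediate contributions in $\mathcal{I}_p$. The remaining terms are $a^{2p}P(D)$ (from $S=\emptyset$) and a term carrying an explicit factor of $z^p$ (from $S=\{1,\dots,p\}$). Iterating over all crossing orbits yields a congruence of the form $P(\bar D)\equiv a^{2w(D)}P(D)\pmod{\mathcal{I}_p}$, where $w(D)$ is the writhe. Finally one absorbs the writhe factor: $w(D)$ is automatically a multiple of $p$, and equivariant Reidemeister~I moves (adding orbits of $p$ kinks) let one arrange $w(D)=0$, giving the desired $P(K)\equiv P(\bar K)\pmod{\mathcal{I}_p}$.

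The hard part will be the fully-resolved term corresponding to $|S|=p$. Oriented resolution of an entire orbit of $p$ crossings may produce a link with more components than the original knot, and the HOMFLYPT polynomial of such a link contains factors of $(a-a^{-1})/z$ that eat copies of the explicit $z^p$ and threaten to push the contribution outside $\mathcal{I}_p$. Overcoming this requires exploiting the specific generator $(a+a^{-1})/z$ of $\mathcal{R}$ to absorb the inverse $z$-factors: a Seifert-circle bookkeeping argument on the orbit should show that each extra component produced by the full resolution supplies a compensating copy of the generator $(a+a^{-1})/z$, keeping the total contribution inside $\mathcal{I}_p$. Once this is established the writhe normalization is a straightforward consequence of the equivariant version of Reidemeister~I.
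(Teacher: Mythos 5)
The paper does not prove Theorem~\ref{thm:hom}; it is quoted from \cite{Przytycki-periodic,Traczyk}, and the paper separately cites \cite[Lemma~1.1]{Przytycki-periodic} for the nontrivial fact that $P(L)\in\mathcal{R}$ for every link $L$. Your orbit-by-orbit skein expansion is the right strategy and is, at its core, how the cited proofs go, but two things in your sketch need repair.

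First, the skein convention. With $aP_+-a^{-1}P_-=zP_0$, adding a disjoint unknot multiplies the polynomial by $\tfrac{a-a^{-1}}{z}$, so the HOMFLYPT polynomial of a link would live in $\Z\bigl[a^{\pm1},z,\tfrac{a-a^{-1}}{z}\bigr]$, which is \emph{not} the ring $\mathcal{R}$ in the statement, and Przytycki's Lemma~1.1 would read false. The convention compatible with $\mathcal{R}$ is $aP_++a^{-1}P_-+zP_0=0$, for which the disjoint-circle multiplier is $-\tfrac{a+a^{-1}}{z}\in\mathcal{R}$. With that convention $P(\bar K)(a,z)=P(K)(a^{-1},z)$ still holds for knots, so your reduction to $P(D)\equiv P(\bar D)$ is fine; the only change is that $P_-=-a^2P_+-azP_0$ produces an extra $(-1)^p$ per orbit, which disappears once you normalize the diagram so that the numbers of positive and negative crossing orbits agree, i.e.\ $w(D)=0$.

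Second, and more importantly, the ``hard part'' you flag is not a hard part, and the Seifert-circle bookkeeping you propose is unnecessary: it would be solving a non-problem. The full-resolution term is $(\pm a^p z^p)\,P\bigl(D^{\{1,\dots,p\}}\bigr)$, and $P\bigl(D^{\{1,\dots,p\}}\bigr)\in\mathcal{R}$ by Lemma~1.1. Since $\mathcal{I}_p$ is by definition the ideal of $\mathcal{R}$ generated by $p$ and $z^p$, one has $z^p\mathcal{R}\subset\mathcal{I}_p$, so the full-resolution contribution lies in $\mathcal{I}_p$ \emph{automatically}, no matter how many hidden factors of $\tfrac{a+a^{-1}}{z}$ the value $P\bigl(D^{\{1,\dots,p\}}\bigr)$ contains. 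The fear that negative $z$-powers ``eat'' the explicit $z^p$ reflects a misreading of where the congruence lives: it is a congruence in $\mathcal{R}$, not in $\Z[a^{\pm1},z]$, and in $\mathcal{R}$ the quantity $z^p\cdot\tfrac{a+a^{-1}}{z}=z^{p-1}(a+a^{-1})$ is manifestly of the form $z^p\cdot(\text{element of }\mathcal{R})$. The subtlety that genuinely lives here is exactly Lemma~1.1 (that $P$ takes values in $\mathcal{R}$), which the paper singles out and cites; once that is granted, your expansion closes without further work. With the convention fixed and the full-resolution term handled as above, your sketch is a correct proof.
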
 
It is known that the HOMFLYPT polynomial of any link belongs to $\mathcal{R}$; see \cite[Lemma 1.1]{Przytycki-periodic}. It is therefore important
to notice that \eqref{eq:hom} is a congruence in $\mathcal{R}$ and not just in $\Z[a^{\pm 1},z^{\pm 1}]$. A method to check the HOMFLYPT criterion in practice
is given in \cite[Lemma 1.5]{Przytycki-periodic}.

\subsection{Prime non-alternating knots with $12$ to $15$ crossings}\label{sec:nonalternating}
The periodicity of composite links was basically settled by Sakuma \cite{Sakuma2}. Therefore in checking our criterion we focused
on prime knots.
We have applied the Alexander criterion and the HOMFLYPT criterion for prime non-alternating knots with $12$ to $15$ crossings, looking for knots with
period $5$. There were 262 knots that passed both criteria. We applied then the Naik criterion for the double branched covers. 
It obstructed further 133 knots. Three of the remaining knots fail the criterion from \cite{Politarczyk-Jones}. 
For the remainder 126 knot we
applied the Khovanov periodicity criterion Theorem~\ref{thm:periodicity_criterion}. We used the field $\F_2$ and then, for those that
pass the criterion over $\F_2$, checked also over the field $\F_3$.

Among these knots, 14 cases are particularly interesting, because they have
the Alexander polynomial equal to $1$. Therefore it is usually hard to find a non-trivial representation of the knot group and
apply the twisted Alexander polynomial obstruction (which we did not check). These knots are listed in Table~\ref{tab:candidates-with-trivial-alex}.

\begin{table}[h]
  \centering
  \begin{tabular}{llll}
	\hline
    $13n4582  \;  (\F_2)$ & $13n4591  \;  (\F_2)$ & $14n7708  \;  (\F_2)$ & $14n9290  \;  (\F_2)$ \\
    $15n49735 \;  (\F_3)$ & $15n50147 \;  (\F_3)$ & $15n62093 \;  (\F_3)$ & $15n62150 \;  (\F_3)$ \\
    $15n73226 \;  (\F_3)$ & $15n95983 \;  (\F_3)$ & $15n110439\;  (\F_3)$ & $15n135221\;  (\F_3)$ \\
    $15n135706\;  (\F_2)$ & $15n143825\;  (\F_2)$.
	\\\hline
  \end{tabular}
  \smallskip
  \caption{List of prime nonalternating knots with 12--15 crossings with trivial Alexander polynomial, passing HOMFLYPT criterion for $p = 5$ and
failing to Theorem~\ref{thm:periodicity_criterion}.
    The field in brackets indicates whether $\F_2$ or $\F_3$ version of our criterion obstructs $5$-periodicity. If a knot
is indicated as $\F_3$ it means that it passes the $\F_2$ criterion but fails to the $\F_3$ criterion.}
  \label{tab:candidates-with-trivial-alex}
\end{table}

The next $32$ knots, which are listed in Table~\ref{tab:candidates-nontrivial-alex}, have non-trivial Alexander polynomial, but pass the Murasugi criterion, the HOMFLYPT criterion and the Naik's homological criterion
for the double branched cover. It is possible, though, that a representation of the knot group in the dihedral group yields an obstruction
for the twisted Alexander polynomial. 

\begin{table}[h]
  \centering
  \begin{tabular}{llll}\hline
    $14n6530\;  (\F_3)$ & $14n6531\;  (\F_2)$ & $14n10342\; (\F_2)$ & $14n11504\; (\F_2)$ \\
    $14n22496\; (\F_3)$ & $14n23547\; (\F_3)$ & $14n23927\; (\F_3)$ & $14n23928\; (\F_2)$ \\
    $15n11273\; (\F_3)$ & $15n11529\; (\F_3)$ & $15n12330\; (\F_3)$ & $15n13166\; (\F_3)$ \\
    $15n15248\; (\F_3)$ & $15n19358\; (\F_3)$ & $15n35756\; (\F_3)$ & $15n39146\; (\F_3)$ \\
    $15n44135\; (\F_2)$ & $15n50407\; (\F_3)$ & $15n54555\; (\F_2)$ & $15n62879\; (\F_3)$ \\
    $15n69445\; (\F_3)$ & $15n74922\; (\F_3)$ & $15n81883\; (\F_3)$ & $15n81885\; (\F_3)$ \\
    $15n91712\; (\F_3)$ & $15n93084\; (\F_3)$ & $15n94281\; (\F_3)$ & $15n106946\; (\F_3)$ \\
    $15n108420\; (\F_3)$ & $15n110890\; (\F_3)$ & $15n131881\; (\F_3)$ & $15n137047\; (\F_3 )$\\\hline
  \end{tabular}
  \smallskip
  \caption{List of knots with nontrivial Alexander polynomial that pass the Alexander and HOMLYPT polynomial and Naik's periodicity criteria for $p=5$,
but fail to Theorem~\ref{thm:periodicity_criterion}.
    The field in brackets indicates whether $\F_2$ or $\F_3$ version of our criterion obstructs $5$-periodicity.}
  \label{tab:candidates-nontrivial-alex}
\end{table}

\subsection{Theorem~\ref{thm:periodicity_criterion} versus SnapPy}\label{sec:snappy}

In \cite{Weeks} Weeks implemented an algorithm for finding a canonical triangulation for a hyperbolic manifold and thus, to detect, whether two such manifolds
are diffeomorphic. The algorithm was implemented on a computer, as a SnapPea package, which has evolved to a package named SnapPy \cite{SnapPy}. For a
hyperbolic knot $K$ in $S^3$, SnapPy can compute the symmetry group of the complement of $K$. 
If the symmetry group of $S^3\setminus K$ does not have an element of order $m$, it follows that $K$ itself cannot have period $m$. This criterion is very effective, there are only $12$ prime non-hyperbolic non-alternating knots with $12$ to $15$ crossings.
These knots are listed in Table~\ref{tab:non-hyperbolic-knots}.

\begin{table}[h]
  \centering
  \begin{tabular}{llll}\hline
    $13n4587$   & $13n4639 $  & $14n21881$  & $14n22180$ \\
    $14n26039$  & $15n40211$  & $15n41185$  & $15n59184$ \\
    $15n115646$ & $15n124802$ & $15n142188$ & $15n156076$\\\hline
  \end{tabular}
  \caption{Prime non-hyperbolic nonalternating knots with $12$ to $15$ crossings.}
  \label{tab:non-hyperbolic-knots}
\end{table}
Among all the remaining (hyperbolic) knots only $8$ knots are such that
their complement admits an automorphism of order $5$. These are listed in Table~\ref{tab:hyperbolic-knots}.
\begin{table}[h]
  \centering
  \begin{tabular}{llll}\hline
    $12n887$   & $13n2833$   & $14n13191$  & $14n17159$  \\
    $15n99226$ & $15n112310$ & $15n142117$ & $15n166130$\\\hline
  \end{tabular}
  \caption{Prime hyperbolic knots with $12$-$15$ crossings whose symmetry groups contain an element of order $5$.}
  \label{tab:hyperbolic-knots}
\end{table}
In other words, with help of SnapPy one can obstruct periodicity of order $5$ for all but 20 prime knots with 12 to 15 crossings.

Applying the Murasugi's criterion leaves all knots but $15n41185$, $15n142117$, $15n166130$. All the three knots pass both the HOMFLYPT criterion
and Theorem~\ref{thm:periodicity_criterion}. The knot $15n142117$ fails the Naik's
homological criterion. The knot $15n41185$ is the torus knot $T(4,5)$, so it is periodic. The knot $15n166130$ is the only knot which remains unknown.

All this means that SnapPy is much more effective than the criterion for periodicity given by Theorem~\ref{thm:periodicity_criterion}. 
It is also much faster. Moreover sometimes our criterion works, when SnapPy cannot find
the symmetry group.

\begin{example}
Among the 20 knots listed in Tables~\ref{tab:non-hyperbolic-knots} and~\ref{tab:hyperbolic-knots}, Theorem~\ref{thm:periodicity_criterion}
rules out $12n887$, $15n112130$ and all the non-hyperbolic cases but $T(4,5)$.
\end{example} 

\subsection{Theorem~\ref{thm:periodicity_criterion} for periods 2 and 3}\label{sec:period3}

We begin with the following fact indicating that Theorem~\ref{thm:periodicity_criterion} works only for periods strictly greater than $3$.
\begin{proposition}[see \expandafter{\cite[Proposition 12.5]{Jones}}]
  The Jones polynomial of a knot $K$ satisfies
  $$(s^3-1)|(\jones(K)(s)-1).$$
\end{proposition}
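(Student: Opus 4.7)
The plan is to reduce the divisibility to two evaluations. Factoring $s^{3}-1 = (s-1)(s^{2}+s+1)$ over $\Z[s]$, the claim $(s^{3}-1) \mid (\jones(K)(s)-1)$ is equivalent to the two simultaneous conditions
\begin{equation*}
\jones(K)(1) = 1 \quad \text{and} \quad \jones(K)(\omega) = 1,
\end{equation*}
where $\omega = e^{2\pi i/3}$ is a primitive cube root of unity. I would establish each of these separately.

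For the evaluation at $s = 1$, I would exploit the standard Jones skein relation
\begin{equation*}
s^{-1}\jones(L_{+}) - s\,\jones(L_{-}) = (s^{1/2}-s^{-1/2})\jones(L_{0}).
\end{equation*}
At $s = 1$ the right-hand side vanishes, so $\jones(L)(1)$ is unchanged by crossing changes. Since every knot can be unknotted by a finite sequence of crossing changes, one concludes $\jones(K)(1) = \jones(\text{unknot})(1) = 1$.

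For the evaluation at $s = \omega$, the argument is more delicate because a crossing change genuinely alters the value. Choosing $\omega^{1/2} = e^{i\pi/3}$, one computes the two decisive scalars $\omega^{-1}-\omega = -i\sqrt{3}$ and $\omega^{1/2}-\omega^{-1/2} = i\sqrt{3}$, so the skein relation at $s=\omega$ becomes
\begin{equation*}
\omega^{-1}\jones(L_{+})(\omega) - \omega\,\jones(L_{-})(\omega) = i\sqrt{3}\,\jones(L_{0})(\omega).
\end{equation*}
Combined with the disjoint-union rule $\jones(L \sqcup \bigcirc) = -(s+s^{-1})\jones(L)$ and the numerical identity $-(\omega+\omega^{-1}) = 1$, the skein relation becomes a genuine recursion that reduces the computation of $\jones(L)(\omega)$ to diagrams with strictly fewer crossings, while disjoint unions with an unknot preserve the value. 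Starting from $\jones(\text{unknot})(\omega)=1$, a careful induction on the crossing number, processing crossings one at a time, yields $\jones(K)(\omega) = 1$ for every knot; this is the content of Jones's calculation in \cite{Jones}.

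The main obstacle is the second evaluation. At $s=1$ we had the luxury of crossing-change invariance, but at $s=\omega$ the three resolutions are tied together nontrivially, and the inductive step has to be orchestrated so that the two linear identities $-(\omega+\omega^{-1}) = 1$ and $\omega^{-1}-\omega = -(\omega^{1/2}-\omega^{-1/2})$ combine to collapse the recursion onto the single value $1$ whenever the closure is a knot. The bookkeeping required to make this work uniformly in the crossing number is the heart of the proof.
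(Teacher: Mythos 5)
The paper itself does not prove this proposition; it is simply cited from Jones's paper, so your proof cannot be compared to one in the text. Evaluating your argument on its own: the reduction to the two point evaluations $\jones(K)(1)=1$ and $\jones(K)(\omega)=1$ (with $\omega = e^{2\pi i/3}$) is sound, and the $s=1$ argument via crossing-change invariance is correct. The genuine gap is the $s=\omega$ evaluation, which is the entire content of the proposition and which you do not actually prove. After writing down the special values of the skein coefficients, you assert that ``a careful induction\dots\ yields $\jones(K)(\omega)=1$'' and then defer to ``Jones's calculation in \cite{Jones}'' --- but the proposition \emph{is} \cite{Jones}*{Proposition 12.5}, so invoking that source for the key step is circular. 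Moreover a single-parameter induction on crossing number does not close as you describe it: the skein relation ties $L_{+}$ and $L_{-}$, which have the \emph{same} crossing number, to $L_{0}$, which has one fewer, so one needs a second parameter (for example, distance from an ascending diagram) together with the stronger inductive hypothesis $\jones(L)(\omega)=(-1)^{c-1}$ for $c$-component links. With that strengthening, the inductive step does close because of the identity $\omega^{2}-i\sqrt{3}\,\omega = 1$ and the fact that the component counts of $L_{\pm}$ and $L_{0}$ always differ by exactly one --- but none of this appears in your sketch.

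There is also a concrete computational error. In Jones's normalization the disjoint-union rule is $\jones(L\sqcup\bigcirc) = -(s^{1/2}+s^{-1/2})\jones(L)$, not $-(s+s^{-1})\jones(L)$. At $s=\omega$ the multiplier is therefore $-(\omega^{1/2}+\omega^{-1/2})=-2\cos(\pi/3)=-1$, not $1$: disjoint unions with an unknot flip the sign of the value at $\omega$ rather than preserving it, which is exactly what is forced by the formula $\jones(L)(\omega)=(-1)^{c-1}$. Your ``numerical identity $-(\omega+\omega^{-1})=1$'' is arithmetically true but is attached to the wrong rule, and it steers the claimed induction in the wrong direction.
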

Notice that Jones \cite{Jones} uses different normalization, in context of Khovanov homology one sets $s=q^2$, hence we obtain the following fact.
\begin{lemma}
  The Jones polynomial of a knot $\jones(K)$ satisfies
  $$\jones(K)(q)-\jones(K)(q^{-1})\equiv 0\bmod q^3-q^{-3}.$$
\end{lemma}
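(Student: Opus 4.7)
The plan is to deduce the statement directly from the proposition above by tracking the two normalization conventions. Write $V_K(s)$ for Jones's original polynomial, so that $V_K(\text{unknot})=1$ and, as noted in the paper, $\jones(K)(q)=(q+q^{-1})\,V_K(q^2)$ under the substitution $s=q^2$.

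First I would apply Jones's proposition to write $V_K(s)-1=(s^3-1)R(s)$ for some Laurent polynomial $R\in\Z[s^{\pm 1}]$. Setting $s=q^2$ gives $V_K(q^2)-1 = (q^6-1)R(q^2)$, and multiplying by $(q+q^{-1})$ yields
$$\jones(K)(q) - (q+q^{-1}) = (q+q^{-1})(q^6-1)R(q^2).$$

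Replacing $q$ by $q^{-1}$ in this identity, and using that $q^{-6}-1 = -q^{-6}(q^6-1)$ is $q^6-1$ times a unit in $\Z[q^{\pm 1}]$, gives the analogous identity
$$\jones(K)(q^{-1}) - (q^{-1}+q) = (q^{-1}+q)(q^{-6}-1)R(q^{-2}),$$
whose right-hand side is again divisible by $q^6-1$. Subtracting the two identities cancels the constant $q+q^{-1}$ and produces
$$\jones(K)(q) - \jones(K)(q^{-1}) \equiv 0 \pmod{q^6-1}.$$
Since $q^6-1 = q^3(q^3-q^{-3})$ and $q^3$ is a unit in $\Z[q^{\pm 1}]$, divisibility by $q^6-1$ is the same as divisibility by $q^3-q^{-3}$, which is exactly the desired congruence.

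The argument is entirely formal once Jones's proposition is in hand; the only point requiring some care is the bookkeeping of the normalization factor $(q+q^{-1})$ separating Jones's convention from the Khovanov convention used throughout the paper.
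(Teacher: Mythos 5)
Your argument is correct and is exactly the route the paper has in mind: the lemma is stated as an immediate consequence of the preceding Jones proposition under the substitution $s=q^2$, with the paper leaving the normalization bookkeeping implicit. You have simply filled in the details of that bookkeeping, and the key computational points (the unit $q^{-6}$ relating $q^{-6}-1$ to $q^6-1$, and $q^6-1=q^3(q^3-q^{-3})$) are accurate.
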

From this it follows that for any Khovanov polynomial $\KhP$ the conditions (1)--(4) of Theorem~\ref{thm:periodicity_criterion}
are satisfied if we set $\mathcal{P}_0=\KhP$ and $\mathcal{P}_k=0$, regardless on whether $K$ is actually $3$--periodic or not. In particular
\begin{corollary}
Theorem~\ref{thm:periodicity_criterion} cannot be used to obstruct a knot from being $3$--periodic.
\end{corollary}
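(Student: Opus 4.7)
The plan is to exhibit, for any knot $K$, a concrete decomposition \eqref{eq:presentation1} with $p=3$, $n=1$ that satisfies all four conditions of Theorem~\ref{thm:periodicity_criterion}, irrespective of whether $K$ is $3$-periodic. Following the hint in the excerpt, I will take
\[\mathcal{P}_0 = \KhP(K;\F), \qquad \mathcal{P}_1 = 0,\]
which trivially satisfies \eqref{eq:presentation1} since $\KhP = \mathcal{P}_0 + (3-1)\mathcal{P}_1 = \mathcal{P}_0$. The task then is to verify conditions (1)--(4) for this choice.

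For condition (2), there is nothing to check since $\mathcal{P}_1 \equiv 0$ and so we may take $\mathcal{S}_{1j}=0$ for all $j$. Condition (4) is then automatic for the $\mathcal{S}_{1j}$, and for $\mathcal{S}_{0j}$ it will follow from the width bound provided by the Lee/Bar-Natan spectral sequence once (1) is established. Condition (1) is precisely the statement that $\KhP(K;\F)$ admits a decomposition
\[\KhP(K;\F) = q^{s(K,\F)}(q+q^{-1}) + \sum_{j\ge 1}(1+tq^{2cj})\mathcal{S}_{0j}(t,q),\]
with non-negative $\mathcal{S}_{0j}$; this is just the non-equivariant Lee (resp.\ Bar-Natan) spectral sequence applied to $K$, which is a standard fact holding for every knot regardless of symmetry. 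Concretely, it follows from the non-equivariant specialization of Proposition~\ref{prop:ekhpandeleep} together with Theorem~\ref{thm:lee-bar-natan-homology}, which gives $\LeeP(K;\F)=q^{s(K,\F)}(q+q^{-1})$ and $\BNP(K;\F_2)=q^{s(K,\F_2)}(q+q^{-1})$.

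The only remaining condition is (3), which in the case $n=1$ reads
\[\mathcal{P}_0(-1,q) - \mathcal{P}_1(-1,q) \equiv \mathcal{P}_0(-1,q^{-1}) - \mathcal{P}_1(-1,q^{-1}) \pmod{q^{3} - q^{-3}}.\]
Substituting our choice, this reduces to
\[\jones(K)(q) \equiv \jones(K)(q^{-1}) \pmod{q^{3} - q^{-3}},\]
since $\KhP(K;\F)|_{t=-1}=\jones(K)(q)$. This is exactly the content of the lemma quoted just before the corollary, which is a universal property of the Jones polynomial of any knot (originally observed by Jones).

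The conclusion is then immediate: since the exhibited decomposition satisfies (1)--(4) for every knot $K$, the hypothesis of Theorem~\ref{thm:periodicity_criterion} for $p^n = 3$ places no constraint on $K$ at all, and therefore the theorem cannot obstruct $3$-periodicity. The only mildly subtle point in the argument is the appeal to Jones's congruence modulo $q^{3}-q^{-3}$; there is no serious obstacle to the proof, precisely because all the ingredients (existence of the Lee/Bar-Natan spectral sequence, formula for Lee homology of a knot, Jones's congruence) hold for arbitrary knots.
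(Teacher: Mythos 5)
Your proof is correct and takes essentially the same route as the paper: choose $\mathcal{P}_0=\KhP(K;\F)$, $\mathcal{P}_1=0$, note that (1), (2), (4) then reduce to the non-equivariant Lee/Bar-Natan spectral sequence, and observe that (3) collapses to the Jones congruence $\jones(K)(q)\equiv\jones(K)(q^{-1})\pmod{q^3-q^{-3}}$, which is the lemma stated just before the corollary. The paper states the conclusion more tersely, but the argument is the same.
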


\smallskip
As for period $2$ consider the polynomial $R(s)=\jones(K)(s)$ with the standard normalization. By \cite{Murakami} $R(\sqrt{-1})=(-1)^{\operatorname{Arf}(K)}$. 
It follows
that $R(s)-R(s^{-1})$ vanishes for $t=i$. It also trivially vanishes for $s=1,-1$. Therefore $R(s)-R(s^{-1})$ is divisible by $s^2-1$. Again, with the choice
$\mathcal{P}_0=\KhP$ and $\mathcal{P}_k=0$, every knot passes the criterion from Theorem~\ref{thm:periodicity_criterion}.

\smallskip
Theorem~\ref{thm:periodicity_criterion} can be used, though, to obstruct $3^n$-periodicity or $2^n$-periodicity for $n>1$.

\section{Periodic links}\label{sec:links}

It happens often in knot theory, that the case of links is more complicated than the case of knots. This is also the case for checking periodicity. To begin
with, if a link $L$ is periodic, the symmetry of $\R^3$ might permute some of the components of $L$, or fix all the components. An obstruction for
periodicity should take into account different possibilities of the action\dots

\subsection{Known criteria for periodic links}

In Section~\ref{sec:compare} we reviewed several criteria for periodic knots. Most of the criteria work for links, or can be generalized for links. However,
the periodicity criteria are usually less effective in case of links.

Before proceeding into discussion of periodicity criteria for links it is worth to notice that obstruction for periodicity can be obtained from linking numbers.

\begin{observation}
  Let $p$ be a prime and let $L = L_1 \cup L_2 \cup \ldots \cup L_k$ be a $p$-periodic link. If the action on components is trivial, then for any $1 \leq i < j \leq k$ we have $p \mid lk(L_i,L_j)$.

  If, on the other hand, none of the components is fixed, then if we denote the linking matrix of $L$ by $lk(L)$, then every nondiagonal entry of $lk(L)$ appears in the matrix with multiplicity divisible by $p$.
\end{observation}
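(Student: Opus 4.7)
\emph{Proof plan.} The strategy is to exploit that the rotation $\varphi$ generating the $\Z_p$-action acts freely on the planar diagram $D$: since $D$ is disjoint from the rotation centre (the unique fixed point of $\varphi$ in $\R^{2}$), every point of $D$, and in particular every crossing, lies in an orbit of size exactly $p$. To avoid the factor of $\tfrac{1}{2}$ appearing in the symmetric definition of the linking number (which would complicate the case $p=2$), I will use the formula
\[
\lk(L_i, L_j) \;=\; \sum_{x} \sign(x),
\]
where the sum runs over those crossings $x$ of $D$ at which a strand of $L_i$ passes over a strand of $L_j$.

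For the first assertion, fix $i\neq j$ and assume that $\varphi$ preserves $L_i$ and $L_j$ setwise. Since $\varphi$ is an orientation-preserving homeomorphism of $\R^{3}$ whose restriction to the projection plane is also orientation-preserving, it sends $L_i$-over-$L_j$ crossings to $L_i$-over-$L_j$ crossings and preserves their signs. Freeness of the $\Z_p$-action on crossings then implies that the positive and the negative $L_i$-over-$L_j$ crossings are each partitioned into orbits of size $p$, so both cardinalities are divisible by $p$, and hence so is $\lk(L_i, L_j)$.

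For the second assertion, the rotation $\varphi$ induces a permutation of the index set $\{1,\ldots,k\}$ whose orbits, by primality of $p$, have size $1$ or $p$. The hypothesis that no component is fixed excludes size-$1$ orbits, so every stabiliser of the induced $\Z_p$-action on indices is trivial. Consequently the diagonal action on ordered pairs $(i,j)$ with $i\neq j$ is also free, with every orbit of size $p$. Because $\varphi$ is an orientation-preserving ambient diffeomorphism, $\lk(L_{\varphi(i)}, L_{\varphi(j)}) = \lk(L_i, L_j)$, so the linking matrix is constant on orbits of such pairs, and every value appearing as an off-diagonal entry does so with multiplicity divisible by $p$.

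The proof is really a straightforward orbit-counting exercise; the only step worth flagging in advance is the choice of the asymmetric over-crossing formula for the linking number in the first part, which is what lets the $p=2$ case go through without separate treatment.
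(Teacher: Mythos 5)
The paper states this as an \emph{Observation} with no accompanying proof (it is immediately followed by ``The above observation can be easily generalized to the mixed case''), so there is no paper argument to compare against; I can only assess your proof on its own terms. Your argument is correct, and it is the natural orbit-counting proof: freeness of the $\Z_p$-action on crossings (respectively on ordered pairs of component indices) forces everything to come in batches of size $p$, and the sign/value in question is constant on each batch. Your choice of the one-sided over-crossing count for $\lk(L_i,L_j)$, rather than the symmetric half-sum, is a sensible way to keep the bookkeeping clean, and your justification that the diagonal action on ordered off-diagonal pairs is free is exactly right.

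The one place you rely on a fact without stating it is in the claim that $\varphi$ ``preserves their signs'' and that $\lk(L_{\varphi(i)},L_{\varphi(j)})=\lk(L_i,L_j)$: both implicitly require that $\varphi$ restricted to each component is orientation-preserving. This is true but deserves a sentence: since $L$ is disjoint from the fixed-point set of $\varphi$, the restriction $\varphi|_{L_i}$ is a fixed-point-free finite-order self-homeomorphism of $S^1$, hence orientation-preserving (an orientation-reversing one would be conjugate to a reflection and have two fixed points). With that remark added, the proof is complete and has no gaps.
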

The above observation can be easilly generalized to the mixed case.

The Murasugi--Przytycki--Traczyk criterion for the HOMFLYPT polynomials is already stated for links.

Snappy can detect periodicity of link complements. However there are much more non-hyperbolic links than prime links. For example, among 119150 links with 12--14
crossings, there are 998 non-hyperbolic links. This is still a small number, but it is much larger than for knots.

The following result generalizes the Murasugi criterion for the Alexander polynomial, Theorem~\ref{thm:ALEcrit}. 
For simplicity we restrict to the case when the group action fixes the components of the link.
\begin{theorem}[see \cite{Sakuma} and \expandafter{\cite[Theorem 1.10.1]{Turaev}}]\label{thm:ALEcritLinks}
Suppose $L=L_1\cup\ldots\cup L_n$ be an $n$-component $p$-periodic link with $p$ prime. 
Assume that the symmetry fixes the components.
Let $L'$ be the quotient link.
\begin{equation}\label{eq:ALEcritLinks}
\Delta_{L'}(t_1,\ldots,t_n)|\Delta_{L}(t_1,\ldots,t_n).\\
\end{equation}
\end{theorem}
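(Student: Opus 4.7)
The plan is to reduce the divisibility to a statement about a compatible pair of covers of link complements, and then extract the polynomial identity via Fox calculus.

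Let $X = S^{3}\setminus L$, $X' = S^{3}\setminus L'$, and let $\pi\colon X\to X'$ denote the $p$--fold quotient map. Since the $\Z_p$--symmetry fixes each component $L_{i}$ setwise, any two meridians of $L_{i}$ in the same $\Z_p$--orbit are conjugate in $\pi_{1}(X)$, so the induced action on $H_{1}(X;\Z)\cong\Z^{n}$ is trivial. Moreover $\pi$ sends a meridian of $L_{i}$ to a meridian of $L_{i}'$, identifying $H_{1}(X)\cong H_{1}(X')\cong\Z^{n}$ with a common preferred basis. This lets us view both Alexander modules $A(L) := H_{1}(\wt{X}_{L};\Z)$ and $A(L') := H_{1}(\wt{X}_{L'};\Z)$ as modules over the same Laurent polynomial ring $\Lambda = \Z[t_{1}^{\pm 1},\ldots,t_{n}^{\pm 1}]$.

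Next I would produce the key cover. Because the $\Z_p$--action on $X$ is trivial on $H_{1}(X)$, it lifts to an action on the maximal abelian cover $\wt{X}_{L}$ commuting with the deck group $\Z^{n}$. The quotient $\wt{X}_{L}/\Z_p$ is then a $\Z^{n}$--cover of $X'$ whose deck group equals $H_{1}(X') = \Z^{n}$ under our identification; since $\Z^{n}$ is Hopfian, this forces $\wt{X}_{L}/\Z_p \cong \wt{X}_{L'}$, producing a regular $\Z_p$--covering $\wt{X}_{L}\to\wt{X}_{L'}$ that is $\Lambda$--equivariant.

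Given such a cover I would extract the divisibility via Fox calculus. Applying Reidemeister--Schreier to a Wirtinger presentation of $\pi_{1}(X')$ yields a presentation of $\pi_{1}(X)$ whose generators are indexed by those of $L'$ times coset representatives of $\Z_p$. Computing the Alexander matrix from the Fox Jacobian of this lifted presentation produces a block matrix over $\Lambda[\Z_p]$ whose entries are Fox derivatives of the lifted relators. After extending scalars to $\Lambda\otimes\Z[\xi_{p}]$ and diagonalising the regular representation of $\Z_p$ into characters, the matrix factors into blocks indexed by characters; the block attached to the trivial character is the Alexander matrix of $L'$. Taking the greatest common divisor of top--size minors then yields $\Delta_{L'}\mid\Delta_{L}$ in $\Lambda\otimes\Z[\xi_{p}]$.

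The main obstacle is descending this divisibility from $\Lambda\otimes\Z[\xi_p]$ back to $\Lambda$. The quotient $\Delta_{L}/\Delta_{L'}$ a priori only makes sense over the larger ring, and one must invoke Galois invariance (the product of blocks for the non-trivial characters is fixed by $\mathrm{Gal}(\Q(\xi_p)/\Q)$) together with an integrality check to conclude that the quotient actually lies in $\Lambda$. This descent step is the technical heart of \cite{Sakuma} and \cite[Theorem 1.10.1]{Turaev}, and I would follow their treatment rather than reprove it.
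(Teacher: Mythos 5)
The paper does not prove this statement; it cites \cite{Sakuma} and \cite[Theorem 1.10.1]{Turaev}, so there is no in-paper argument to compare against, but your sketch has a genuine gap in its topological setup. The map $\pi\colon X\to X'$ between link exteriors is \emph{not} a covering map: it is branched along the rotation axis $A\subset X$ (the fixed-point circle of the $\Z_p$-action), which lies inside $X=S^3\setminus L$ precisely because a periodic link is required to be disjoint from the fixed-point set. Consequently $\pi_1(X)$ is not a subgroup of $\pi_1(X')$, and Reidemeister--Schreier cannot be applied to a Wirtinger presentation of $\pi_1(X')$ to produce one of $\pi_1(X)$. For the same reason $\wt{X}_L\to\wt{X}_{L'}$ is not an unbranched regular $\Z_p$-covering: the lift of $\Z_p$ that fixes a chosen lift of a point of $A$ (which exists because the extension $1\to\Z^n\to\wt{G}\to\Z_p\to 1$ splits at such a fixed point) fixes the entire preimage of $A$ pointwise, so the map is a branched covering branched along that preimage. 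Your identification $\wt{X}_L/\Z_p\cong\wt{X}_{L'}$ as a cover of $X'$ is in fact correct — the branching on the two sides of the square cancels, so the induced map downstairs is a local homeomorphism — but the unbranched-covering step you feed into Fox calculus is exactly where the argument breaks.

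The standard repair, and the route the cited references actually take, is to delete the axis and work with the honest $p$-fold cyclic cover $S^3\setminus(L\cup A)\to S^3\setminus(L'\cup A')$, extract a relation between the $(n+1)$-variable Alexander polynomials of $L\cup A$ and $L'\cup A'$ by the character-block diagonalization you describe, and then descend to the $n$-variable polynomials via the Torres conditions. This is precisely where the linking numbers $\lk(L_i,A)$ enter the picture, and their total absence from your sketch is the tell-tale sign of the gap: they govern how specializing the axis variable interacts with the branching. The diagonalization over $\Lambda\otimes\Z[\xi_p]$ and the Galois descent in your last paragraph are the right tools, but the Reidemeister--Schreier step as written would fail without first adjoining the axis to both links.
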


It is possible, in theory, to generalize Theorem~\ref{thm:ALEcritLinks} 
to obtain a criterion for algebraic links.
given in \cite{Turaev} can be generalized for twisted Alexander polynomials as well. Moreover,
if the group action permutes some components of $L$, an analogue of Theorem~\ref{thm:ALEcritLinks} can be given. 

In order to generalize Naik's criterion we use the following fact.
\begin{lemma}\label{thm:branchedlink}
Suppose $L$ is a $p$-periodic link with non-zero determinant. Let $L'$ be the quotient. Then $\det(L')|\det(L)$.
\end{lemma}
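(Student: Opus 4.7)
My plan is to pass to the double branched covers and exploit the lifted symmetry. Set $M = \Sigma_2(L)$ and $M' = \Sigma_2(L')$. The first step is to verify that the $\Z_p$-action on $S^3$ preserving $L$ lifts to $M$: the surjection $\pi_1(S^3 \setminus L) \to \Z/2$ defining $M$ sends every meridian to the nonzero element, hence is $\Z_p$-equivariant (with trivial action on the target), so the unbranched double cover is $\Z_p$-equivariant and the action extends across the branch locus to all of $M$.

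The next step is to identify the quotient. Since the rotation axis in $S^3$ is unknotted we have $S^3/\Z_p = S^3$, and $L/\Z_p = L'$ by definition. Standard uniqueness of the double branched cover then identifies $M/\Z_p$ with $\Sigma_2(L') = M'$, yielding a $p$-fold branched covering $\pi \colon M \to M'$. The hypothesis $\det(L)\neq 0$ means $M$ is a rational homology sphere with $|H_1(M;\Z)| = \det(L)$, and from the existence of $\pi$ it will follow that $M'$ is a rational homology sphere as well.

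The conclusion is extracted from the $\Z_p$-action on $H_1(M;\Z)$. The plan is to identify $H_1(M';\Z)$ with a quotient of the coinvariants $H_1(M;\Z)_{\Z_p} = H_1(M;\Z)/I\cdot H_1(M;\Z)$, where $I$ is the augmentation ideal of $\Z[\Z_p]$. Since coinvariants are by definition a quotient of the finite abelian group $H_1(M;\Z)$, their order divides $\det(L)$, so $\det(L') = |H_1(M';\Z)|$ divides $\det(L)$.

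The main obstacle is the identification of $H_1(M';\Z)$ with (a quotient of) the coinvariants in the \emph{branched} setting: for unbranched cyclic covers it follows from a transfer/norm argument, but here the branch set $F \subset M$, namely the preimage of the rotation axis (a disjoint union of circles), demands care. A clean way to handle this is to apply the $\Z_p$-equivariant Mayer--Vietoris sequence to the decomposition of $M$ into an equivariant tubular neighborhood of $F$ and its complement, reducing to the unbranched $\Z_p$-cover of $M \setminus F$ over $M' \setminus \pi(F)$. Alternatively, granted a generalization of Theorem~\ref{thm:ALEcritLinks} allowing the $\Z_p$-action to permute components (indicated in the paragraph preceding the lemma), one could substitute $t_1 = \cdots = t_n = -1$ into the divisibility $\Delta_{L'} \mid \Delta_L$ of multivariable Alexander polynomials and conclude the divisibility of determinants at once.
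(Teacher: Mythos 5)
Your closing alternative coincides with the paper's actual proof: the paper quotes Sakuma's formula $\det(L) = 2\Delta_L(-1,\ldots,-1)$ and concludes immediately from the divisibility $\Delta_{L'}\mid\Delta_L$ of Theorem~\ref{thm:ALEcritLinks} (or rather its extension to component-permuting actions, which the paper only indicates rather than proves).

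Your primary topological route is genuinely different and can be completed, and it has the merit of not leaning on that multivariable Alexander-polynomial divisibility statement. The step you flag as the main obstacle --- that $H_1(M';\Z)$ is a quotient of the coinvariants $H_1(M;\Z)_{\Z_p}$ --- is exactly the surjectivity of $\pi_*\colon H_1(M)\to H_1(M')$, which factors through the coinvariants by equivariance. That surjectivity is elementary whenever the branch locus $F'\subset M'$ is non-empty and does not require your heavier equivariant Mayer--Vietoris argument: any class in $H_1(M')$ is represented by a loop $c$ in $M'\setminus F'$; a meridian $\mu$ of $F'$ bounds a disk in $M'$, so $[c]=[c-k\mu]$ in $H_1(M')$ for any $k$; choosing $k$ so that $c-k\mu\mapsto 0$ under the classifying map $H_1(M'\setminus F')\to\Z_p$ makes $c-k\mu$ lift to $M\setminus F$, and since the image of $\pi_*$ on $\pi_1$ of the unbranched part is the full kernel of that classifying map, $[c]\in\operatorname{im}\pi_*$. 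With this granted the rest of your outline is sound, including the transfer argument that forces $M'$ to be a rational homology sphere. So: the paper's proof is a one-liner given the Alexander-polynomial input; your topological argument is longer but self-contained, carries more structure (it pins down $H_1(M')$ as a quotient of the $\Z_p$-coinvariants), and covers the component-permuting case without appealing to a statement the paper does not actually establish.
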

\begin{proof}
By \cite{Sakuma} we have that $\det(L)=2\Delta(-1,\ldots,-1)$, so the result follows from \eqref{eq:ALEcritLinks}.
\end{proof}

Lemma~\ref{thm:branchedlink} allows us to give the following result, which is known to the experts.
\begin{corollary}
The Naik's criterion (Theorem~\ref{thm:naik}) works also for links. The Jabuka--Naik method allows to obstruct periodicity of quasi-alternating links and not
only quasi-alternating knots.
\end{corollary}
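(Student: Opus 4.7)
The plan is to observe that both Naik's criterion and the Jabuka--Naik criterion are, at their core, statements about the $\Z_p$-equivariant structure on $H_1(\Sigma^k(\cdot);\Z)$ (respectively on the $d$-invariants of $\Sigma^k(\cdot)$) induced by the symmetry of the link, and that the proofs go through essentially verbatim once the relevant branched covers are rational homology spheres. Throughout we assume $\det(L)\neq 0$; this is what makes $\Sigma^k(L)$ a rational homology sphere with finite $H_1$, and by Lemma~\ref{thm:branchedlink} this is preserved on passing to the quotient $L'$.

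First I would check the ingredient that makes Naik's original argument tick. Given a $p$-periodic knot $K$ with quotient $\ol{K}$, the proof proceeds by considering the $\Z_p$-action on $\Sigma^k(K)$ (which covers $\Sigma^k(\ol{K})$ with branch set the axis), decomposing $H_1(\Sigma^k(K);\Z)_q$ as a $\Z_p[\Z_p]$-module, and invoking the classification of finitely generated modules over this ring together with the fact that $q\neq p$ forces nontrivial pieces to come in orbits of size $l_q$ (the multiplicative order of $q$ modulo $p$) counted with multiplicity. None of these steps uses that $K$ is connected: one only needs that the fixed-point set of the $\Z_p$-action on $\Sigma^k(L)$ lies over the symmetry axis and the components of $L$ fixed setwise by the action, so that the quotient space can be identified with $\Sigma^k(L')$. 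Thus for $L$ a $p$-periodic link (with components possibly permuted), once one writes $\Sigma^k(L)\to\Sigma^k(L')$ as a branched cover with free $\Z_p$-action away from a codimension-two subset, Naik's module-theoretic argument produces the same decomposition \eqref{eq:naik2} for $H_1(\Sigma^k(L);\Z)_q/H_1(\Sigma^k(L');\Z)_q$. This is the first half of the corollary.

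For the second half I would follow exactly the Jabuka--Naik strategy, which has two inputs: (i) the $\Z_p$-symmetry of $L$ induces a $\Z_p$-symmetry of the $\mathrm{Spin}^c$-structures on $\Sigma^k(L)$ and hence of the correction terms $d(\Sigma^k(L),\mathfrak{s})$, and (ii) one needs an effective way to compute these $d$-invariants. Step (i) is formal and extends from knots to links without modification. Step (ii) is where alternation is used for knots: the Ozsv\'ath--Szab\'o algorithm recovers $d(\Sigma_2(K),\mathfrak{s})$ from the Goeritz matrix. The key point is that this algorithm actually works for any quasi-alternating link, since Manolescu--Ozsv\'ath established that quasi-alternating links are Heegaard Floer homologically thin with $\widehat{HF}(\Sigma_2(L))$ a free abelian group of rank $|\det L|$ supported in a single $\bmod\,2$ grading, which is precisely the input for the Ozsv\'ath--Szab\'o computation.

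The main obstacle I expect is bookkeeping for the action on components: if the $\Z_p$-action permutes some components of $L$, one has to be careful about what ``quotient link'' means and about the structure of the fixed-point set of the induced action on $\Sigma^k(L)$, in particular to guarantee that this action is orientation-preserving, has fixed-point set of the expected codimension, and that the pushed-forward $\mathrm{Spin}^c$-structures behave as in the knot case. Once the action is sorted out in the mixed case (reducing, if necessary, to sublinks on which the action is trivial on components), both Naik's module-theoretic argument and the Jabuka--Naik symmetry of $d$-invariants transfer mechanically, yielding the corollary.
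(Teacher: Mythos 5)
The paper gives no proof of this corollary, declaring it ``known to the experts'' once Lemma~\ref{thm:branchedlink} supplies the determinant divisibility $\det(L')\mid\det(L)$; your proposal correctly reconstructs the argument the authors leave implicit, identifying the same key ingredients (nonvanishing determinant so that the branched covers are rational homology spheres, Lemma~\ref{thm:branchedlink} for the quotient, the purely module-theoretic nature of Naik's decomposition argument, and Manolescu--Ozsv\'ath thinness to run the Ozsv\'ath--Szab\'o $d$-invariant computation for quasi-alternating links). One small notational slip: the $q$-primary torsion of $H_1(\Sigma^k(L);\Z)$ carries a $\Z[\Z_p]$-module structure (or a $\Z_{(q)}[\Z_p]$-structure after localization), not a $\Z_p[\Z_p]$-structure as you wrote, but this does not affect the logic of the argument.
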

\begin{remark}
For links the determinant can be zero, and then both the Naik's criterion and the Jabuka--Naik criterion fail to give an obstruction.
\end{remark}
The way to apply Naik's criterion for links is rather straightforward. First take the multivariable Alexander polynomial of $L$. Find all its divisors
over $\Z[t_1^{\pm 1},\ldots,t_n^{\pm 1}]$. As this is a multivariable polynomial, there should not be too many of them. Each divisor $\Delta'$
satisfying the symmetry property $\Delta'(t_1^{-1},\ldots,t_n^{-1})=\pm\Delta'(t_1,\ldots,t_n)$ 
can potentially be the Alexander polynomial of the quotient. For such $\Delta'$ compute the quotient $h_{free}=\Delta(-1,\ldots,-1)/\Delta'(-1,\ldots,-1)$.
This $h_{free}$ is the candidate for the par of $H_1(\Sigma(L);\Z)$ on which the symmetry group acts freely. Then we act as descibed in Section~\ref{sec:double}.

\subsection{Periodicity obstruction for links}\label{sec:period_links}
The proof of Theorem~\ref{thm:periodicity_criterion} uses the fact that $L$ is a knot (and not a link) only in one place. Namely, we
use the computation of $\ELeeP(L;\F(\xi_{p^s}))$ from Corollary~\ref{cor:equiv_lee_homology_knot}, therefore the theorem can be extended to the case of links with trivial action on components without any difficulties.

For a general link $L$, in order to determine individual groups $\ELee^{i}(L, M)$ (the same reasoning applies to $\EBN(L, M)$), for $M$ either $\Q(\xi_{m/d})$ or $V_{\chi}$, for some $\chi \in C(m, r)_{d}$, it is sufficient to look at canonical generators of $\Lee^{i}(L)$. Proposition~\ref{prop:equiv-lee-homology-computation-semisimple-case} states that $\ELee^{i}(L, M)$ is generated over $\F$ by certain linear combinations of generators from orbits with isotropy group $G$ such that $G \subset \Z_{d} \subset \Z_{m}$, see \cite{CurtisReiner}. In particular the equivariant Lee polynomial $\ELeeP(L;M)$ can be determined using the $s$--invariants of $L$,
the linking numbers of components of $L$ and the action of the symmetry group on the components of $L$.

\begin{example}\label{ex:link-periodicity}
  Consider Borromean rings. It is a $3$--periodic link. Choose an orientation preserved by the rotational $\Z_3$ symmetry. The symmetry interchanges components of $L$ and the
linking numbers between components are $0$. There are $8$ canonical generators which correspond to $8$ orientations. The chosen orientation $\mathcal{O}_0$ and its reverse $\overline{\mathcal{O}}_0$ are invariant. The remaining orientations can be divided into two orbits $\mathfrak{O}_1$ and $\mathfrak{O}_2$. Therefore, $\ELee(L, \Q)$ is spanned by $x_{\mathcal{O}_1}$, $x_{\mathcal{O}_2}$, $\frac{1}{3} \sum_{\mathcal{O} \in \mathfrak{O}_1} x_{\mathcal{O}}$ and $\frac{1}{3} \sum_{\mathcal{O} \in \mathfrak{O}_2} x_{\mathcal{O}}$. This implies that
  $$\ELeeP(L, \Q) = \ELeeP(L, \Q(\xi_3)) = 2q + 2q^{-1},$$
  because coefficients of $\ELeeP(L, \Q(\xi_3))$ are always even. 
\end{example}

The proof of Theorem~\ref{thm:periodicity_criterion} given in Section~\ref{sec:bigproof}
can be generalized for links. The following formulation requires the knowledge of the equivariant Lee polynomial $\ELeeP$, but this can be computed
using methods of Section~\ref{sec:lee}.

\begin{theorem}\label{thm:periodic_two_links}
Suppose $L$ is a $p^n$-periodic link with $p$ odd. Assume that $\F$ is as in Theorem~\ref{thm:periodicity_criterion}. The Khovanov polynomial
$\KhP(L;\F)$ decomposes as $\KhP(L;\F)=\mathcal{P}_0+\sum_{j=1}^n (p^j-p^{j-1})\mathcal{P}_j$, where
$$\mathcal{P}_{j} = \frac{1}{p^j-p^{j-1}} \ELeeP(L,\F(\xi_{p^j})) + \sum_{j=1}^{\infty}(1+tq^{2cj})\mathcal{S}_{kj}(t,q),$$
for $0 \leq j \leq n$ and $\mathcal{S}_{k,j}(t,q)$ have nonnegative coefficients. Moreover, polynomials $\mathcal{P}_{j}$ satisfy conditions~\ref{item5:main_thm_1} and~\ref{item6:main_thm_1} from Theorem~\ref{thm:periodicity_criterion}.
\end{theorem}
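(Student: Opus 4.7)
The plan is to run through the argument of Section~\ref{sec:bigproof} essentially verbatim, replacing only the single step where Corollary~\ref{cor:equiv_lee_homology_knot} was invoked to collapse $\ELeeP$ to $q^{s(K;\F)}(q+q^{-1})$. The knot hypothesis was used only to simplify the Lee polynomial on the $E_\infty$ page; every other ingredient (the identification $\KhP(L;\F)=\EKhP(L;\Lambda_{p^n})$, the irreducible decomposition of $\Lambda_{p^n}$, the divisibility of $\EKhP(L;\F(\xi_{p^s}))$ by $\varphi(p^s)$, and the equivariant Lee/Bar-Natan spectral sequence) is completely indifferent to the number of components.

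First I would observe, exactly as in Section~\ref{sec:bigproof}, that Corollary~\ref{cor:trivial_on_scalars} together with the decomposition of $\Lambda_{p^n}$ into irreducible summands $\F(\xi_{p^s})$ (via Proposition~\ref{prop:rational-representations} when $\F=\Q$ or Proposition~\ref{prop:representations-finite-characteristic} and Corollary~\ref{cor:maximal_multiplicative_order} when $\F=\F_r$ with $r$ of maximal order modulo $p^n$) yields the splitting
\[\KhP(L;\F)=\sum_{s=0}^{n}\EKhP(L;\F(\xi_{p^{s}})).\]
Then Proposition~\ref{prop:divisib} shows that each $\EKhP(L;\F(\xi_{p^s}))$ has all coefficients divisible by $p^s-p^{s-1}$ (for $s\ge 1$), so the candidates
\[\mathcal{P}_{s}=\frac{1}{p^s-p^{s-1}}\EKhP(L;\F(\xi_{p^{s}}))\qquad (s\ge 1),\qquad \mathcal{P}_0=\EKhP(L;\F(\xi_1)),\]
lie in $\Z[q^{\pm 1},t^{\pm 1}]$ and sum to the right expression.

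Next I would apply the equivariant Lee (resp.\ Bar-Natan, when $\chr\F=2$) spectral sequence of Proposition~\ref{prop:equivariantspectral} to each module $M=\F(\xi_{p^s})$ and invoke Proposition~\ref{prop:ekhpandeleep} to write
\[\EKhP(L;\F(\xi_{p^s}))=\ELeeP(L;\F(\xi_{p^s}))+\sum_{j\ge 1}(1+tq^{2cj})\widetilde{\mathcal{S}}_{sj}\]
with $\widetilde{\mathcal{S}}_{sj}$ of non-negative integer coefficients, and with the width bound $\widetilde{\mathcal{S}}_{sj}=0$ for $j>\tfrac{c}{2}w$ furnished by the tail of Proposition~\ref{prop:ekhpandeleep}. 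The same divisibility reasoning as in Proposition~\ref{prop:divisib} applies to the graded pieces on the $E_\infty$ page (each is an $\F(\xi_{p^s})$-vector space), hence to $\widetilde{\mathcal{S}}_{sj}$; dividing through by $p^s-p^{s-1}$ produces the $\mathcal{S}_{sj}$ of the statement and, after dividing the Lee term as well, the desired shape of $\mathcal{P}_s$. Note that in the link case we do not simplify $\ELeeP(L;\F(\xi_{p^s}))$ further; its precise form is provided by Proposition~\ref{prop:equiv-lee-homology-computation-semisimple-case}, which expresses it as a sum over orbits of orientations of $L$ (with the data being $s$-invariants per orbit together with linking-number shifts), and this is exactly the quantity left in the statement.

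Finally, condition~\ref{item5:main_thm_1} follows exactly as in the knot case: the identity $\mathcal{P}_{s}|_{t=-1}-\mathcal{P}_{s+1}|_{t=-1}=\DJones_{s}(L)$ (or its finite-characteristic analogue with $\chi\in C(p^n,r)_{p^{n-s}}$) holds by construction of the difference Jones polynomials in Section~\ref{sec:diffjones}, and the congruence modulo $q^{p^{n-s}}-q^{-p^{n-s}}$ is precisely the content of Theorem~\ref{thm:skein relation} or Theorem~\ref{thm:skein-relation-finite-char}, neither of which requires $L$ to be a knot. The main ``obstacle'', which is really a bookkeeping rather than a conceptual one, is ensuring that the divisibility by $\varphi(p^s)$ actually passes to the individual $\widetilde{\mathcal{S}}_{sj}$ and to $\ELeeP(L;\F(\xi_{p^s}))$ separately; this follows because both the successive pages of the spectral sequence and the $E_\infty$ page inherit the structure of $\F(\xi_{p^s})$-vector spaces, so the argument of Proposition~\ref{prop:divisib} applies term by term.
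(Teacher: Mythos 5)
Your proposal is correct and follows the paper's own approach: the authors explicitly remark that the knot hypothesis in Section~\ref{sec:bigproof} is used only in invoking Corollary~\ref{cor:equiv_lee_homology_knot} to simplify $\ELeeP$, and that the argument otherwise carries over verbatim, which is exactly what you have spelled out (including the correct observation that divisibility by $\varphi(p^s)$ passes to each $\widetilde{\mathcal{S}}_{sj}$ and to $\ELeeP(L;\F(\xi_{p^s}))$ because every page of the equivariant spectral sequence is an $\F(\xi_{p^s})$-vector space).
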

If the action of the symmetry group fixes the components, then the equivariant Lee polynomial is equal to the standard Lee polynomial for the trivial representation,
and it is equal to zero for all others. Therefore we obtain the following corollary.
\begin{corollary}\label{cor:fixed}
  If $L$ is as in the above theorem and the action of $\Z_{p^j}$ is trivial on components, then we have a decomposition of the Khovanov polynomial of $L$
  $$\KhP(L;\F)=\mathcal{P}_0+\sum_{j=1}^n (p^j-p^{j-1})\mathcal{P}_j$$
  where
  \begin{align*}
    \mathcal{P}_0 &= \LeeP(L,\F) + \sum_{j=1}^\infty (1+tq^{2cj})\mathcal{S}_{0j}(t,q), \\
    \mathcal{P}_{j} &= \sum_{k=1}^\infty (1+tq^{2ck})\mathcal{S}_{k,j}(t,q),
  \end{align*}
  satisfy conditions~\ref{item5:main_thm_1} and~\ref{item6:main_thm_1} from Theorem~\ref{thm:periodicity_criterion}.
\end{corollary}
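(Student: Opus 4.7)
The plan is to deduce the statement directly from Theorem~\ref{thm:periodic_two_links} by evaluating the equivariant Lee (respectively Bar--Natan) polynomial $\ELeeP(L;\F(\xi_{p^j}))$ in the special case when $\Z_{p^n}$ acts trivially on the components of $L$. The only thing to do is to show that under this hypothesis
\begin{equation*}
\ELeeP(L;\F) = \LeeP(L;\F), \qquad \ELeeP(L;\F(\xi_{p^j})) = 0 \text{ for } 1\le j\le n,
\end{equation*}
with the analogous equalities for $\EBNP$ when $\chr(\F)=2$. Substituting these values into the formula for $\mathcal{P}_j$ in Theorem~\ref{thm:periodic_two_links} yields the desired decomposition.

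First I would recall the description of the canonical generators of Lee/Bar--Natan homology from Theorem~\ref{thm:lee-bar-natan-homology}: there is a basis $\{x_{\mathcal{O}}\colon \mathcal{O}\in\Or(L)\}$ indexed by orientations of $L$, and the action of $\Z_{p^n}$ on this basis comes from its action on $\Or(L)$. Since the symmetry permutes components, each orientation $\mathcal{O}\in\Or(L)$ is obtained from a fixed chosen invariant orientation $\mathcal{O}_{0}$ by reversing orientation on a subset $S\subset\pi_0(L)$ of components. The induced action of $g\in\Z_{p^n}$ on orientations sends $S$ to $g\cdot S$; under the assumption that the action on $\pi_0(L)$ is trivial, every $g$ fixes every $S$, hence every orientation is fixed. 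In the notation of Section~\ref{sec:lee}, $\Or^{d'}(L)=\emptyset$ for $d'<p^n$ and $\Or^{p^n}(L)=\Or(L)$.

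Next I would apply Proposition~\ref{prop:equiv-lee-homology-computation-semisimple-case}. For $M=\F(\xi_{p^n/d})$, only orbits with isotropy $\Z_{d'}$ satisfying $d'\mid d$ contribute. Under our assumption this forces $p^{n}\mid d$, hence $d=p^n$ and $M=\F$. Summing over orbits (each of length $1$) we obtain $\ELee^{i}(L;\F)\cong \bigoplus_{\mathcal{O}\in\Or_i(L)}\F = \Lee^{i}(L;\F)$, and $\ELee^{i}(L;\F(\xi_{p^j}))=0$ for $j\ge 1$; the case $\chr(\F)=2$ is identical using $V_\chi$ in place of $\F(\xi_{p^j})$ and invoking Proposition~\ref{prop:representations-finite-characteristic}. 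Taking graded dimensions gives the two displayed identities above.

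Plugging these identities into
\[
\mathcal{P}_j = \tfrac{1}{p^j-p^{j-1}}\ELeeP(L;\F(\xi_{p^j})) + \sum_{k=1}^{\infty}(1+tq^{2ck})\mathcal{S}_{kj}(t,q)
\]
from Theorem~\ref{thm:periodic_two_links} gives the stated expressions for $\mathcal{P}_0$ and $\mathcal{P}_j$, while conditions \eqref{item5:main_thm_1} and \eqref{item6:main_thm_1} are inherited directly from Theorem~\ref{thm:periodic_two_links}. I do not expect any serious obstacle here; the only subtle point is the bookkeeping of isotropy subgroups in Proposition~\ref{prop:equiv-lee-homology-computation-semisimple-case}, which is completely trivialized by the hypothesis that the action on components is trivial.
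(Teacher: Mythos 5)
Your proof is correct and follows essentially the same route as the paper: the paper derives the corollary from the one-line observation preceding it (equivalently, from Corollary~\ref{cor:equiv_lee_homology_knot}), namely that when the action on components is trivial one has $\ELeeP(L;\F)=\LeeP(L;\F)$ and $\ELeeP(L;\F(\xi_{p^j}))=0$ for $j\ge 1$, and then substitutes into Theorem~\ref{thm:periodic_two_links}. You unwind that vanishing directly from Proposition~\ref{prop:equiv-lee-homology-computation-semisimple-case} via the isotropy-group bookkeeping (every orientation has full stabilizer $\Z_{p^n}$), which is exactly the content of Corollary~\ref{cor:equiv_lee_homology_knot}, so the two arguments coincide.
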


\subsection{Examples for links}\label{sec:exlinks}
We have applied the following periodicity criteria for periods $5$ and $7$ looking for links admitting a group action fixing the link components.
\begin{itemize}
\item The HOMFLYPT critierion;
\item The Naik's criterion as described above;
\item The Snappy criterion.
\end{itemize}
Out of 120572 links with 12 to 15 crossings, 39 pass all the three above criteria for period $5$. 
Further 13 links fail to the linking number criteria.
Among the remaining links, the following 11 links
L11n436, 
L12n1868,  
L12n1933, 
L14n47791, 
L14n48169,
L14n48174,
L14n48450, 
L14n51728, 
L14n52589, 
L14n52622 and 
L14n54427 are obstructed by Corollary~\ref{cor:fixed}. All these links have determinant $0$, therefore the Naik's criterion for the double branched cover
does not give any obstruction, moreover the Jabuka--Naik criterion does not apply either.

As for period 7, there are 10 knots among those that 120572 knots that pass the three above criteria. Exactly one link, L14n64054, is obstructed by the Khovanov
polynomial criterion. This link also has determinant $0$.

\smallskip
We remark that 
it is possible to push the Sakuma' criterion for the Alexander polynomials (Theorem~\ref{thm:ALEcritLinks}) 
criterion to obtain a stronger obstruction from the Alexander polynomials than just divisibility in \eqref{eq:ALEcritLinks}, more in spirit of 
Theorem~\ref{thm:ALEcrit}. The criterion might be more complicated, because it might involve different linking numbers between different components and
the rotation axis. We did not check this enhanced criterion.


\appendix

\section{Review of representation theory}\label{appendix}

The purpose of this section is to gather results from representation theory needed in the remainder part of the paper. We will always denote by $\F$ either the field of rationals $\Q$ or the finite field $\F_{r}$, for a prime $r$.

Let $\Z_{m}$ be a finite cyclic group of order $m$. We will be interested in representations of this group over $\Q$ and over $\F_{r}$, for some prime $r$ such that $\gcd(r,m)=1$. Notice that $\gcd(r,m)=1$ 
if and only if, $m$ is invertible in $\F_{r}$.

We start with the following fundamental results.

\begin{proposition}[Maschke's Theorem]\label{prop:maschke-thm}
  The group algebra $\F[\Z_{m}]$ is semisimple if and only if, $m$ is invertible in $\F$.
\end{proposition}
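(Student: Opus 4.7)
The plan is to reduce the question to a concrete polynomial statement by using the isomorphism
\[\F[\Z_m] \cong \F[x]/(x^m-1),\]
obtained by sending a generator of $\Z_m$ to $x$. Under this identification, semisimplicity of $\F[\Z_m]$ becomes the statement that the quotient algebra $\F[x]/(x^m-1)$ is semisimple, which in turn is equivalent to the polynomial $x^m-1$ being squarefree in $\F[x]$.

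The latter equivalence will be handled by the Chinese Remainder Theorem: if $x^m-1 = \prod_i f_i$ is a product of pairwise coprime irreducible factors, then $\F[x]/(x^m-1) \cong \prod_i \F[x]/(f_i)$ is a finite product of fields, hence semisimple. Conversely, if some irreducible $f$ satisfies $f^2 \mid x^m-1$, then the image of $f$ generates a nonzero nilpotent ideal in $\F[x]/(x^m-1)$, so the Jacobson radical is nontrivial and the algebra fails to be semisimple.

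It then remains to translate ``$x^m-1$ is squarefree in $\F[x]$'' into ``$m$ is invertible in $\F$''. This is the standard separability criterion via the derivative: a polynomial $f \in \F[x]$ is squarefree if and only if $\gcd(f, f') = 1$. Here $f' = m x^{m-1}$. If $m$ is invertible in $\F$, then $\gcd(x^m-1, m x^{m-1}) = \gcd(x^m-1, x^{m-1})$, and since $x \cdot x^{m-1} - (x^m-1) = 1$ this gcd is $1$, so $x^m-1$ is squarefree. Conversely, if $m = 0$ in $\F$, then $f' = 0$, so every irreducible factor of $x^m-1$ divides both $f$ and $f'$, which forces $x^m-1$ to have a repeated factor.

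I do not expect a real obstacle here; the only subtle point is the ``algebra is semisimple iff the defining polynomial is squarefree'' step, which some readers may prefer to see justified via the explicit nilpotent element $N = 1 + g + g^2 + \dots + g^{m-1} \in \F[\Z_m]$ (where $g$ generates $\Z_m$), satisfying $N^2 = mN = 0$ when $\chr(\F) \mid m$, giving directly a nonzero element of the Jacobson radical. Either formulation completes the proof in a few lines.
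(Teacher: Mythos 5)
The paper states Maschke's theorem as a known result without giving a proof, so there is nothing to compare against directly; I will instead assess your argument on its own terms. Your strategy — identifying $\F[\Z_m]\cong\F[x]/(x^m-1)$, reducing semisimplicity to squarefreeness of $x^m-1$, and using the derivative criterion — is sound and complete for this special case (cyclic groups), and is a genuinely different route from the classical averaging proof of Maschke's theorem (which applies to arbitrary finite groups and which is what the paper's attribution presumably points to). It has the advantage of being elementary and explicit about the irreducible summands, which is in fact exactly the decomposition the paper goes on to use in Propositions~\ref{prop:rational-representations} and~\ref{prop:representations-finite-characteristic}.

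There is, however, one technically incorrect step you should repair. In the direction where $f^2\mid x^m-1$ for some irreducible $f$, you assert that ``the image of $f$ generates a nonzero nilpotent ideal in $\F[x]/(x^m-1)$.'' That is false in general: under the CRT decomposition $\F[x]/(x^m-1)\cong\prod_i\F[x]/(f_i^{e_i})$, the image of $f=f_{i_0}$ is nilpotent only in the $i_0$-th factor and is a \emph{unit} in every other factor, so neither $f$ nor the ideal it generates is nilpotent. The fix is small: take instead the image of $g:=(x^m-1)/f$, which is nonzero (degree $<m$) and satisfies $g^2=(x^m-1)\cdot\bigl((x^m-1)/f^2\bigr)\equiv0$, hence is a nonzero nilpotent; equivalently, observe that $\F[x]/(x^m-1)$ surjects onto $\F[x]/(f^2)$, which is visibly not semisimple, and a quotient of a semisimple ring is semisimple. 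Your alternative formulation via the element $N=1+g+\dots+g^{m-1}$ with $N^2=mN$ is correct as written and avoids this issue entirely. Finally, your use of the criterion ``$f$ squarefree iff $\gcd(f,f')=1$'' is safe here because the paper only ever takes $\F=\Q$ or $\F=\F_r$, both perfect; over a general field this criterion can fail and your argument would need adjustment.
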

As a corollary we obtain a result which is extensively used throughout the paper.
\begin{corollary}[see \expandafter{\cite[Theorem 4.2.2]{Weibel}}]\label{cor:extvanish}
If $m$ is invertible in $\F$ and $\Lambda_m=\F[\Z_m]$, then $\Ext^i_{\Lambda_m}(M;N)=0$ for any $i>0$ and any $\Lambda_m$-modules $M$ and $N$.
\end{corollary}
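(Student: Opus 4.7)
The plan is to derive this as an immediate consequence of Maschke's Theorem (Proposition~\ref{prop:maschke-thm}) via the standard homological algebra dictionary: semisimple rings have global dimension zero.

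First I would invoke Proposition~\ref{prop:maschke-thm} to conclude that, under the hypothesis that $m$ is invertible in $\F$, the group algebra $\Lambda_m = \F[\Z_m]$ is semisimple. The key consequence I need is the classical fact that over a semisimple ring every module is projective (equivalently, every short exact sequence of modules splits). This is Wedderburn's theorem together with the observation that modules over a product of matrix rings over division rings are direct sums of simple modules, each of which is a direct summand of the regular representation.

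Next, given any $\Lambda_m$-module $M$, projectivity of $M$ means that a projective resolution of $M$ is given by
\[
0 \longrightarrow M \xrightarrow{\ \mathrm{id}\ } M \longrightarrow 0,
\]
concentrated in degree zero. Applying $\Hom_{\Lambda_m}(-,N)$ and taking cohomology therefore yields $\Ext^i_{\Lambda_m}(M,N) = 0$ for all $i > 0$ and any $\Lambda_m$-module $N$.

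There is no real obstacle here; the only subtlety is that one must cite the implication ``semisimple $\Rightarrow$ all modules projective.'' For readers who prefer a direct reference, this is exactly \cite[Theorem 4.2.2]{Weibel}, which is already the reference supplied in the statement. Thus the proof reduces to a one-line appeal to Maschke's theorem followed by invoking this standard fact.
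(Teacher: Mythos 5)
Your proposal is correct and follows exactly the route the paper intends: apply Maschke's Theorem (Proposition~\ref{prop:maschke-thm}) to get semisimplicity of $\Lambda_m$, then use the standard fact that over a semisimple ring every module is projective, so the length-zero projective resolution of $M$ gives $\Ext^i_{\Lambda_m}(M,N)=0$ for $i>0$. The paper states this as an immediate corollary and simply cites \cite[Theorem 4.2.2]{Weibel} for the same implication, so there is no substantive difference.
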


\begin{lemma}[Schur's Lemma]\label{prop:schur-lemma}
  Let $\F$ be as above. Suppose that $M$ and $N$ are irreducible $\Z_{m}$-modules. If $M \not\cong N$, then
  $$\Hom_{\F[\Z_{m}]}(M,N) = 0.$$
  On the other hand if $M \cong N$, then every non-zero element of $\Hom_{\F[\Z_{m}]}(M,N)$ is an isomorphism.
\end{lemma}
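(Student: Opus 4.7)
The plan is to prove Schur's Lemma by the standard submodule argument, which goes through unchanged for the group algebra $\F[\Z_m]$ because it uses only the module structure and irreducibility hypotheses, not anything specific to $\Z_m$ or to the ground field.

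First I would fix an arbitrary $\F[\Z_m]$-module homomorphism $\varphi \in \Hom_{\F[\Z_m]}(M,N)$ and observe that $\ker \varphi \subseteq M$ and $\im \varphi \subseteq N$ are $\F[\Z_m]$-submodules. Since $M$ is irreducible, $\ker \varphi \in \{0, M\}$, and since $N$ is irreducible, $\im \varphi \in \{0, N\}$. Thus $\varphi$ is either zero or both injective and surjective, i.e.\ an isomorphism.

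From this dichotomy the two assertions are immediate. If $M \not\cong N$, then no non-zero $\varphi$ can exist, so $\Hom_{\F[\Z_m]}(M,N) = 0$. If $M \cong N$, then by the above every non-zero element of $\Hom_{\F[\Z_m]}(M,N)$ is an isomorphism.

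The statement contains no real obstacle: the argument is formal and makes no use of the cyclic structure of $\Z_m$, of the characteristic of $\F$, or of the semisimplicity hypothesis $\gcd(r,m)=1$ from Proposition~\ref{prop:maschke-thm}. The only subtle point, which merits a single clarifying sentence, is to justify that the preimage and image of an $\F[\Z_m]$-module map are again $\F[\Z_m]$-submodules, which follows at once from $\F$-linearity together with $\varphi(g \cdot x) = g \cdot \varphi(x)$ for every generator $g$ of $\Z_m$.
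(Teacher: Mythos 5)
Your proof is correct and is the standard, textbook argument for Schur's Lemma: the kernel and image of a module homomorphism are submodules, irreducibility forces each to be trivial or full, and hence any nonzero map is an isomorphism. The paper itself states this lemma without proof, treating it as a standard fact from representation theory, so there is no alternative approach in the paper to compare against; your argument fills in exactly the expected reasoning.
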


From now on we will assume that $m$ is invertible in $\F$. Since $\F[\Z_{m}] = \F[X]/(X^m-1)$ is semisimple, the decomposition of the group algebra into irreducible representations is governed by the decomposition of the polynomial $X^{m}-1$ into irreducible factors over $\F$. This polynomial decomposes over $\Z$ into product of \textit{cyclotomic polynomials}

\[\Phi_{d}(X) = \prod_{\stackrel{1 \leq k < d}{\gcd(k,d) = 1}}(X - \xi_{d}) \in \Z[X], \quad d \mid m, \]
where $\xi_{d}$ denotes a primitive root of unity of order $d$. In particular, we have a decomposition
\[\F[\Z_{m}] = \bigoplus_{d \mid m} \F[X]/(\Phi_{d}(X)),\]
the above summands need not be irreducible though. 

\begin{proposition}\label{prop:rational-representations}
  The group algebra $\Q[\Z_{m}]$ admits the following decomposition into a direct sum of irreducible summands
  \[\Q[\Z_{m}] = \bigoplus_{d \mid m} \Q(\xi_{d}),\]
  where
  \[\Q(\xi_{d}) = \Q[X]/(\Phi_{d}(X)), \quad \xi_{d} = \exp\left(\frac{2 \pi i}{d}\right).\]
\end{proposition}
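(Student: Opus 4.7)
The plan is to identify $\Q[\Z_m]$ with $\Q[X]/(X^m-1)$, then to factor $X^m-1$ into its irreducible pieces over $\Q$, and finally to apply the Chinese Remainder Theorem.

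First I would use the standard isomorphism $\Q[\Z_m] \cong \Q[X]/(X^m-1)$ obtained by sending a generator of $\Z_m$ to $X$. Next, I would invoke the classical factorization
\[
X^m - 1 = \prod_{d \mid m} \Phi_d(X)
\]
in $\Z[X]$, which follows from partitioning the $m$-th roots of unity according to their order. Since the polynomials $\Phi_d(X)$ for distinct $d \mid m$ are pairwise coprime in $\Q[X]$ (they have no common complex roots), the Chinese Remainder Theorem yields
\[
\Q[X]/(X^m-1) \;\cong\; \bigoplus_{d \mid m} \Q[X]/(\Phi_d(X)).
\]

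The key step, and the one I view as the main substantive ingredient, is the irreducibility of each $\Phi_d(X)$ over $\Q$. This is the classical theorem of Gauss, proved by first showing $\Phi_d \in \Z[X]$ (induction on $d$ using the recursive formula $\Phi_d(X) = (X^d-1)/\prod_{e \mid d,\, e < d} \Phi_e(X)$ together with Gauss's lemma), and then showing that any primitive $d$-th root of unity $\xi_d$ has all its Galois conjugates among the primitive $d$-th roots of unity. The argument for the latter proceeds by taking a monic irreducible factor $f \in \Z[X]$ of $\Phi_d$ with $f(\xi_d) = 0$, and showing that for every prime $p$ coprime to $d$ one has $f(\xi_d^p) = 0$; this is done by assuming otherwise, using $\Phi_d = f \cdot g$, reducing mod $p$, and deriving a contradiction from the fact that $X^d-1$ has no repeated roots in $\F_p[X]$. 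Since every primitive $d$-th root of unity is obtained from $\xi_d$ by iterating such prime-power maps, $f = \Phi_d$.

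Granted the irreducibility of $\Phi_d$, the quotient $\Q[X]/(\Phi_d(X))$ is a field, canonically isomorphic to $\Q(\xi_d)$ via $X \mapsto \xi_d$, and the desired decomposition into irreducible summands follows directly. I would present the irreducibility of $\Phi_d$ as a citation to a standard algebra reference rather than reproducing its proof, since it is not specific to the paper; the CRT step and the identification of cyclic group algebras with truncated polynomial algebras are then immediate.
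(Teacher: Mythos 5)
Your proof is correct and follows essentially the same route the paper takes (the paper states this as a standard fact without a formal proof, but the surrounding discussion identifies $\F[\Z_m]$ with $\F[X]/(X^m-1)$, factors $X^m-1$ into cyclotomic polynomials, and notes that over $\Q$ these factors are irreducible). Your added detail on CRT and the irreducibility of $\Phi_d$ over $\Q$ is accurate and appropriately cited rather than reproved.
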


Define the \emph{Euler's totient function} to be
\begin{equation}
  \label{eq:Eulers-totient-function}
  \varphi(d) = \#\{k \in \Z \mid 1 \leq k \leq d-1, \quad \gcd(d,k) = 1\}.
\end{equation}
It is not hard to check that $\dim_{\Q} \Q(\xi_d) = \varphi(d).$ Moreover, if $p$ is a prime and $k \geq 1$, then $\varphi(p^k) = p^k - p^{k-1}$.

In finite characteristic the situation is a bit different. Irreducible representations are parametrized by \textit{cyclotomic cosets} mod $r$ in $\Z_{m}$.  They are defined as equivalence classes of the following relation
  \[a \sim_{r} b \iff \exists_{k} \quad a\cdot r^{k} \equiv b \pmod{m}.\]
Denote by $C(m,r)$ the set of cyclotomic cosets mod $r$ in $\Z_{m}$. If $d \mid m$ we can distinguish a subset $C(m,r)_{d} \subset C(m,r)$ consisting of cyclotomic cosets $\chi$ such that any $a \in \chi$ satisfies $\gcd(a,m) = d$.

\begin{definition}
  To every cyclotomic coset $\chi \in C(m,r)$ we can assign a polynomial in $\F_{r}[X]$
  \[f_{\chi}(X) = \prod_{u \in \chi} (X - \xi_{m}^{u}),\]
  where $\xi_{m}$ denotes a primitive root of unity in some finite extension $\K$ of $\F_{r}$.
\end{definition}

\begin{proposition}\label{prop:cyclotomic-pol-in-finite-fields}
  If $d \mid m$ then the cyclotomic polynomial $\Phi_{\frac{m}{d}}(X)$ decomposes in $\F_{r}[X]$ in the following way
  \[\Phi_{\frac{m}{d}}(X) = \prod_{\chi \in C(m,r)_{d}} f_{\chi}(X).\]
\end{proposition}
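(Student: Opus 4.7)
The plan is to reduce the statement to three independent facts: (i) the roots of $\Phi_{m/d}$ in a suitable algebraic closure of $\F_r$ are exactly the elements $\xi_m^a$ with $\gcd(a,m)=d$; (ii) the set $S_d := \{a \in \Z/m\Z : \gcd(a,m)=d\}$ is stable under multiplication by $r$, and hence is a union of cyclotomic cosets of $C(m,r)_d$; (iii) each polynomial $f_\chi$ actually has coefficients in $\F_r$, so the factorization is really over $\F_r[X]$ and not merely over the splitting field.

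First I would fix an extension $\K / \F_r$ containing a primitive $m$-th root of unity $\xi_m$ (this exists because $\gcd(r,m)=1$). Then $\xi_m^d$ is a primitive $(m/d)$-th root of unity, and the primitive $(m/d)$-th roots are exactly the elements $\xi_m^{kd}$ with $\gcd(k, m/d) = 1$, i.e.\ the elements $\xi_m^a$ with $\gcd(a, m) = d$. This gives the factorization
\[\Phi_{m/d}(X) = \prod_{a \in S_d} (X - \xi_m^a)\]
in $\K[X]$.

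Next I would check step (ii). Since $\gcd(r,m)=1$, multiplication by $r$ is a bijection of $\Z/m\Z$, and clearly $\gcd(ra, m) = \gcd(a,m)$. Hence the relation $\sim_r$ preserves the value of $\gcd(\cdot, m)$, so every coset $\chi \in C(m,r)$ that meets $S_d$ is contained in $S_d$, and $S_d$ is the disjoint union of the cosets in $C(m,r)_d$. Rearranging the product above along this partition yields
\[\Phi_{m/d}(X) = \prod_{\chi \in C(m,r)_d} \prod_{u \in \chi}(X - \xi_m^u) = \prod_{\chi \in C(m,r)_d} f_\chi(X).\]

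Finally, for step (iii), I would invoke the Frobenius $\varphi \colon y \mapsto y^r$ on $\K$. By the very definition of $\chi$, the set $\{\xi_m^u : u \in \chi\}$ is stable under $\varphi$, so the coefficients of $f_\chi$, being (signed) elementary symmetric functions in these roots, are fixed by $\varphi$ and therefore lie in $\F_r$. Thus each $f_\chi$ is an element of $\F_r[X]$, and the displayed identity is the sought decomposition over $\F_r$. I do not anticipate a genuine obstacle here: the argument is a direct combinatorial/Galois-theoretic manipulation, and the only subtlety is verifying the bijection between $S_d$ and the primitive $(m/d)$-th roots in step (i), which amounts to the elementary observation that $\gcd(kd, m) = d$ iff $\gcd(k, m/d) = 1$.
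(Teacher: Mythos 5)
Your proof is correct and follows essentially the same route as the paper: both arguments pass to a splitting field $\K$ containing $\xi_m$, use that $\operatorname{Gal}(\K/\F_r)$ is generated by the Frobenius $y \mapsto y^r$, and match cyclotomic cosets with Frobenius orbits on roots of unity. The only cosmetic difference is that you factor $\Phi_{m/d}$ directly and spell out that each $f_\chi$ is $\F_r$-rational, whereas the paper starts from $X^m - 1$ and appeals in one line to the correspondence between Galois orbits and irreducible factors (which also gives irreducibility of the $f_\chi$, a fact the stated proposition does not actually require).
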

\begin{proof}
  Let $\F_{r} \subset \K$ be an extension that contains all roots of unity of order $m$. The Galois group $Gal(\K/\F_{r})$ is generated by the Frobenius map
  \[F \colon \K \to \K, \quad F(a) = a^{r}.\]
  The cyclotomic polynomial $\Phi_{m}(X)$ splits into linear factors over $\K$
  \[X^{m} - 1 = \prod_{0 \leq k \leq m-1} (X - \xi_{m}^{k}),\]
  therefore irreducible factors of $(X^{m}-1)$ over $\F_{r}$ correspond to the orbits of the action of $Gal(\K/\F_{r})$ on the set of roots of unity of order $m$. It is easy to verify that every such orbit is of the form
  \[\{\xi_{m}^{k} \colon k \in \chi\},\]
  for some fixed $\chi \in C(m,r)$.
\end{proof}

\begin{definition}\label{def:cyclotomic-cosets-modules}
  Let $\chi \in C(m,r)$, then define the $\F_{r}[\Z_{m}]$-module
  \[V_{\chi} = \F_{r}[X]/(f_{\chi}(X)).\]
  If $t \in \Z_{m}$ is a fixed generator, then $t$ acts on $V_{\chi}$ by multiplication by $X$.
\end{definition}
Let, $f_{d,r}$ denote the multiplicative order of $r$ mod $d$, i.e. $f_{d,r}$ is the smallest positive integer such that
\begin{equation}
  \label{eq:multiplicative_order}
  r^{f_{d,r}} \equiv 1 \pmod{d}.
\end{equation}
\begin{lemma}\label{obs:dim_irreducible_fin_char}
  We have $\dim_{\F_{r}} V_{\chi} = f_{\frac{m}{d},r}$, provided that $\chi \in C(m,r)_{d}$.
\end{lemma}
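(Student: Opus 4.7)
The plan is to unpack the two definitions and show that the degree of $f_\chi(X)$ equals the multiplicative order of $r$ modulo $m/d$.

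First I would note the easy observation that, by the definition of $V_\chi = \F_r[X]/(f_\chi(X))$, we have
\[\dim_{\F_r} V_\chi = \deg f_\chi(X) = |\chi|,\]
since $f_\chi(X) = \prod_{u \in \chi}(X - \xi_m^u)$ is a product of $|\chi|$ distinct linear factors over the splitting field. So the whole lemma reduces to the purely number-theoretic claim that $|\chi| = f_{m/d,r}$ whenever $\chi \in C(m,r)_d$.

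Next I would pick any representative $a \in \chi$. By hypothesis $\gcd(a,m) = d$, so I can write $a = d a'$ with $\gcd(a', m/d) = 1$. The cardinality of the orbit of $a$ under multiplication by $r$ modulo $m$ is the least positive integer $k$ with $a r^k \equiv a \pmod m$, equivalently $a(r^k - 1) \equiv 0 \pmod m$. Substituting $a = d a'$ and dividing by $d$ this becomes $a'(r^k - 1) \equiv 0 \pmod{m/d}$, and since $\gcd(a',m/d)=1$ this in turn is equivalent to
\[r^k \equiv 1 \pmod{m/d}.\]
The minimal such $k$ is by definition $f_{m/d,r}$; see \eqref{eq:multiplicative_order}. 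Hence $|\chi| = f_{m/d,r}$, which combined with the opening observation proves the lemma.

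There is essentially no obstacle here; the only thing to keep straight is the interplay between the index $d$ (a divisor of $m$ measuring the gcd with $m$) and the complementary divisor $m/d$ that actually appears in the definition of the multiplicative order. A sanity check: the total dimension $\sum_{\chi \in C(m,r)_d} |\chi|$ should equal $\deg \Phi_{m/d}(X) = \varphi(m/d)$, which is consistent with Proposition~\ref{prop:cyclotomic-pol-in-finite-fields} and with the well-known fact that $f_{m/d,r}$ divides $\varphi(m/d)$.
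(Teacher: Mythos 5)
Your argument is correct and is exactly the unpacking that the paper's terse ``follows directly from the definition'' leaves implicit: $\dim_{\F_r} V_\chi = |\chi|$, and the orbit length under multiplication by $r$ reduces, after writing $a = da'$ with $\gcd(a',m/d)=1$, to the order of $r$ mod $m/d$. Nothing further is needed.
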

\begin{proof}
  Follows directly from the definition.
\end{proof}

\begin{proposition}\label{prop:representations-finite-characteristic}
  The group algebra $\F_{r}[\Z_{m}]$ admits the following decomposition into irreducible modules
  \[\F_{r}[\Z_{m}] = \bigoplus_{\chi \in C(m,r)} V_{\chi}.\]
  Moreover, if we define a $\Z_{m}$-module $\F_r[\xi_{d}] = \F_r[X]/(\Phi_{d}(X))$, for $d \mid m$, then
  \[\F_r[\xi_{\frac{m}{d}}] = \bigoplus_{\chi \in C(m,r)_{d}} V_{\chi}.\]
\end{proposition}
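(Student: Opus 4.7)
The plan is to reduce everything to the isomorphism $\F_r[\Z_m]\cong\F_r[X]/(X^m-1)$ arising from the choice of a generator of $\Z_m$, and then to repeatedly apply the Chinese Remainder Theorem. Since $m$ is invertible in $\F_r$ (we are assuming $\gcd(m,r)=1$), the polynomial $X^m-1\in\F_r[X]$ is separable; in particular its reduction $\bmod\,r$ still factors as a product of pairwise coprime cyclotomic polynomials
\[
X^m-1=\prod_{d\mid m}\Phi_d(X),
\]
because the reductions $\Phi_d(X)\bmod r$ cannot share roots in the algebraic closure (distinct roots of unity of orders dividing $m$ remain distinct in characteristic $r$ whenever $\gcd(m,r)=1$).

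Next I would invoke Proposition~\ref{prop:cyclotomic-pol-in-finite-fields}, which already gives the factorization
\[
\Phi_{m/d}(X)=\prod_{\chi\in C(m,r)_d}f_\chi(X)\qquad\text{in }\F_r[X],
\]
with the $f_\chi$ pairwise coprime and irreducible (their roots $\xi_m^u$, $u\in\chi$, form distinct Galois orbits under $\mathrm{Frob}_r$). Combining these two factorizations produces a coprime factorization of $X^m-1$ in $\F_r[X]$ indexed by $C(m,r)=\bigsqcup_{d\mid m}C(m,r)_d$, so the Chinese Remainder Theorem yields
\[
\F_r[\Z_m]\cong\F_r[X]/(X^m-1)\cong\bigoplus_{\chi\in C(m,r)}\F_r[X]/(f_\chi(X))=\bigoplus_{\chi\in C(m,r)}V_\chi,
\]
exactly as in Definition~\ref{def:cyclotomic-cosets-modules}. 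Irreducibility of each $V_\chi$ as a $\Z_m$-module follows from irreducibility of $f_\chi$: any non-trivial $\F_r[\Z_m]$-submodule would correspond to a proper ideal of the field $\F_r[X]/(f_\chi(X))$, which has none.

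For the second assertion I would apply exactly the same CRT step to the ring $\F_r[\xi_{m/d}]=\F_r[X]/(\Phi_{m/d}(X))$ using only the $C(m,r)_d$-indexed factorization of $\Phi_{m/d}$ supplied by Proposition~\ref{prop:cyclotomic-pol-in-finite-fields}, giving
\[
\F_r[\xi_{m/d}]\cong\bigoplus_{\chi\in C(m,r)_d}V_\chi.
\]

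There is no real obstacle here: the one point that requires mild care is the justification that the cyclotomic polynomials $\Phi_d(X)$ remain pairwise coprime after reduction modulo $r$, which is exactly where the hypothesis $\gcd(m,r)=1$ (equivalently, separability of $X^m-1$ over $\F_r$) enters. Once this is noted, the entire statement is a bookkeeping exercise combining CRT with Proposition~\ref{prop:cyclotomic-pol-in-finite-fields}.
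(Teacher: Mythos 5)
Your argument is correct, and it follows exactly the route that the paper's surrounding exposition sets up (the paper itself states this proposition without an explicit proof): identify $\F_r[\Z_m]$ with $\F_r[X]/(X^m-1)$, use separability to factor $X^m-1$ into pairwise coprime cyclotomic polynomials, refine each $\Phi_{m/d}$ via Proposition~\ref{prop:cyclotomic-pol-in-finite-fields}, and apply the Chinese Remainder Theorem, with irreducibility of each $V_\chi$ following since $\F_r[X]/(f_\chi)$ is a field. Nothing to add.
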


Let us now restrict our attention to the case when $m = p^{n}$ is a power of a prime such that $r \neq p$. For any $\chi \in C(p^{n},r)$ define
\[p^{s} \cdot \chi = \{p^{s} \cdot a \colon a \in \chi\}.\]
It is easy to verify that if $\chi \in C(p^{n},r)_{p^{t}}$, then $p^{s}\cdot \chi \in C(p^{n},r)_{p^{s+t}}$. Moreover, observe that the following map is a bijection
\begin{align}
  C(p^{n},r)_{p^{t}} &\to C\left(p^{n-t},r\right)_{1}, \nonumber \\
  \chi &\mapsto \left\{\frac{a}{p^{t}} \colon a \in \chi\right\}. \label{eq:cycl_cosets_bijection} 
\end{align}

The multiplicative subgroup of invertible elements in $\Z_{p^{n}}$ is cyclic of order $(p-1)p^{n-1}$, therefore $f_{p^n,r} \leq (p-1)p^{n-1}$.
\begin{proposition}\label{prop:maximal_multiplicative_order}
  If $f_{p^n,r} = (p-1)p^{n-1}$, then for any $0 \leq s \leq n-1$ the set $C(p^{n},r)_{p^{s}}$ consists of a single element.
\end{proposition}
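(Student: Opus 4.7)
The plan is to interpret $C(p^n,r)_{p^s}$ as the set of orbits, under multiplication by $r$, on the subset
\[A_s = \{a \in \Z_{p^n} : \gcd(a, p^n) = p^s\} \subset \Z_{p^n},\]
and then show that, under the maximality hypothesis, a single orbit already exhausts $A_s$.

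First I would count $A_s$. Every $a \in A_s$ can be written uniquely as $a = p^s u$ with $u \in \{1,\dots,p^{n-s}-1\}$ and $\gcd(u,p)=1$, so $|A_s| = \varphi(p^{n-s}) = (p-1)p^{n-s-1}$ for $0 \leq s \leq n-1$.

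Next, I would compute the orbit size of any $a = p^s u \in A_s$. The orbit is $\{r^k a \bmod p^n : k \geq 0\}$ and its size is the smallest $k \geq 1$ with $(r^k-1)a \equiv 0 \pmod{p^n}$, i.e.\ with $r^k \equiv 1 \pmod{p^{n-s}}$ (since $u$ is a unit mod $p^{n-s}$). Hence every orbit in $A_s$ has size exactly $f_{p^{n-s},r}$.

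The crux is to show that the maximality $f_{p^n,r} = (p-1)p^{n-1}$ forces $f_{p^{n-s},r} = (p-1)p^{n-s-1}$. The natural reduction homomorphism $\pi : (\Z/p^n)^{\ast} \to (\Z/p^{n-s})^{\ast}$ is surjective; the hypothesis says $r$ generates the cyclic source group, so $\pi(r)$ generates the target. Thus the order of $r$ modulo $p^{n-s}$ equals $|(\Z/p^{n-s})^{\ast}| = (p-1)p^{n-s-1}$, which is exactly $|A_s|$. Consequently each orbit has size $|A_s|$, so there is only one orbit and $|C(p^n,r)_{p^s}| = 1$.

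I do not expect any serious obstacle; the only point that deserves a sentence of justification is that a generator of a finite cyclic group maps, under a surjective homomorphism onto another (necessarily cyclic) group, to a generator — which follows immediately from the fact that the image of a generator generates the image of the map.
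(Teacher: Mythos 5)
Your proof is correct and follows essentially the same route as the paper: the paper reduces to $C(p^n,r)_1$ via the bijection \eqref{eq:cycl_cosets_bijection} and then observes that a generator of $(\Z/p^n)^{\ast}$ meets every residue coprime to $p$, while you argue directly on the set $A_s$ and make explicit the step the paper leaves tacit, namely that the order of $r$ modulo $p^{n-s}$ is still maximal because the reduction $(\Z/p^n)^{\ast}\twoheadrightarrow(\Z/p^{n-s})^{\ast}$ carries a generator to a generator. Your version spells out the orbit-size computation and the counting $\lvert A_s\rvert=\varphi(p^{n-s})$, which amounts to the same content as the bijection argument.
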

\begin{proof}
By \eqref{eq:cycl_cosets_bijection} it is enough to prove the result only for $C(p^{n},r)_{1}$.

  Notice that if the assumption of proposition is satisfied, then $r$ generates multiplicative subgroup of invertible elements of $\Z_{p^n}$, hence if $k$ is relatively prime to $p^n$ satisfies
  \[k \equiv r^l \pmod{p^n}\]
  for some $l$. This concludes the proof.
\end{proof}
\begin{corollary}\label{cor:maximal_multiplicative_order}
  If $f_{p^n,r} = (p-1)p^{n-1}$, then
  \[V_{\chi} = \F_{r}(\xi_{p^s})\]
  provided that $\chi \in C(p^n,r)_{p^{n-s}}$.
\end{corollary}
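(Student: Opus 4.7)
The plan is to show that under the hypothesis $f_{p^n,r} = (p-1)p^{n-1}$ the factor $f_\chi(X)$ corresponding to the unique $\chi \in C(p^n,r)_{p^{n-s}}$ is precisely the cyclotomic polynomial $\Phi_{p^s}(X)$; once this is in place, the identification $V_\chi = \F_r(\xi_{p^s})$ is immediate from the definitions.

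First I would pin down the coset $\chi$. By Proposition~\ref{prop:maximal_multiplicative_order} the hypothesis on $r$ forces $C(p^n,r)_{p^t}$ to be a singleton for every $0 \le t \le n-1$. The edge case $t = n$ is trivial since $C(p^n,r)_{p^n} = \{\{0\}\}$ by definition. Hence for every $0 \le s \le n$ the set $C(p^n,r)_{p^{n-s}}$ contains exactly one coset, so the element $\chi$ appearing in the statement is well-defined.

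Second, I would invoke Proposition~\ref{prop:cyclotomic-pol-in-finite-fields} with $d = p^{n-s}$: it gives the factorization
\[
\Phi_{p^s}(X) \;=\; \prod_{\chi' \in C(p^n,r)_{p^{n-s}}} f_{\chi'}(X).
\]
Since the index set is a singleton $\{\chi\}$, the product collapses to $f_\chi(X) = \Phi_{p^s}(X)$.

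Finally, Definition~\ref{def:cyclotomic-cosets-modules} gives $V_\chi = \F_r[X]/(f_\chi(X))$, and Proposition~\ref{prop:representations-finite-characteristic} defines $\F_r[\xi_{p^s}] = \F_r[X]/(\Phi_{p^s}(X))$ (the notation $\F_r(\xi_{p^s})$ in the statement refers to the same quotient ring, which is indeed a field because $\Phi_{p^s}$ is now known to be irreducible in $\F_r[X]$ as the unique factor). Substituting $f_\chi = \Phi_{p^s}$ yields $V_\chi = \F_r(\xi_{p^s})$. There is no real obstacle; the entire argument is a bookkeeping exercise that strings together Propositions~\ref{prop:maximal_multiplicative_order} and~\ref{prop:cyclotomic-pol-in-finite-fields} with the defining quotient description of $V_\chi$, the only care needed being to handle the edge value $s = n$ (equivalently $t = 0$) separately from Proposition~\ref{prop:maximal_multiplicative_order}.
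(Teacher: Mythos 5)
Your proof is correct and follows exactly the reasoning the paper leaves implicit (the corollary is stated without proof): Proposition~\ref{prop:maximal_multiplicative_order} makes $C(p^n,r)_{p^{n-s}}$ a singleton, so the product in Proposition~\ref{prop:cyclotomic-pol-in-finite-fields} collapses to $f_\chi = \Phi_{p^s}$, and the defining quotient $V_\chi = \F_r[X]/(f_\chi)$ is then $\F_r[X]/(\Phi_{p^s}) = \F_r(\xi_{p^s})$. Your care in handling the degenerate $s=0$ case and in noting that irreducibility of $\Phi_{p^s}$ over $\F_r$ is what justifies calling the quotient a field is a nice touch, but the approach is the same.
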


To conclude this section we will study the behavior of the restriction functor $\res{\Z_{p^{n}}}{\Z_{p^{s}}}$, for $s < n$, in finite characteristic. As a quick reminder (for the much more detailed account of restriction functors see for example \cite{CurtisReiner}) restriction functor assigns to any $\Lambda_{p^{n}}$-module $M$ a $\Lambda_{p^s}$-module $\res{\Z_{p^{n}}}{\Z_{p^{s}}} M$ by restricting the action of $\Lambda_{p^n}$ to the subalgebra $\Lambda_{p^s} \subset \Lambda_{p^n}$. The following proposition is used in the proof of Theorem~\ref{thm:skein-relation-finite-char}.

\begin{lemma}\label{lemma:restrictions}
  Let $\chi \in C(p^{n},r)_{p^t}$, then
  \[\res{\Z_{p^{n}}}{\Z_{p^{s}}} V_{\chi} =
    \begin{cases}
      \F_{r}^{\dim_{\F_{r}}V_{\chi}}, s \leq t, \\
      V_{p^{n-s} \cdot \chi}^{\alpha}, s > t,
    \end{cases}
  \]
  where $\alpha = \frac{\dim_{\F_{r}}V_{\chi}}{\dim_{\F_{r}}V_{p^{n-s}\cdot \chi}}$. Moreover, we implicitly treat $p^{n-s} \cdot \chi$ to be a cyclotomic coset in $C(p^{s},r)_{p^{t}}$ yielding a representation of $\Z_{p^{s}}$.
\end{lemma}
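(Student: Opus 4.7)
The plan is to diagonalize the action of a generator of $\Z_{p^s} \subset \Z_{p^n}$ on $V_\chi$ after base change to the algebraic closure $\overline{\F_r}$, and then descend to $\F_r$ using semisimplicity of $\F_r[\Z_{p^s}]$ (Proposition~\ref{prop:maschke-thm}) and the classification of irreducibles in Proposition~\ref{prop:representations-finite-characteristic}. First, fix a generator $t$ of $\Z_{p^n}$ and identify $\F_r[\Z_{p^n}] \cong \F_r[X]/(X^{p^n}-1)$ so that $t$ corresponds to $X$. Then $V_\chi = \F_r[X]/(f_\chi(X))$. Since $\gcd(r, p^n) = 1$, the polynomial $f_\chi$ is separable, so base change gives
\[V_\chi \otimes_{\F_r} \overline{\F_r} \cong \bigoplus_{u \in \chi} L_u,\]
where $t$ acts on the line $L_u$ as multiplication by $\xi_{p^n}^u$. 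The subgroup $\Z_{p^s} \subset \Z_{p^n}$ is generated by $t^{p^{n-s}}$, which therefore acts on $L_u$ as multiplication by $\xi_{p^n}^{u p^{n-s}} = \xi_{p^s}^u$, under the convention $\xi_{p^s} := \xi_{p^n}^{p^{n-s}}$.

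In the case $s \leq t$, every $u \in \chi$ is divisible by $p^t$, hence by $p^s$, so $\xi_{p^s}^u = 1$ for all $u \in \chi$. Thus $t^{p^{n-s}}$ acts as the identity on $V_\chi \otimes \overline{\F_r}$, and by faithful flatness on $V_\chi$ itself, giving the trivial $\F_r[\Z_{p^s}]$-module of rank $\dim_{\F_r} V_\chi$, as claimed.

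In the case $s > t$, the eigenvalues of $t^{p^{n-s}}$ form the multiset $\{\xi_{p^s}^u : u \in \chi\}$. Under the natural identification of the subgroup $p^{n-s}\Z_{p^n} \subset \Z_{p^n}$ with $\Z_{p^s}$ via $p^{n-s}k \leftrightarrow k$, the set $p^{n-s} \cdot \chi$ corresponds precisely to the reduction $\{u \bmod p^s : u \in \chi\}$, which is a single cyclotomic coset in $\Z_{p^s}$ because the reduction $\Z_{p^n} \to \Z_{p^s}$ commutes with multiplication by $r$. By semisimplicity, the restriction decomposes as a direct sum of irreducibles $V_{\chi'}$; by Schur's lemma (Lemma~\ref{prop:schur-lemma}) and the eigenvalue analysis above, the only $\chi'$ that can contribute is $p^{n-s} \cdot \chi$, and the multiplicity $\alpha$ is then forced by $\alpha \cdot \dim_{\F_r} V_{p^{n-s} \cdot \chi} = \dim_{\F_r} V_\chi$, using Lemma~\ref{obs:dim_irreducible_fin_char}.

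The only real subtlety, I expect, is verifying that the set-theoretic image $\chi \bmod p^s$ in $\Z_{p^s}$ coincides, under the canonical identification of $p^{n-s}\Z_{p^n}$ with $\Z_{p^s}$, with the coset $p^{n-s} \cdot \chi$ as used in the statement; once this dictionary is in place the rest of the argument is routine eigenvalue bookkeeping combined with the semisimple decomposition.
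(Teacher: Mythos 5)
Your proof is correct and follows essentially the same route as the paper's: the paper's proof also reduces to analyzing the action of $t^{p^{n-s}}$ on $V_\chi$ and invokes Propositions~\ref{prop:cyclotomic-pol-in-finite-fields} and~\ref{prop:representations-finite-characteristic}, which is exactly the eigenvalue bookkeeping over $\overline{\F_r}$ that you spell out. Your version is just more explicit, in particular about the dictionary between $p^{n-s}\cdot\chi \subset \Z_{p^n}$ and its image in $C(p^s,r)_{p^t}$, which the paper leaves implicit.
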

\begin{proof}
  Notice that $\Z_{p^{s}} \subset \Z_{p^n}$ is generated by $t^{p^{n-s}}$, where $t$ denotes a fixed generator of $\Z_{p^{n}}$. Therefore, in order to determine the restriction of $V_{\chi}$ to $\Z_{p^{s}}$ we need to analyze the action of $t^{p^{n-s}}$ on this representation. From Propositions \ref{prop:cyclotomic-pol-in-finite-fields} and \ref{prop:representations-finite-characteristic} it is easy to verify that the desired formula holds (compare it with calculations in \cite{Politarczyk-Jones}).
\end{proof}

\bibliographystyle{natplain}
\bibliography{biblio2}
\end{document}